\documentclass[12pt]{article}

\usepackage
[
a4paper,
margin=3cm
]
{geometry}
\usepackage[T1]{fontenc}
\usepackage{scrextend}
\usepackage{multicol}

\usepackage{amsmath,amssymb,enumerate,amsbsy,amsthm,parskip,xcolor,enumitem,cancel,tikz,tikz-cd,graphicx,adjustbox,fancyhdr,comment}
\usepackage{appendix,setspace}
\usepackage[hidelinks]{hyperref}

\renewcommand{\sectionmark}[1]{}

\newcommand{\mbr}{\mathbb{R}}
\newcommand{\mbz}{\mathbb{Z}}
\newcommand{\mbc}{\mathbb{C}}
\newcommand{\mbcp}{\mathbb{CP}}
\newcommand{\mbh}{\mathbb{H}}

\DeclareMathOperator{\Ad}{\mathrm{Ad}}
\newcommand{\mca}{\mathcal{A}}
\newcommand{\mfh}{\mathfrak{h}}
\newcommand{\mfg}{\mathfrak{g}}
\newcommand{\mfk}{\mathfrak{k}}
\newcommand{\mfm}{\mathfrak{m}}
\newcommand{\mrd}{\mathrm{d}}
\newcommand{\del}{\partial}

\newcommand{\mr}[1]{\mathrm{#1}}
\newcommand{\on}[1]{\operatorname{#1}}

\setlist[enumerate,1]{label=(\arabic*)}

\renewenvironment{align*}
{\[
	\begin{aligned}}
	{\end{aligned}
\]}

\theoremstyle{definition}
\newtheorem{theorem}{Theorem}[section]
\newtheorem{proposition}[theorem]{Proposition}
\newtheorem{lemma}[theorem]{Lemma}
\newtheorem{corollary}[theorem]{Corollary}
\newtheorem{definition}[theorem]{Definition}
\newtheorem{example}[theorem]{Example}

\newtheorem{remark}[theorem]{Remark}

\title{\singlespacing\textbf{Invariant Complex Structures \\
		and Kähler Metrics on \\
		Homogeneous Principal Bundles}}
\date{}
\author{Eric Fung Ya Ho}

\begin{document}
	\maketitle
	\begin{abstract}
		
		This work investigates the existence and classification of almost complex structures and Kähler metrics on principal bundles, with particular emphasis on the homogeneous setting. Using Wang's theory of invariant connections, we give a direct geometric proof of the classification of invariant integrable complex structures established by Biswas and Upmeier \cite{biswas}. This approach avoids the algebraic machinery of Jordan triple systems and extends the classification from Hermitian symmetric spaces to general symmetric spaces admitting invariant complex structures. We also present a computationally direct proof of Johnson's Kähler criterion \cite{johnson1980kahler} using the Levi-Civita connection. Finally, we apply these classification results to the reduced frame bundles of the upper half-plane and complex projective spaces, showing that the invariant integrable complex structures in these cases are unique.
		
		\vspace{2ex}
		\noindent {\bf Keywords}: Principal bundles, homogeneous spaces, almost complex structures, K\"ahler metrics.
	\end{abstract}
	\tableofcontents
	\newpage

	\section{Introduction}
	
	\subsection*{Background and Motivation}
	In differential geometry, an almost complex structure $J$ on a smooth manifold is an endomorphism on the tangent bundle such that $J^2=-\on{id}$. Such structures generalise the notion of complex manifolds, allowing mathematicians to study manifolds that are not complex in the integrable sense, but still exhibit complex-like behaviour at the infinitesimal level. On the other hand, a principal $G$-bundle provides the natural geometric framework for describing connections, curvatures, and gauge symmetries. The interaction between the theory of almost complex structures and the study of principal $G$-bundles leads to interesting questions: under what conditions can the total space of a principal bundle carry an almost complex structure, and how do the choices of connections affect the nature of such structures?
	
	The study of almost complex structures has been considered from various perspectives. In 1978, in their seminal work \cite{atiyah1978self}, Atiyah, Hitchin, and Singer showed that an oriented Riemannian 4-manifold is self-dual if and only if the almost complex structure on its associated twistor space, induced by the Riemannian metric, is integrable. Since twistor spaces arise as total spaces of certain principal bundles, this result motivated the study of almost complex structures on principal bundles. In this context, in the paper \cite{biswas}, Biswas and Harald classified invariant and integrable almost complex structures on homogeneous principal bundles in terms of Lie-algebraic data. Independently, Johnson formulated a condition for a principal bundle to be K\"ahler \cite{johnson1980kahler}.
	
	\subsection*{Summary of Work}
	This work investigates the existence and classification of almost complex structures and K\"ahler metrics on the total space of a principal bundle. Special attention is given to homogeneous principal bundles, where the global symmetry allows for a Lie-theoretic description of these structures. A central part of our investigation concerns invariant connections, almost complex structures, and the K\"ahler metrics they induce on a homogeneous principal bundle. In this setting, the symmetry enables a precise classification of these structures in terms of Lie-algebraic data.
	
	Our contributions are as follows:
	
	\begin{itemize} 
		\item \textbf{Alternative Proof for Integrability in the Symmetric Case:} We provide an alternative proof of the classification of invariant integrable complex structures established in \cite{biswas} (Theorem \ref{classification of integrable invariant complexions}). While the original result for Hermitian symmetric spaces relies heavily on Jordan triple systems, we provide a direct, alternative proof using Wang's description of invariant connections. Moreover, we relax the geometric assumption from Hermitian symmetric spaces to general symmetric spaces with invariant complex structures.
		\item \textbf{K\"ahler Metrics on Principal Bundles:} We revisit Johnson's criterion for a principal bundle to be K\"ahler. Originally proved using the geometry of totally geodesic fibres, we provide an alternative proof using a direct computation based on the Levi-Civita connection (Theorem \ref{kahler iff flat}). \item \textbf{Classification in the Homogeneous Setting:}  We apply Wang's classification theorem to explicitly classify K\"ahler metrics of certain forms on homogeneous principal bundles (Theorem \ref{classification of kahler metrics (Gx_lambda H)}).
		\item \textbf{Applications:} We apply the classification theorems to classify invariant connections and invariant integrable complexions on the reduced frame bundles of the upper half plane and complex projective spaces. In particular, we show that they are unique (Propositions \ref{unique complexion on reduced frame bundle of H} and \ref{unique complexion on reduced frame bundle of CP^n}).
	\end{itemize}
	
	\subsection*{Organization of the Work}
	The structure of the paper is as follows:
	\begin{itemize}
		\item \textbf{Section 2} presents the necessary geometric preliminaries on principal $G$-bundles, connections, and curvature two-forms. The basic knowledge about almost complex geometry is also discussed. 
		\item \textbf{Section 3} discusses the constructions of almost complex structures on principal bundles induced by choices of connections.
		\item \textbf{Section 4} introduces homogeneous bundles and classifies invariant connections, as well as invariant and integrable almost complex structures, in terms of linear maps between Lie algebras.
		\item \textbf{Section 5} develops the foundational theory of Hermitian and K\"ahler metrics on principal bundles. In the homogeneous setting, we provide the classification of K\"ahler metrics of a specific form.
		\item \textbf{Section 6} provides concrete geometric applications of the main theorems, specifically analysing the reduced frame bundles of the upper half-plane and complex projective spaces.
	\end{itemize}
	
	\subsection*{Notational Conventions}
	We fix the notational conventions adopted throughout the work. For a smooth manifold $M$, the space of sections of differential $k$-forms on a smooth manifold $M$ is denoted $\Omega^k(M)$. Given a vector space $V$, we denote by $\Omega^k(M;V)$ the space of sections of $V$-valued differential $k$-forms on $M$. If $E\to M$ is a vector bundle over $M$, the space of sections of $E$ is denoted $\Gamma(E)$ and the space of sections of $E$-valued differential $k$-forms is denoted by $\Omega^k(M;E)$.  The symbols $G$ and $H$ are reserved for Lie groups, with $H$ typically denoting a complex Lie group. The adjoint representation of a Lie group $G$ on its Lie algebra $\mfg$ is denoted $\Ad$, while the corresponding representation of $\mfg$ on itself is denoted by $\on{ad}$.
	
	\subsection*{Acknowledgements} The author is deeply grateful to his master thesis supervisor, Dr. Arash Bazdar, for his invaluable guidance, continuous support, and insightful mathematical discussions throughout the development of this work.
	
	\newpage
	
	\section{Preliminaries}
	In this section, we introduce the notions of princiapl $G$-bundles, connections on them, and tensorial forms. These are well-known and may be found in standard references such as \cite{loringtudifferentialgeometry} and \cite{kobayashi1}. All the manifolds and maps are assumed to be smooth unless explicitly stated otherwise.
	
	\subsection{Principal $G$-Bundles}
	Let $\pi:P\to M$ be a principal $G$-bundle. The vertical subbundle $VP$ of $T P$ is defined to be the kernel of $\pi:TP\to TM$.
	Given $p\in P$, it follows that the vertical subspace $V_p P$ consists of all fundamental vectors at $p$:
	\[V_p P=\{\xi_p^\#:\xi\in \mfg\},\]
	where $\xi_p^\#:=\displaystyle\frac{d}{dt}\Big|_{t=0} p\exp(t\xi).$
	
	A \emph{connection} on $P$ is a smooth choice $p\to A_p$ of subspaces $A_p$ of $T_p P$ such that for all $p\in P$, \[T_p P=V_p P\oplus A_p,\] 
	and for all $g\in G,p \in P$, $R_{g*}(A_p)=A_{pg}$, where $R_g: P\to P$ is the map sending $p$ to $pg$. We call $A_p$ the horizontal subspaces. Equivalently, a connection is a Lie-algebra valued one-form  $\theta:TP\to \mfg$ on $P$ such that
	\begin{enumerate}
		\item $\theta$ is $G$-equivariant. This means that for all $g\in G$, $R_g^*\theta=\Ad_{g^{-1}}\theta$.
		\item For all $p\in P$ and $\xi\in \mfg$ ,  $\theta_p(\xi_p^\#)=\xi$.
	\end{enumerate}
	The horizontal distribution $A$ is defined by $A:=\displaystyle\bigsqcup_{p\in P} A_p$. Elements of $A_p$ are called horizontal vectors at $p$ and sections of $A$ are called horizontal vector fields.
	Since a horizontal subbundle is complementary to the vertical subbundle, it follows that for any vector field $X$ on $M$, there exists a unique horizontal vector field $\tilde{X}\in \Gamma(A)$, such that $\pi_*(\tilde{X})=X$. We call $\tilde{X}$ the horizontal lift of $X$.
	
	The notion of \emph{tensorial forms} is central in our study.
	
	\begin{definition}[Tensorial forms]
		Let $P$ be a principal $G$-bundle over $M$ and $\rho:G\to \mathrm{GL}(V)$ be a representation of $G$ on a vector space $V$. A $V$-valued $k$-form is called a tensorial form of type $\rho$ if it is right-equivariant of type $\rho$ and it is horizontal. The first condition means that for all $g\in G$, 
		\[R_g^*\Phi=\rho(g^{-1})\Phi,\]
		while the second condition is satisfied if whenever one of $X_1,\dots, X_k$ is vertical, then 
		\[\Phi(X_1,\dots,X_k)=0.\]
		
		The space of all (smooth) tensorial $V$-valued $k$-forms of type $\rho$ is denoted by $\Omega_\rho^k(P,V)$.
	\end{definition}
	
	Tensorial forms have close relation with connections.
	
	\begin{proposition}[$\mca(P)$ as an Affine Space Modeled on $\Omega_{\Ad}^1(P,\mathfrak{g})$]~\\
		The space of connections on a principal bundle $P$, $\mca(P)$ is an affine space modeled on the space of all $\Ad$-tensorial one forms on $P$, $\Omega_{\mathrm{Ad}}^{1}(P,\mathfrak{g})$. More precisely, if $\theta$ is a connection on $P$ and $\Phi$ is a tensorial one-form, then $\theta+\Phi$ is a connection on $P$. Conversely, by fixing $\theta\in \mathcal{A}(P)$, every connection is of the form $\theta+\Phi$ for some tensorial one-form $\Phi$.
		\label{space of connections as affine space}
	\end{proposition}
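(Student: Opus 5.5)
The argument checks the two defining properties of a connection form directly, using only that both properties are linear in the form. Both directions reduce to the observation that the difference of two connection forms kills vertical vectors and transforms under $\Ad$.

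\textbf{First direction.} Let $\theta\in\mca(P)$ and let $\Phi\in\Omega^1_{\Ad}(P,\mfg)$. I will show that $\theta+\Phi$ satisfies conditions (1) and (2) of the definition of a connection.

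For equivariance, pullback by $R_g$ is linear and $\Ad_{g^{-1}}$ is a linear map on $\mfg$. Hence
\[R_g^*(\theta+\Phi)=R_g^*\theta+R_g^*\Phi=\Ad_{g^{-1}}\theta+\Ad_{g^{-1}}\Phi=\Ad_{g^{-1}}(\theta+\Phi).\]
For the reproduction of fundamental vectors, each $\xi_p^\#$ is vertical, so horizontality of $\Phi$ gives $\Phi_p(\xi_p^\#)=0$. Therefore $(\theta+\Phi)_p(\xi_p^\#)=\xi+0=\xi$. Smoothness is clear, since a sum of smooth forms is smooth.

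\textbf{Converse.} Fix $\theta\in\mca(P)$ and let $\theta'$ be any connection form. Set $\Phi:=\theta'-\theta$, so that $\theta'=\theta+\Phi$. It remains to show that $\Phi$ is a tensorial one-form of type $\Ad$.

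Equivariance follows by the same linearity computation:
\[R_g^*\Phi=\Ad_{g^{-1}}\theta'-\Ad_{g^{-1}}\theta=\Ad_{g^{-1}}\Phi.\]
For horizontality, recall that $V_pP=\{\xi_p^\#:\xi\in\mfg\}$. So any vertical vector at $p$ has the form $\xi_p^\#$, and
\[\Phi_p(\xi_p^\#)=\xi-\xi=0.\]

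\textbf{Affine structure.} The two directions together show that the map $\Phi\mapsto\theta+\Phi$ is a bijection from $\Omega^1_{\Ad}(P,\mfg)$ onto $\mca(P)$. The map $(\theta,\Phi)\mapsto\theta+\Phi$ is a free and transitive action of the vector space $\Omega^1_{\Ad}(P,\mfg)$ on $\mca(P)$: freeness holds because $\theta+\Phi=\theta$ forces $\Phi=0$, and transitivity is the converse just proved. This action is precisely the statement that $\mca(P)$ is an affine space modeled on $\Omega^1_{\Ad}(P,\mfg)$.

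\textbf{Main subtlety.} The argument works with connection one-forms rather than horizontal distributions, using the stated equivalence between the two. The only step that needs care is horizontality of the difference $\theta'-\theta$, and that relies on the identification $V_pP=\{\xi_p^\#:\xi\in\mfg\}$ recalled above. Nonemptiness of $\mca(P)$, which is usually shown with a partition of unity, is not needed: the statement assumes a connection $\theta$ is fixed.
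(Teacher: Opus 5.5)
Your proof is correct. The paper states this proposition without proof as a standard preliminary, deferring to Tu and Kobayashi--Nomizu, and your argument is exactly the standard one: equivariance and the condition $\theta(\xi^\#)=\xi$ are preserved when you add a horizontal $\Ad$-equivariant form, and horizontality of $\theta'-\theta$ follows from $V_pP=\{\xi_p^\#:\xi\in\mathfrak{g}\}$.
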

	
	\begin{definition}[Covariant Derivative on Tensorial Forms]
		Let $P\to M$ be a principal $G$-bundle and $A$ a connection on $P$. Let $\rho:G\to \mathrm{GL}(V)$ be a representation of $G$ on a vector space $V$. The \emph{covariant derivative} $D^A\Phi$ of a tensorial $k$-form $\Phi\in \Omega_\rho^k(P;V)$ is defined to be
		\[D^A\Phi:=(\mrd\Phi)^h,\]
		where $\mrd\Phi$ is the exterior derivative on vector-valued forms, and 
		\[
		(\mrd\Phi)^h(X_1,\dots,X_{k+1}):=\mrd\Phi(hX_1,\dots,hX_{k+1}),
		\] where $X_i\in \Gamma(TP)$ and $hX_i$ is the horizontal component of $X_i$ with respect to the connection $A$.
	\end{definition}
	
	Finally, we introduce the curvature two-form of a connection.
	\begin{definition}[Curvature Two-Form]
		Let $A$ be a connection on a principal $G$-bundle $P$ over $M$ and $\theta: TP\to \mfg$ be the corresponding connection one-form. The curvature two-form of the connection $A$, denoted by $F_A$, is defined by
		\[F_A:=D^A\theta=(\mrd\theta)^h,\]
		where $(\mrd\theta)^h(X,Y):=\mrd\theta(hX,hY)$ for $X,Y\in \Gamma(TP)$.
	\end{definition}
	
	\begin{proposition}
		The curvature two-form $F_A$ of a connection $\theta$ on $P$ is a tensorial two-form of type $\Ad$. Moreover, it is given by the equation
		\[F_A=\mrd \theta+\frac{1}{2}[\theta,\theta],\]
		where $[\theta,\theta]$ is the $\mfg$-valued two-form on $P$ defined by
		\[[\theta,\theta](X,Y)=2[\theta(X),\theta(Y)], \quad X,Y\in \Gamma(TP).\]
		\label{curvature 2 form formula}
	\end{proposition}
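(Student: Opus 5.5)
We will prove the two assertions in order: first that $F_A$ is tensorial of type $\Ad$, and then the structure equation. The structure equation is proved by evaluating both sides on vector fields from a convenient spanning family.

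For tensoriality, horizontality is immediate from the definition $F_A(X,Y)=\mrd\theta(hX,hY)$: if $X$ is vertical then $hX=0$. For equivariance we first note two facts. The horizontal projection commutes with right translation, $h\circ R_{g*}=R_{g*}\circ h$, because $R_{g*}$ preserves both $A$ and $VP$. Moreover $\mrd$ commutes with pullback. Hence
\[
R_g^*F_A(X,Y)=\mrd\theta(hR_{g*}X,hR_{g*}Y)=(R_g^*\mrd\theta)(hX,hY)=\mrd(R_g^*\theta)(hX,hY).
\]
Since $R_g^*\theta=\Ad_{g^{-1}}\theta$ and $\Ad_{g^{-1}}$ is a constant linear map, this equals $\Ad_{g^{-1}}F_A(X,Y)$.

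For the formula, both sides are tensorial in $X$ and $Y$ (i.e.\ $C^\infty(P)$-bilinear and pointwise). It therefore suffices to check the identity pointwise on three cases.

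\begin{enumerate}
\item \emph{$X,Y$ both horizontal.} Here $\theta(X)=\theta(Y)=0$, so the right-hand side is $\mrd\theta(X,Y)$. This equals $\mrd\theta(hX,hY)=F_A(X,Y)$.

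\item \emph{$X=\xi^\#$ and $Y=\eta^\#$ both fundamental.} The left-hand side is $0$. For the right-hand side,
\[
\mrd\theta(\xi^\#,\eta^\#)=\xi^\#(\eta)-\eta^\#(\xi)-\theta([\xi^\#,\eta^\#]).
\]
The first two terms vanish because $\theta(\eta^\#)=\eta$ is constant. We then use $[\xi^\#,\eta^\#]=[\xi,\eta]^\#$, since $\xi\mapsto\xi^\#$ is a Lie algebra homomorphism for a right action. This gives $\mrd\theta(\xi^\#,\eta^\#)=-[\xi,\eta]$, while $\tfrac12[\theta,\theta](\xi^\#,\eta^\#)=[\xi,\eta]$. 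The sum is $0$.

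\item \emph{$X=\xi^\#$ and $Y$ horizontal.} We may extend $Y$ to a right-invariant horizontal field, for instance the horizontal lift of a vector field on $M$. The left-hand side and the bracket term both vanish, so we must show $\mrd\theta(\xi^\#,Y)=0$. Expanding,
\[
\mrd\theta(\xi^\#,Y)=\xi^\#(\theta(Y))-Y(\xi)-\theta([\xi^\#,Y]).
\]
The first two terms are $0$. For the last term, $[\xi^\#,Y]=\frac{d}{dt}\big|_{t=0}R_{\exp(-t\xi)*}Y$ up to sign, i.e.\ it is the Lie derivative of $Y$ along the flow $R_{\exp t\xi}$. Since $Y$ is invariant under this flow, $[\xi^\#,Y]=0$.
\end{enumerate}

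The main obstacle is the third case. It requires choosing the horizontal extension of $Y$ to be $R_g$-invariant, and correctly identifying the flow of $\xi^\#$ as $R_{\exp t\xi}$. Alternatively, this case can be handled using only that horizontal fields are preserved by $R_{g*}$: then $[\xi^\#,Y]$ is horizontal for any horizontal $Y$, and so $\theta([\xi^\#,Y])=0$ directly. I would take this second route, as it avoids sign bookkeeping.
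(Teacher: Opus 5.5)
Your proof is correct. The paper does not prove this proposition; it states it in the preliminaries and defers to the standard references (Tu; Kobayashi--Nomizu). Your argument is exactly the textbook proof found there: equivariance from $h\circ R_{g*}=R_{g*}\circ h$ and $R_g^*\mrd\theta=\mrd(\Ad_{g^{-1}}\theta)$, then a pointwise check on the horizontal/horizontal, vertical/vertical and mixed cases. This check uses the convention $\mrd\theta(X,Y)=X\theta(Y)-Y\theta(X)-\theta([X,Y])$, which is the convention under which the stated formula with $[\theta,\theta](X,Y)=2[\theta(X),\theta(Y)]$ holds.

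Your preferred treatment of the mixed case is the Kobayashi--Nomizu lemma that $[\xi^\#,Y]$ is horizontal for horizontal $Y$. It would be worth adding its one-line justification: $t\mapsto R_{\exp(-t\xi)*}Y_{p\exp(t\xi)}$ is a curve in the fixed subspace $A_p$, and its derivative at $t=0$ is $[\xi^\#,Y]_p$.
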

	
	\vspace{-1cm}
	\subsection{Almost Complex Structures}
	We review the notion of almost complex structures. The materials presented are standard, with the primary references being \cite{johnleecomplexmanifolds} and Chapter IX of \cite{kobayashi2}.
	
	We begin by discussing almost complex structures on any manifold. We first introduce complex structures on vector spaces, and then define almost complex structures on a manifold.

	\subsubsection{Complex Structures on Vector Spaces}
	Let $V$ be a real vector space of dimension $2n$. 
	
	\begin{definition}[Complex Structures on Vector Spaces]
		A complex structure $J:V\to V$ on a real vector space $V$ is a linear map such that $J^2=-\operatorname{id}$.
	\end{definition}
	
	Every complex vector space $V$ has a canonical complex structure, which is the map sending $v\in V$ to $iv\in V$. Note that since $(\det J)^2=(-1)^{\displaystyle\dim_\mbr V}$, the dimension of $V$ must be even. It is called a complex structure because it helps to form a complex vector space as the next proposition shows.
	
	\begin{proposition}
		Let $V$ be a real vector space of dimension $2n$ and $J$ a complex structure on $V$. Define the scalar multiplication of $v\in V$ by a complex number by $(a+bi)v:=av+bJv$. With the original addition and the scalar multiplication by complex numbers defined, the real vector space $V$ becomes a complex vector space of dimension $n$.
	\end{proposition}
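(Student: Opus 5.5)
The plan is to verify the complex vector space axioms directly, and then to produce an explicit complex basis to pin down the dimension. Addition is unchanged, so $(V,+)$ is still an abelian group, and all the work lies in the scalar multiplication $(a+bi)v:=av+bJv$.

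First I will check the module axioms. Distributivity over vector addition, $(a+bi)(v+w)=(a+bi)v+(a+bi)w$, follows from linearity of $J$. Distributivity over scalar addition follows from bilinearity of the defining formula in $(a,b)$. The unit axiom holds because $1\cdot v=v$. The restriction to real scalars ($b=0$) agrees with the original real structure. The only nontrivial axiom is compatibility of multiplication, $(z_1z_2)v=z_1(z_2v)$. Writing $z_1=a+bi$ and $z_2=c+di$, I expand
\[
z_1(z_2v)=(a+bJ)(cv+dJv)=acv+adJv+bcJv+bdJ^2v=(ac-bd)v+(ad+bc)Jv,
\]
using $J^2=-\on{id}$. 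This is exactly $(z_1z_2)v$, since $z_1z_2=(ac-bd)+(ad+bc)i$. This step is the one place where the hypothesis $J^2=-\on{id}$ is used.

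For the dimension, I will choose a maximal family $v_1,\dots,v_k$ with the property that $v_1,Jv_1,\dots,v_k,Jv_k$ are $\mbr$-linearly independent, and show that this family spans $V$ over $\mbr$. This is the main obstacle. Let $W$ be the real span of $v_1,Jv_1,\dots,v_k,Jv_k$, which is $J$-invariant. Suppose $W\neq V$, pick $w\notin W$, and suppose $\alpha w+\beta Jw\in W$ with $(\alpha,\beta)\neq(0,0)$. Applying $J$ shows that $-\beta w+\alpha Jw\in W$ as well. Combining the two relations gives $(\alpha^2+\beta^2)w\in W$, hence $w\in W$, a contradiction. So $v_1,Jv_1,\dots,v_k,Jv_k,w,Jw$ are independent, contradicting maximality. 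Therefore $W=V$ and $2k=2n$, so $k=n$.

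Finally, the real basis $\{v_j,Jv_j\}$ gives a complex basis $\{v_j\}$. Every vector has the form $\sum(a_jv_j+b_jJv_j)=\sum(a_j+b_ji)v_j$, so the $v_j$ span over $\mbc$. They are $\mbc$-independent because $\sum(a_j+b_ji)v_j=0$ forces all $a_j=b_j=0$ by the real independence of $\{v_j,Jv_j\}$. Hence $\dim_\mbc V=n$.
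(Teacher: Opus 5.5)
The paper gives no proof of this proposition; it is quoted as standard background from the cited references, so there is no argument of the paper's to compare yours with. Your proof is correct and complete. The axiom check is right, and you correctly identify compatibility of scalar multiplication, $(z_1z_2)v=z_1(z_2v)$, as the only place where $J^2=-\operatorname{id}$ is used. The dimension count is also sound. Two steps are left implicit, but both are immediate:
\begin{itemize}
\item a maximal family exists because $2k\le 2n$ bounds its size;
\item a nontrivial real relation among $v_1,Jv_1,\dots,v_k,Jv_k,w,Jw$ must have $(\alpha,\beta)\neq(0,0)$, since the first $2k$ vectors are independent.
\end{itemize}

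The usual textbook route differs in how it counts dimensions. It checks the axioms, then notes that $V$ is finite-dimensional over $\mathbb{C}$ because a real spanning set also spans over $\mathbb{C}$. It then observes that for any $\mathbb{C}$-basis $u_1,\dots,u_m$, the vectors $u_1,Ju_1,\dots,u_m,Ju_m$ form an $\mathbb{R}$-basis, so $2m=2n$. Your maximality argument instead builds a basis of this $J$-adapted form by hand, so it does not rely on the existence of a $\mathbb{C}$-basis.

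The key step can be phrased more conceptually using the structure you have just established. Since $W$ is $J$-invariant, it is a $\mathbb{C}$-subspace. So if $(\alpha+\beta i)w\in W$ with $\alpha+\beta i\neq 0$, then $w=(\alpha+\beta i)^{-1}\bigl((\alpha+\beta i)w\bigr)\in W$. Your computation producing $(\alpha^2+\beta^2)w$ is exactly multiplication by the conjugate $\alpha-\beta i$. Either version is fine.
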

	
	Aside from turning $V$ into a complex vector space using a complex structure, another way is known as  the complexification of $V$. Consider the real vector space $V\otimes_\mbr \mbc$ which has dimension $4n$. We complexify $V\otimes_\mbr \mbc$ as follows. Given an element $v\otimes z\in V\otimes_\mbr \mbc$ and $\lambda\in \mbc$, we define $\lambda\cdot (v\otimes z)=v\otimes(\lambda z)$. With the original addition and the $\mbc$-scalar multiplication defined above, we denote the resulting complex vector space by $V_\mbc$, called the \emph{complexification of $V$}. It has complex dimension $2n$, that is 
	\[\dim_\mbr V=\dim_\mbc V_\mbc.\]
	Moreover, $V^\mbc$ is isomorphic to $V+iV:=\{v+iw:v,w\in V\}$ as complex vector spaces. For convenience, we consider the later vector space in the following discussion.
	
	Every linear map $T:V\to V$ between a real vector space can be uniquely extended to a complex linear map $T_\mbc:V_\mbc\to V_\mbc$ by defining $T_\mbc(v+iw):=Tv+iTw$. When no confusion arises, we shall denote this extension also by $T$. Now suppose $J:V\to V$ is a complex structure on a real vector space $V$. The complex linear extension $J:V_\mbc\to V_\mbc$ still satisfies $J^2=-\operatorname{id}$. Therefore, the eigenvalues of $J:V_\mbc\to V_\mbc$ are $\pm i$. We denote the eigenspace of $J$ with eigenvalue $i$ by $V^{1,0}$ and the eigenspace with eigenvalue $-i$ by $V^{0,1}$. The next proposition characterises $V^{1,0}$ and $V^{0,1}$.
	
	\begin{proposition}
		Let $J: V\to V$ be a complex structure on a real vector space $V$. Then 
		\[V^{1,0}=\{v-iJv:v\in V\},\]
		and
		\[V^{0,1}=\{v+iJv: v\in V\}.\]
		Moreover, the dimension of $V^{1,0}$ is equal to $V^{0,1}$.
		\label{V^1,0 and V^0,1}
	\end{proposition}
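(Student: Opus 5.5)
The plan is to compute directly with the complex-linear extension of $J$ to $V_\mbc=V+iV$, first checking that each proposed set lies in the correct eigenspace, and then getting the reverse inclusions and the dimension statement from the eigenspace decomposition of $V_\mbc$.

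\textbf{Inclusion into the eigenspaces.} For $v\in V$ we compute
\[
J(v-iJv)=Jv-iJ^2v=Jv+iv=i(v-iJv),
\]
so $\{v-iJv:v\in V\}\subseteq V^{1,0}$. The same computation with the sign of $i$ reversed gives
\[
J(v+iJv)=Jv-iv=-i(v+iJv),
\]
so $\{v+iJv:v\in V\}\subseteq V^{0,1}$.

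\textbf{Reverse inclusions.} Let $z=a+ib\in V^{1,0}$ with $a,b\in V$. Expanding $Jz=iz$ gives $Ja+iJb=-b+ia$. Comparing real and imaginary parts, which is legitimate because $V_\mbc=V\oplus iV$ as real vector spaces, we get $b=-Ja$. Hence $z=a-iJa$, which has the required form. The same argument for $V^{0,1}$ gives $b=Ja$, so $z=a+iJa$.

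\textbf{Equality of dimensions.} I see two routes and would give either.
\begin{itemize}
\item The map $v\mapsto v-iJv$ is real-linear and injective, since its real part is $v$. It is also complex-linear when $V$ carries the complex structure induced by $J$, because it sends $Jv$ to $Jv+iv=i(v-iJv)$. So $V^{1,0}\cong (V,J)$ as complex vector spaces, and the previous proposition gives complex dimension $n$. The analogous map $v\mapsto v+iJv$ is conjugate-linear, which still forces complex dimension $n$.
\item Alternatively, complex conjugation $a+ib\mapsto a-ib$ on $V_\mbc$ commutes with $J$, because $J$ is real. It therefore swaps the $i$- and $-i$-eigenspaces, giving a conjugate-linear bijection $V^{1,0}\to V^{0,1}$. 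Such a bijection preserves complex dimension.
\end{itemize}
We can also note that $V^{1,0}\cap V^{0,1}=0$ and that every $z\in V_\mbc$ decomposes as $z=\tfrac12(z-iJz)+\tfrac12(z+iJz)$. Hence $V_\mbc=V^{1,0}\oplus V^{0,1}$, and each summand has complex dimension $n$.

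\textbf{Main obstacle.} There is no real obstacle here. The only care needed is to keep separate the two complex structures in play: multiplication by $i$ on $V_\mbc$, and $J$ extended complex-linearly. This matters when comparing real and imaginary parts in the reverse inclusion, and when stating the sense in which the isomorphisms in the dimension count are linear.
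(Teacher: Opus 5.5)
Your proof is correct and complete. The eigenvalue checks, the comparison of real and imaginary parts for the reverse inclusions, and either of your dimension arguments all go through. There is no argument in the paper to compare against, because the paper states this proposition without proof as standard background. Your closing decomposition $z=\tfrac12(z-iJz)+\tfrac12(z+iJz)$ is exactly the formula the paper uses right afterwards to assert $V_{\mathbb{C}}=V^{1,0}\oplus V^{0,1}$ and to define $\pi^{1,0}$ and $\pi^{0,1}$.
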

	
	Consequently, the vector space $V_\mbc$ decomposes as a direct sum of $V^{1,0}$ and $V^{0,1}$. Based on this decomposition, we define the projection maps.
	
	\begin{definition}
		Let $V$ be a vector space equipped with a complex structure $J$. Define the projection maps $\pi^{1,0}:V_\mbc\to V^{1,0}$ and $\pi^{0,1}:V_\mbc\to V^{0,1}$ by
		\[\pi^{1,0}(v):=\frac{1}{2}(v-iJv),\quad \pi^{0,1}(v):=\frac{1}{2}(v+iJV),\quad v\in V_\mbc.\]
		Note that every $v\in V_\mbc$ can be written as $v=\pi^{1,0}(v)+\pi^{0,1}(v)$.
	\end{definition}

	\subsubsection{Almost Complex Structures}
	In this section, we recall the definition of almost complex structures and study the structures they induce on the complexified tangent and the cotangent bundles of the manifold. 
	
	\begin{definition}[Almost Complex Structures]
		Let $M$ be a smooth manifold. An \emph{almost complex structure} $J:TM\to TM$ on $M$ is a (bundle) map such that $J^2=-\operatorname{id}$. In this case, we call $(M,J)$ an almost complex manifold.
	\end{definition}
	
	Since $J_x:T_xM\to T_xM$ is a complex structure on $T_x M$ for all $x\in M$, the preceding discussion about complex structures is applicable. We complexify each tangent space $T_x M$ and call the resulting collection of complexified tangent spaces the complexified tangent bundle of $M$, denoted by $T_\mbc M$. Note that it is now a smooth complex vector bundle (every fibre is a complex vector space). Using Proposition \ref{V^1,0 and V^0,1}, we get the following.
	
	\begin{proposition}
		Let $T^{1,0}M,T^{0,1}M$ be the vector subbundles of $T_\mbc M$ such that their fibres at $x\in M$ are $(T_xM)^{1,0}$ and $(T_xM)^{0,1}$ respectively. The following are true.
		\begin{enumerate}
			\item $T^{1,0}M=\{X-iJX:X\in \Gamma(TM)\}$ and $T^{0,1}M=\{X+iJX: X\in \Gamma(TM)\}$.
			\item The complexified tangent bundle $T_\mbc M$ can be decomposed into $T_\mbc M=T^{1,0}M\oplus T^{0,1}M$.
		\end{enumerate}
	\end{proposition}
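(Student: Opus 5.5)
The plan is to reduce both claims to the fibrewise statement of Proposition \ref{V^1,0 and V^0,1}, applied to $V=T_xM$ with the complex structure $J_x$ for each $x\in M$. The only genuinely bundle-theoretic points are that $T^{1,0}M$ and $T^{0,1}M$ are smooth subbundles, and that the fibrewise descriptions pass to sections.

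First I would show that $T^{1,0}M$ and $T^{0,1}M$ are smooth complex subbundles of $T_\mbc M$. Extend $J$ complex-linearly to an endomorphism of $T_\mbc M$. Then $\pi^{1,0}=\tfrac12(\on{id}-iJ)$ and $\pi^{0,1}=\tfrac12(\on{id}+iJ)$ are smooth bundle endomorphisms. Using $J^2=-\on{id}$, one checks that they are complementary idempotents:
\[
(\pi^{1,0})^2=\pi^{1,0},\qquad \pi^{1,0}\pi^{0,1}=0,\qquad \pi^{1,0}+\pi^{0,1}=\on{id}.
\]
Their images are fibrewise $(T_xM)^{1,0}$ and $(T_xM)^{0,1}$. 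By Proposition \ref{V^1,0 and V^0,1} these both have constant dimension $n=\tfrac12\dim M$, since $\dim_\mbc (T_xM)_\mbc=2n$ and the two eigenspaces have equal dimension. The image of a smooth constant-rank idempotent bundle map is a smooth subbundle, so both are smooth subbundles.

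For item (1), the inclusion ``$\supseteq$'' is immediate: for $X\in\Gamma(TM)$, the section $X-iJX$ is smooth, and $J(X-iJX)=JX+iX=i(X-iJX)$, so it lies in $T^{1,0}M$. For ``$\subseteq$'', take a smooth section $Z$ of $T^{1,0}M$ and write $Z=X+iY$ with $X,Y\in\Gamma(TM)$ its real and imaginary parts. The equation $JZ=iZ$ gives $JX+iJY=-Y+iX$, hence $Y=-JX$ and $Z=X-iJX$. The case of $T^{0,1}M$ is identical with $i$ replaced by $-i$, or can be obtained by complex conjugation. This is also the fibrewise content of Proposition \ref{V^1,0 and V^0,1}, now done with smooth sections.

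Item (2) follows fibrewise from the decomposition $V_\mbc=V^{1,0}\oplus V^{0,1}$ recorded after Proposition \ref{V^1,0 and V^0,1}. Explicitly, the smooth projections give $\on{id}=\pi^{1,0}+\pi^{0,1}$ as a decomposition into complementary smooth subbundles. The intersection is trivial because a vector $v$ with $Jv=iv$ and $Jv=-iv$ must satisfy $v=0$.

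The only step I expect to need care is the smoothness and constant rank of the subbundles. This is handled by realising them as images of the smooth projections $\pi^{1,0},\pi^{0,1}$ and using constant rank, rather than defining them only pointwise.
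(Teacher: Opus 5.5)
Your proposal is correct and takes the same route as the paper, which gives no written proof and simply states the proposition as the fibrewise consequence of Proposition \ref{V^1,0 and V^0,1} applied to $(T_xM,J_x)$. Your extra steps are details the paper leaves implicit: showing that $T^{1,0}M$ and $T^{0,1}M$ are smooth subbundles, as images of the constant-rank idempotents $\pi^{1,0},\pi^{0,1}$, and checking the description at the level of smooth sections.
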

	
	Denote by $\pi^{1,0}:T_\mbc M\to T^{1,0}M$ and $\pi^{0,1}:T_\mbc M\to T^{0,1}M$ the respective projection maps with respect to the decomposition $T_\mbc M=T^{1,0}M\oplus T^{0,1}M$. We can characterise $T^{1,0}M$ and $T^{0,1}M$ further if $M$ is a complex manifold. Before that, we define a complex manifold.
	
	\begin{definition}[Complex Manifolds]
		An $n$-dimensional complex manifold $M$ is a topological manifold equipped with a holomorphic atlas $\{(U_\alpha,\phi_\alpha:U_\alpha\to \mbc^n)\}_\alpha$, that is, an open covering $\{U_\alpha\}_\alpha$ of $M$ and local homeomorphisms $\{\phi_\alpha\}_\alpha$ such that if $U_{\alpha\beta}:=U_{\alpha}\cap U_{\beta}\neq \emptyset$, then $\phi_\alpha \circ \phi_{\beta}^{-1}:\phi_\beta(U_{\alpha\beta})\to \phi_\alpha(U_{\alpha\beta})$ is a biholomorphism. 
		
		Here, a biholomorphism is a bijective holomorphic map between open subsets of $\mbc^n$ such that the inverse is also holomorphic. A map $\phi_\alpha:U_\alpha \to \mbc^n$ in the holomorphic atlas is called a holomorphic chart.
	\end{definition}

	We proceed by introducing some common notations in complex geometry. Let $M$ be a complex manifold and $(z_1,\dots,z_n)$ a local holomorphic chart. By writing $z_j=x_j+iy_j$, we see that $M$ is a real manifold of dimension $2n$ with local coordinate chart $(x_1,\dots,x_n,y_1,\dots,y_n)$. For $1\leq j\leq n$, let
	
	\[\frac{\partial}{\partial z^j}:=\frac{1}{2}\left(\frac{\partial}{\partial x^j}-i\frac{\partial}{\partial y^j}\right),\quad\frac{\del}{\del \bar{z}^j}:=\frac{1}{2}\left(\frac{\del}{\del x^j}+i\frac{\del}{\del y^j}\right).\]

	\begin{definition}[Canonical Almost Complex Structure on a Complex Manifold]
		Let $M$ be a complex manifold and $(z_1,\dots,z_n)$ be a local holomorphic chart. The canonical almost complex structure $J:TM\to TM$ on $M$ is the map 
		\[J\left(\frac{\del}{\del x^j}\right)=\frac{\del}{\del y^j},\quad J\left(\frac{\del}{\del y^j}\right)=-\frac{\del}{\del x^j}, \]
		where $z^j=x^j+iy^j$ for $1\leq j\leq n$ and $x^j$ and $y^j$ are viewed as real coordinates of $M$.
		More precisely, if $\phi:U\subseteq M\to \phi(U)\subseteq \mbc^n$ is a local holomorphic chart, then the canonical almost complex structure $J$ is obtained from the following commutative diagram:
		\begin{center}
			\begin{tikzcd}
				TM|_U \arrow[r, "J", dashed] \arrow[d, "\phi_*"] & TM|_U \arrow[d, "\phi*"] \\
				\mathbb{R}^{2n} \arrow[r, "J_{\mathbb{C}^n}"]    & \mathbb{R}^{2n}         
			\end{tikzcd}
		\end{center} 
		where $J_{\mbc^n}$ is the canonical almost complex structure on $\mbr^{2n}$.
		\label{natural almost complex structures on complex manifolds}
	\end{definition}
	
	\begin{remark}~
		\begin{itemize}
			\item The definition of the canonical almost complex structure on a complex manifold is independent of the charts by the Cauchy-Riemann equations.
			
			\item Using the canonical almost complex structure on $M$, we see that
			\[\frac{\del}{\del z^j}=\pi^{1,0}\left(\frac{\del}{\del x^j}\right),\quad \frac{\del}{\del \bar{z}^j}=\pi^{0,1}\left(\frac{\del}{\del x^j}\right)\]
		\end{itemize}
	\end{remark}

	\begin{proposition}
		Let $M$ be a complex manifold and $(U,z^1,\dots,z^n)$ be a local holomorphic chart. Then with respect to the canonical almost complex structure $J$ on $M$, 
		\[\left\{\frac{\del}{\del z^j}\right\}_{j=1}^n \text { and } \left\{\frac{\del}{\del \bar{z}^j}\right\}_{j=1}^n\]
		are local frames for $T^{1,0}U$ and $T^{0,1}U$ respectively.
	\end{proposition}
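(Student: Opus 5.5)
The plan is to compute $J$ on the real coordinate fields directly from Definition \ref{natural almost complex structures on complex manifolds}, and then to use the description of $T^{1,0}$ and $T^{0,1}$ as the $\pm i$ eigenbundles of $J$.

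\textbf{Step 1: each $\partial/\partial z^j$ lies in $T^{1,0}U$.} I will extend $J$ complex-linearly to $T_\mbc M$ and use $J(\partial/\partial x^j)=\partial/\partial y^j$ and $J(\partial/\partial y^j)=-\partial/\partial x^j$. This gives
\[
J\frac{\partial}{\partial z^j}=\frac12\left(\frac{\partial}{\partial y^j}+i\frac{\partial}{\partial x^j}\right)=\frac{i}{2}\left(\frac{\partial}{\partial x^j}-i\frac{\partial}{\partial y^j}\right)=i\frac{\partial}{\partial z^j}.
\]
So $\partial/\partial z^j$ is a smooth section of the $i$-eigenbundle $T^{1,0}U$. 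Equivalently, $\partial/\partial z^j=\pi^{1,0}(\partial/\partial x^j)$, as in the preceding remark.

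\textbf{Step 2: each $\partial/\partial\bar z^j$ lies in $T^{0,1}U$.} The same computation shows $J(\partial/\partial\bar z^j)=-i\,\partial/\partial\bar z^j$. Alternatively, this follows from Step 1 by complex conjugation, since $J$ is real.

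\textbf{Step 3: the $2n$ fields span $T_\mbc U$.} At each point, $\{\partial/\partial x^j,\partial/\partial y^j\}$ is a real basis of $T_xM$, hence a complex basis of $(T_xM)_\mbc$. Inverting the definitions gives
\[
\frac{\partial}{\partial x^j}=\frac{\partial}{\partial z^j}+\frac{\partial}{\partial\bar z^j},\qquad \frac{\partial}{\partial y^j}=i\left(\frac{\partial}{\partial z^j}-\frac{\partial}{\partial\bar z^j}\right).
\]
Hence the $2n$ vectors $\{\partial/\partial z^j,\partial/\partial\bar z^j\}$ span $(T_xM)_\mbc$, which has complex dimension $2n$, so they are linearly independent.

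\textbf{Step 4: conclude.} By Step 1, the $n$ independent vectors $\partial/\partial z^j$ lie in $(T_xM)^{1,0}$. By Proposition \ref{V^1,0 and V^0,1}, $\dim(T_xM)^{1,0}=\dim(T_xM)^{0,1}$, and these add up to $2n$, so each has complex dimension $n$. Therefore the $\partial/\partial z^j$ form a basis of $(T_xM)^{1,0}$, and likewise the $\partial/\partial\bar z^j$ form a basis of $(T_xM)^{0,1}$. These fields are smooth on $U$ because their coefficients in the smooth frame $\partial/\partial x^j,\partial/\partial y^j$ are constant. So they are local frames.

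There is no real obstacle here. The only point needing care is that $J$ is computed in one fixed holomorphic chart; the preceding remark notes this is chart-independent, so the argument is purely pointwise linear algebra.
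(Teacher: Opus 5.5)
Your proof is correct and complete. The paper states this proposition without proof, as standard background from its cited references, and your argument is the standard one: the eigenvalue computation $J(\partial/\partial z^j)=i\,\partial/\partial z^j$ and its conjugate (equivalently, the paper's remark that $\partial/\partial z^j=\pi^{1,0}(\partial/\partial x^j)$), the change of basis from $\{\partial/\partial x^j,\partial/\partial y^j\}$, and the dimension count $\dim_{\mathbb{C}}(T_xM)^{1,0}=\dim_{\mathbb{C}}(T_xM)^{0,1}=n$.
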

	
	We now construct another important structure on differential forms induced by almost complex structures, which are called \emph{complex $(p,q)$-forms}.
	
	Suppose that $(M,J)$ is an almost complex manifold. The almost complex structure $J:TM\to TM$ induces an almost complex structure on $T^*M$. To be precise, define $J^*:T^*M\to T^*M$ by sending $\omega\in T^*M$ to $\omega\circ J\in T^*M$. We shall denote it also by $J$ if no confusion arises. Similarly, we complexify $T^*M$ and denote it by $T_\mbc^*M$, which will be called the complexified cotangent bundle of $M$.
	
	\begin{proposition}
		Let $(M,J)$ be an almost complex manifold.
		\begin{enumerate}
			\item With respect to $J^*$, the complexified cotangent bundle can be decomposed into
			\[T_\mbc^*M=T_{1,0}M\oplus T_{0,1}M,\]
			where $T_{1,0}M$ and $T_{0,1}M$ are the $i$- and $(-i)$-eigenspaces of $J^*$ respectively. 
			They are characterised by the following.
			\[T_{1,0}M=\{\omega\in \Gamma(T_\mbc^*M):\omega(Z)=0 \text{ for all $Z\in \Gamma(T^{0,1}M)$}\},\]
			and
			\[T_{0,1}M=\{\omega\in \Gamma(T_\mbc^*M):\omega(Z)=0 \text{ for all $Z\in \Gamma(T^{1,0}M)$}\}.\]
			\item Let $U$ be an open subset of $M$. If $\{\omega^1,\dots,\omega^n\}$ is a local frame for $T_{1,0}U$. Then $\{\bar{\omega}^1,\dots,\bar{\omega}^n\}$ is a local frame for $T_{0,1}U$.
		\end{enumerate}
		\label{T_{1,0} and T_{0,1}}
	\end{proposition}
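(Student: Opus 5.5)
The plan is to work fibrewise. At each point $x\in M$, $J^*_x$ is a complex structure on the real vector space $T_x^*M$, since $(J^*)^2\omega=\omega\circ J\circ J=-\omega$. Applying the vector-space theory to $V=T^*_xM$ with this complex structure gives the eigenspace decomposition $(T_x^*M)_\mbc=(T_x^*M)^{1,0}\oplus(T_x^*M)^{0,1}$, with the explicit descriptions $\{\omega-iJ^*\omega\}$ and $\{\omega+iJ^*\omega\}$ from Proposition \ref{V^1,0 and V^0,1}. The subbundles $T_{1,0}M$ and $T_{0,1}M$ are then defined fibrewise. They are smooth because they are the images of the smooth bundle maps $\frac12(\on{id}\mp iJ^*)$, which have constant rank. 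This establishes the decomposition in (1).

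For the characterisation, fix $\omega\in (T_x^*M)_\mbc$ and $Z\in T_x^{0,1}M$, so that $JZ=-iZ$. If $J^*\omega=i\omega$, then
\[i\,\omega(Z)=(J^*\omega)(Z)=\omega(JZ)=-i\,\omega(Z),\]
so $\omega(Z)=0$. Hence $T_{1,0}$ is contained in the annihilator of $T^{0,1}$.

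Conversely, suppose $\omega$ kills $T^{0,1}_xM$. Write $\omega=\omega^{1,0}+\omega^{0,1}$. By the computation above, $\omega^{1,0}$ also kills $T^{0,1}$, so $\omega^{0,1}$ kills $T^{0,1}$ too. The same computation with the roles swapped shows $\omega^{0,1}$ kills $T^{1,0}$. So $\omega^{0,1}$ vanishes on all of $T_\mbc M=T^{1,0}\oplus T^{0,1}$, hence $\omega^{0,1}=0$ and $\omega\in T_{1,0}$. An alternative is a dimension count: the annihilator of the $n$-dimensional $T^{0,1}_x$ in the $2n$-dimensional dual has dimension $n$, matching $\dim T_{1,0}$. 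The $T_{0,1}$ statement is symmetric. The only delicate point here is making sure the complex-linear extension of $J^*$ agrees with the transpose of the complex-linear extension of $J$ on $(T_xM)_\mbc^*\cong(T_x^*M)_\mbc$. This holds because both are $\mbc$-linear and agree on real elements.

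For (2), let $\bar{\ }$ denote complex conjugation on $T^*_\mbc M$. It is a real-linear involution commuting with $J^*$, since $J^*$ is real. If $J^*\omega=i\omega$, then $J^*\bar\omega=\overline{J^*\omega}=-i\bar\omega$, so conjugation maps $T_{1,0}U$ into $T_{0,1}U$, and by symmetry it is a bijection. Because conjugation is antilinear, it sends a pointwise basis to a pointwise basis: if $\sum c_j\bar\omega^j=0$, then conjugating gives $\sum\bar c_j\omega^j=0$, so all $c_j=0$. Spanning follows the same way by conjugating an expansion. Smoothness of $\bar\omega^j$ is clear. I expect no real obstacle; the main care is the identification of complexifications noted above.
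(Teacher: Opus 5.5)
Your proof is correct. The paper gives no proof of this proposition: it is listed as standard background in the preliminaries, citing Lee and Kobayashi--Nomizu. Your argument (fibrewise eigenspace decomposition, the annihilator argument or dimension count, and conjugation commuting with the real operator $J^*$ for (2)) is the standard proof.

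One detail to add: the statement is phrased with sections $Z\in\Gamma(T^{0,1}M)$. You should note that every vector in $T^{0,1}_xM$ extends to a smooth section of $T^{0,1}M$ (using a local frame and a bump function). Then the condition that $\omega$ annihilates all sections reduces to your pointwise condition.
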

	
	Using properties of the exterior power, for $k\in \mbz_{\geq 0}$, we see that the exterior power of the complexified cotangent bundle decomposes into 
	\[\Lambda^k(T_\mbc^*M)=\bigoplus_{p+q=k}\left(\Lambda^pT_{1,0}M\otimes\Lambda^qT_{0,1}M\right).\]
	We emphasise that $\displaystyle\Lambda^k(T_\mbc^*M)$ is a smooth complex vector bundle and the elements are complex differential $k$-forms.
	
	\begin{definition}[$(p,q)$-forms]
		Let $(M,J)$ be an almost complex manifold and $p,q$ non-negative integers. Denote by $\Omega^{p,q}(M,J)$ (or simply $\Omega^{p,q}(M)$) the space of sections of $\Lambda^p T_{1,0}M\otimes \Lambda^q T_{0,1}M$. The elements of $\Omega^{p,q}(M)$ are called $(p,q)$-forms. Moreover, for $k\geq 0$,
		\begin{equation}
			\Omega^k(T_\mbc^*M)=\bigoplus_{p+q=k}\Omega^{p,q}(M),
			\label{decomposition into p,q forms}
		\end{equation}
		where $\Omega^k(T_\mbc^*M)$ is the space of sections of smooth complex differential $k$-forms.
	\end{definition}
	
	We have the following characterisations of $(p,q)$-forms when $M$ is a complex manifold.
	
	\begin{proposition}
		Let $(M,J)$ be a complex manifold and $p,q$ be nonnegative integers. Let $(U,z^1,\dots,z^n)$ be a local holomorphic chart. For $1\leq j\leq n$, define 
		\[\mrd z^j:=\mrd x^j+i \,\mrd y^j,\, \mrd \bar{z}^j:=\mrd x^j - i\,\mrd y^j,\]
		where $z^j=x^j+iy^j$. Then a local frame for $\Lambda^{p,q}(T_\mbc^*M)$ over $U$ is 
		\[\{\mathrm{d}z^{i_1}\wedge\cdots\wedge \mathrm{d}z^{i_p}\wedge \mathrm{d}\bar{z}^{j_1}\wedge \cdots\wedge \mathrm{d}\bar{z}^{j_q}: 1\leq i_1<\cdots < i_p\leq n, 1\leq j_1<\cdots < j_p\leq n\}.\] 
	\end{proposition}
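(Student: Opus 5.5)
The plan is to reduce the statement to a pointwise computation in the chart $(U,z^1,\dots,z^n)$. First I identify $T_{1,0}U$ and $T_{0,1}U$ explicitly, and then I use the splitting of the exterior algebra recorded just before the definition of $(p,q)$-forms.

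The first step is to evaluate the forms $\mrd z^j$ and $\mrd \bar z^j$ on the frames $\{\del/\del z^k\}$ and $\{\del/\del \bar z^k\}$. Since
\[\frac{\del}{\del z^k}=\frac12\Big(\frac{\del}{\del x^k}-i\frac{\del}{\del y^k}\Big),\qquad \frac{\del}{\del \bar z^k}=\frac12\Big(\frac{\del}{\del x^k}+i\frac{\del}{\del y^k}\Big),\]
a direct computation gives
\[\mrd z^j\Big(\frac{\del}{\del z^k}\Big)=\delta^j_k,\qquad \mrd z^j\Big(\frac{\del}{\del \bar z^k}\Big)=0,\]
and, symmetrically,
\[\mrd \bar z^j\Big(\frac{\del}{\del \bar z^k}\Big)=\delta^j_k,\qquad \mrd \bar z^j\Big(\frac{\del}{\del z^k}\Big)=0.\]
By the proposition on the canonical structure, $\{\del/\del \bar z^k\}$ is a local frame for $T^{0,1}U$. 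Each $\mrd z^j$ annihilates this frame, so the characterisation in Proposition \ref{T_{1,0} and T_{0,1}}(1) places $\mrd z^j$ in $T_{1,0}U$. The same argument places $\mrd \bar z^j$ in $T_{0,1}U$. Alternatively, one can check $J^*\mrd z^j=\mrd z^j\circ J=i\,\mrd z^j$ directly from $J(\del_{x})=\del_y$ and $J(\del_y)=-\del_x$.

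The second step shows that these forms are frames. The $2n$ forms $\mrd z^1,\dots,\mrd z^n,\mrd\bar z^1,\dots,\mrd\bar z^n$ are dual to the frame $\{\del/\del z^k,\del/\del\bar z^k\}$ of $T_\mbc U$, so they are linearly independent at every point. The subbundle $T_{1,0}U$ has rank $n$: it is the annihilator of the rank-$n$ bundle $T^{0,1}U$ inside the rank-$2n$ bundle $T^*_\mbc U$. Hence $\{\mrd z^j\}$ is a local frame for $T_{1,0}U$. Part (2) of Proposition \ref{T_{1,0} and T_{0,1}}, or the same argument applied again, shows that $\{\mrd\bar z^j\}$ is a local frame for $T_{0,1}U$.

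The final step is pure multilinear algebra. If $\{e_i\}$ is a frame of $E$, then $\{e_{i_1}\wedge\cdots\wedge e_{i_p}\}_{i_1<\cdots<i_p}$ is a frame of $\Lambda^pE$. Tensor products of frames give frames of $\Lambda^pT_{1,0}U\otimes\Lambda^qT_{0,1}U$, and this bundle is identified with its image in $\Lambda^{p+q}T^*_\mbc U$ via the wedge product. This yields the stated frame. The statement's second index range should read $j_1<\cdots<j_q\le n$; I will use that range.

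I do not expect a real obstacle. The only point needing care is justifying that the wedge map identifies $\Lambda^pT_{1,0}\otimes\Lambda^qT_{0,1}$ with a subbundle of $\Lambda^{p+q}T^*_\mbc$. I plan to settle it by a dimension count: the images of these products over all $p+q=k$ are exactly the wedge monomials in the full basis $\{\mrd z^j,\mrd\bar z^j\}$, which are linearly independent and number $\binom{2n}{k}$, matching the rank of $\Lambda^kT^*_\mbc U$.
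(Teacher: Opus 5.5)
The paper gives no proof of this proposition; it is listed among the standard preliminaries taken from Lee and Kobayashi--Nomizu, so there is nothing to compare your route against. Your argument is correct and complete.

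The duality relations $\mrd z^j(\partial/\partial z^k)=\delta^j_k$ and $\mrd z^j(\partial/\partial\bar z^k)=0$, combined with the paper's own characterisation of $T_{1,0}M$ as the annihilator of $T^{0,1}M$, place $\mrd z^j$ in $T_{1,0}U$. The rank count then makes $\{\mrd z^j\}$ a frame, and conjugation handles $\{\mrd\bar z^j\}$.

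Your final step proves slightly more than the statement. The count $\sum_{p+q=k}\binom{n}{p}\binom{n}{q}=\binom{2n}{k}$, together with the linear independence of the monomials $\mrd z^I\wedge\mrd\bar z^J$, shows that the wedge map
\[\textstyle\bigoplus_{p+q=k}\Lambda^pT_{1,0}U\otimes\Lambda^qT_{0,1}U\to\Lambda^kT^*_\mbc U\]
is an isomorphism. That is exactly the decomposition the paper asserts, also without proof, just before defining $(p,q)$-forms.

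You are also right that the second index range in the statement is a typo and should read $1\le j_1<\cdots<j_q\le n$.
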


	The following proposition is important in decomposing a complex form into a sum of $(p,q)$-forms.
	\begin{proposition}
		Let $(M,J)$ be an almost complex manifold.
		\begin{enumerate}
			\item Let $\omega\in \Omega^k(T_\mbc^*M)$ be a $(p,q)$-form. Then $\omega$ vanishes if more than $p$ arguments are $(1,0)$ vector fields or more than $q$ arguments are $(0,1)$ vector fields.
			\item  A complex $k$-form $\omega\in \Omega^k(T_\mbc^*M)$ is a $(p,q)$-form for some nonnegative integers $p,q$ if and only if it vanishes whenever more than $p$ arguments are $(1,0)$ vector fields or more than $q$ arguments are $(0,1)$ vector fields.
		\end{enumerate}
		\label{characterisation of p,q forms}
	\end{proposition}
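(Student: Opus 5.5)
The plan is to work pointwise, using the decomposition $\Lambda^k(T^*_\mbc M)=\bigoplus_{p+q=k}\Lambda^pT_{1,0}M\otimes\Lambda^qT_{0,1}M$. At a point $x\in M$, choose a basis $\{\omega^1,\dots,\omega^n\}$ of $(T_{1,0}M)_x$. By Proposition \ref{T_{1,0} and T_{0,1}}, $\{\bar\omega^1,\dots,\bar\omega^n\}$ is a basis of $(T_{0,1}M)_x$. Let $\{Z_1,\dots,Z_n\}$ be the dual basis of $(T^{1,0}M)_x$ with respect to $\omega^j$. Then $\{\bar Z_1,\dots,\bar Z_n\}$ is dual to $\bar\omega^j$, since $\omega^j$ kills $T^{0,1}$ and $\bar\omega^j$ kills $T^{1,0}$ by the characterisation in Proposition \ref{T_{1,0} and T_{0,1}}. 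The wedge products $\omega^I\wedge\bar\omega^J$ with $|I|=p'$ and $|J|=q'$ then form a basis of $\Lambda^{p'}\otimes\Lambda^{q'}$ at $x$. Since $(p,q)$-ness, vanishing and evaluation are all pointwise, it suffices to argue at each $x$. Multilinearity reduces arguments to basis vectors $Z_i,\bar Z_j$.

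For (1), take $\omega$ of type $(p,q)$ and expand it as $\sum c_{IJ}\,\omega^I\wedge\bar\omega^J$ with $|I|=p$ and $|J|=q$. Evaluate a single term $\omega^I\wedge\bar\omega^J$ on $k=p+q$ basis vectors, say $r$ of type $Z$ and $s$ of type $\bar Z$. Use the determinant formula for a wedge of one-forms. Each $\omega^i$ vanishes on every $\bar Z_j$, and each $\bar\omega^j$ vanishes on every $Z_i$. So the matrix is block-structured with rows indexed by $I\cup J$ and columns by the arguments, and the entries between mismatched types are zero. If $r>p$, the $p$ rows coming from $I$ are the only ones that can be nonzero on the $r$ columns of type $Z$. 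Those $r$ columns therefore lie in a space of dimension $p<r$, so they are linearly dependent and the determinant vanishes. The case $s>q$ is symmetric. Arbitrary $(1,0)$ and $(0,1)$ vectors then follow by multilinearity.

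For (2), the forward direction is (1). For the converse, take a $k$-form $\omega$ that vanishes whenever more than $p$ arguments are $(1,0)$ or more than $q$ are $(0,1)$. Write $\omega=\sum_{p'+q'=k}\omega^{p',q'}$ using the decomposition (\ref{decomposition into p,q forms}). Fix $(p',q')\neq(p,q)$ with $p'+q'=k$; then either $p'>p$ or $q'>q$. Evaluate $\omega$ on $(Z_{i_1},\dots,Z_{i_{p'}},\bar Z_{j_1},\dots,\bar Z_{j_{q'}})$. By hypothesis this is zero. By (1) applied to each component, only $\omega^{p',q'}$ contributes: a component of type $(a,b)\neq(p',q')$ has $a<p'$ or $b<q'$, hence too many arguments of one type. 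The value equals the coefficient of $\omega^I\wedge\bar\omega^J$ in $\omega^{p',q'}$, up to the fixed normalisation of the duality between the wedge basis and evaluation. So every coefficient of $\omega^{p',q'}$ vanishes, and therefore $\omega=\omega^{p,q}$.

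The main obstacle is the bookkeeping in (1). One needs the block-determinant (rank) argument stated cleanly, and in (2) one must confirm that $\omega^I\wedge\bar\omega^J$ evaluated on the dual vectors is $1$ times a sign and $0$ on other index sets. Both follow from the determinant formula once the duality $\omega^i(\bar Z_j)=0$ and $\bar\omega^i(Z_j)=0$ is in place. Everything else is linear algebra done fibrewise.
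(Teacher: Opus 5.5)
The paper gives no proof of this proposition; it is listed as standard background from the references cited for Section 2.2. Your fibrewise argument is correct and is the standard proof found there. It uses the dual frames $\{Z_i,\bar Z_j\}$ and $\{\omega^i,\bar\omega^j\}$, the block structure of $\det(\alpha^a(v_b))$ for (1), and for the converse in (2) reads off the coefficients of $\omega^{p',q'}$ by evaluating on $(Z_I,\bar Z_J)$.

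Two small points are worth making explicit.
\begin{itemize}
\item The converse in (2) needs $p+q=k$. You use this tacitly when you say ``either $p'>p$ or $q'>q$''. For $p+q>k$ the statement fails: for $(p,q)=(1,1)$, every $1$-form satisfies the vanishing condition vacuously.
\item The hypothesis is phrased in terms of vector fields. Before invoking it at $x$, extend each $Z_i$ to a global $(1,0)$ vector field: take the dual frame of a local coframe of $T_{1,0}M$ and multiply by a bump function.
\end{itemize}
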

	
	The notion of complex vector-valued $(p,q)$- forms extends naturally. Let $V$ be a complex vector space. The space of $V$-valued complex $k$-forms, denoted $\Omega^k(M;V)$, can be identified with $\Omega^k(M;\mbc)\otimes V$. We may extend this to vector-valued $(p,q)$-forms.
	
	\begin{definition}[Vector-Valued $(p,q)$-Forms]
		Let $(M,J)$ be an almost complex manifold and $V$ a complex vector space. For $p+q=k$, the space of vector-valued $(p,q)$-forms, denoted by $\Omega^{p,q}(M,J;V)$ is the tensor product
		\[\Omega^{p,q}(M,J;V)=\Omega^{p,q}(M,J)\otimes_\mbc V.\]
	\end{definition}

	\subsubsection{Almost Holomorphic Maps}
	We end this section by considering maps between almost complex manifolds. In this category, we are concerned with almost holomorphic (or almost complex) maps.
	
	\begin{definition}[Almost Holomorphic Maps]
		Let $F:(M,J_M)\to (N,J_N)$ be a smooth map between two almost complex manifolds. It is said to be almost holomorphic if 
		\[F_*\circ J_M=J_N\circ F_*,\]
		equivalently, if the following diagram commutes:
		\begin{center}
			\begin{tikzcd}
				TM \arrow[d, "F_*"'] \arrow[r, "J_M"] & TM \arrow[d, "F_*"] \\
				TN \arrow[r, "J_N"]                   & TN                 
			\end{tikzcd}
		\end{center}
		It is called an almost biholomorphism if it is bijective and its inverse is also almost holomorphic.
	\end{definition}
	
	The notion of almost holomorphicity generalises that of holomorphicity of maps between complex manifolds.
	\begin{proposition}
		Let $M$ and $N$ be complex manifolds equipped with canonical almost complex structures. A smooth map $F:M\to N$ is holomorphic if and only if it is almost holomorphic with respect to the canonical almost complex structures.
	\end{proposition}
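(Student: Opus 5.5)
The plan is to reduce both directions to a statement in local holomorphic charts and then apply the Cauchy--Riemann equations. Since holomorphicity and almost holomorphicity are both local, fix $x\in M$ and holomorphic charts $(U,z^1,\dots,z^n)$ around $x$ and $(W,w^1,\dots,w^m)$ around $F(x)$ with $F(U)\subseteq W$. By Definition \ref{natural almost complex structures on complex manifolds}, the chart maps intertwine the canonical almost complex structures with $J_{\mbc^n}$ and $J_{\mbc^m}$. So $F$ is almost holomorphic on $U$ exactly when the coordinate expression $\tilde F=\psi\circ F\circ\phi^{-1}$ satisfies $\tilde F_*\circ J_{\mbc^n}=J_{\mbc^m}\circ\tilde F_*$. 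Likewise, $F$ is holomorphic exactly when $\tilde F$ is holomorphic. This reduces the proposition to the case of a smooth map between open subsets of $\mbc^n$ and $\mbc^m$.

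For that case, write $\tilde F=(u^k+iv^k)_{k=1}^m$ and $z^j=x^j+iy^j$. Compute $\tilde F_*$ on the basis vectors using the definition of $J$:
\[
\tilde F_*\Big(\frac{\del}{\del x^j}\Big)=\sum_k\Big(\frac{\del u^k}{\del x^j}\frac{\del}{\del a^k}+\frac{\del v^k}{\del x^j}\frac{\del}{\del b^k}\Big),
\]
with $w^k=a^k+ib^k$, and similarly for $\frac{\del}{\del y^j}$. The condition $\tilde F_*J(\del/\del x^j)=J\tilde F_*(\del/\del x^j)$ then reads $\tilde F_*(\del/\del y^j)=J\tilde F_*(\del/\del x^j)$. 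Comparing coefficients gives exactly
\[
\frac{\del u^k}{\del y^j}=-\frac{\del v^k}{\del x^j},\qquad \frac{\del v^k}{\del y^j}=\frac{\del u^k}{\del x^j}.
\]
The condition on $\del/\del y^j$ is equivalent to this one, since applying $J$ to both sides gives $J^2=-\mathrm{id}$. So almost holomorphicity is equivalent to the Cauchy--Riemann equations for every component in every variable. By the standard characterisation, for smooth maps this is equivalent to holomorphicity.

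No step looks like a real obstacle; the only care needed is in the bookkeeping of the reduction. Two points need checking. First, the charts must be compatible with $J$, which the commutative diagram in Definition \ref{natural almost complex structures on complex manifolds} provides. Second, the result must not depend on the choice of chart, which follows from the first remark after that definition.
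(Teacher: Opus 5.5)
Your proof is correct. Almost holomorphicity is a pointwise condition on $F_*$ and holomorphicity is local, so you may work in charts, which intertwine the canonical structures with the standard ones by the defining diagram. Your coefficient comparison then gives exactly the Cauchy--Riemann equations $\partial u^k/\partial x^j=\partial v^k/\partial y^j$ and $\partial u^k/\partial y^j=-\partial v^k/\partial x^j$, and these characterise holomorphicity for smooth maps in several variables. The paper gives no proof of this proposition and quotes it as standard background from its references; your chart-plus-Cauchy--Riemann reduction is the standard argument that fills that gap.
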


	The next proposition says that an almost holomorphic map preserves structures induced by the respective almost complex structures.
	\begin{proposition}
		Let $(M,J_M)$ and $(N,J_N)$ be almost complex manifolds. Let $F:M\to N$ be an almost holomorphic map. The following are true.
		\begin{enumerate}
			\item $F_*$ maps $\Gamma(T^{1,0}M)$ to $\Gamma(T^{1,0}N)$ and $\Gamma(T^{0,1}M)$ to $\Gamma(T^{0,1}N)$.
			\item $F^*(\Omega^{p,q}N)\subseteq \Omega^{p,q}M$.
		\end{enumerate}
		\label{almost holomorphic maps preserve structures}
	\end{proposition}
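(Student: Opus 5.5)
The plan is to treat the two parts in order, proving (1) pointwise from the linear-algebraic description of $T^{1,0}$ and $T^{0,1}$, and then deducing (2) from (1) by duality, using the characterisation of $(p,q)$-forms in Proposition \ref{characterisation of p,q forms}.

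For (1), we first extend $F_*$ complex-linearly to $F_*:T_\mbc M\to T_\mbc N$. Since $F_*\circ J_M=J_N\circ F_*$ on real vectors, the same identity holds on complexified vectors by linearity. Now take a $(1,0)$ vector field, which we write as $Z=X-iJ_MX$ with $X$ real. Then
\[F_*Z=F_*X-iF_*J_MX=F_*X-iJ_N(F_*X),\]
and this lies in $T^{1,0}N$ by the description of $T^{1,0}$ in the proposition on $T^{1,0}M$ and $T^{0,1}M$. Equivalently, one can argue that $J_N(F_*Z)=F_*(J_MZ)=iF_*Z$, so $F_*Z$ is in the $i$-eigenspace. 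The $(0,1)$ case is identical with the sign changed.

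Two points need care here. Since $F$ need not be a diffeomorphism, $F_*$ of a vector field is not in general a vector field on $N$. We therefore read the statement pointwise, with $F_*$ taking $(T_xM)^{1,0}$ into $(T_{F(x)}N)^{1,0}$, or along $F$, and we prove it pointwise.

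For (2), let $\omega\in\Omega^{p,q}(N)$ and consider $F^*\omega$ at a point $x$. Suppose more than $p$ of the arguments $Z_1,\dots,Z_k$ are $(1,0)$ vectors at $x$. Then
\[(F^*\omega)(Z_1,\dots,Z_k)=\omega(F_*Z_1,\dots,F_*Z_k),\]
and by (1) more than $p$ of the pushed-forward arguments are $(1,0)$ vectors at $F(x)$. So this expression vanishes by part (1) of Proposition \ref{characterisation of p,q forms}. The case of more than $q$ arguments of type $(0,1)$ is the same.

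Part (2) of Proposition \ref{characterisation of p,q forms} then gives $F^*\omega\in\Omega^{p,q}(M)$. That characterisation is applied pointwise on tangent vectors; since type is defined pointwise, this causes no problem.

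The only real obstacle is this bookkeeping: making sure the characterisation is used at the level of vectors at a point rather than global vector fields.
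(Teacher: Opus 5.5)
The paper gives no proof of this proposition. It appears among the standard preliminaries on almost complex structures, which defer to the references cited at the start of that subsection, so there is no paper argument to compare against. Your proof is correct and is the standard argument.

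Your pointwise reading of (1) is the right way to make the statement meaningful. For a non-diffeomorphism $F$, the pushforward $F_*$ of a vector field is not a vector field on $N$, so (1) should be read as $F_*\bigl((T_xM)^{1,0}\bigr)\subseteq (T_{F(x)}N)^{1,0}$, and likewise for $(0,1)$. In (2), the identity $(F^*\omega)(Z_1,\dots,Z_k)=\omega(F_*Z_1,\dots,F_*Z_k)$ for complex vectors holds because both sides are the complex-multilinear extensions of the real identity. You then apply the paper's vanishing characterisation of $(p,q)$-forms at a point, which is legitimate because type is a pointwise condition.

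An equally standard alternative for (2) dualises (1) instead of using the vanishing characterisation. For $\alpha\in T_{1,0}N$ one has $(F^*\alpha)\circ J_M=\alpha\circ J_N\circ F_*=iF^*\alpha$. Hence $F^*$ maps $T_{1,0}N$ into $T_{1,0}M$, and similarly $T_{0,1}N$ into $T_{0,1}M$. Writing $\omega$ locally as a sum of terms $f\,\alpha^{1}\wedge\cdots\wedge\alpha^{p}\wedge\bar\alpha^{1}\wedge\cdots\wedge\bar\alpha^{q}$, the multiplicativity of pullback then gives the result.

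That route needs local frames and $F^*(\alpha\wedge\beta)=F^*\alpha\wedge F^*\beta$. Yours is frame-free and uses only a proposition the paper has already stated. Both are complete.
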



	\subsubsection{Integrability of Almost Complex Structures}
	In this section, we state the important theorem - the Newlander-Nirenberg theorem, which connects almost complex structures and holomorphic structures of a manifold. The main object connecting them is the Nijenhius tensor.
	
	\begin{definition}[Nijenhius Tensor]
		Let $(M,J)$ be an almost complex manifold. The Nijenhius tensor of $J$ is defined by 
		\[N_J(X,Y)=[X,Y]+J[X,JY]+J[JX,Y]-[JX,JY].\]
	\end{definition}
	
	\begin{definition}[Integrability]
		Let $(M,J)$ be an almost complex structure. The almost complex structure $J$ is called integrable if $N_J\equiv 0$.
	\end{definition}
	
	\begin{theorem}[Newlander-Nirenberg Theorem]
		An almost complex manifold $(M,J)$ is a complex manifold such that $J$ becomes its canonical almost complex structure if and only if it is integrable, i.e., $N_J\equiv 0$.
		\label{newlander-nirenberg theorem}
	\end{theorem}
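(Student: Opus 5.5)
The statement has two directions of very different difficulty, and I would treat them separately. The common first step is a reformulation of $N_J\equiv 0$. Extend $N_J$ complex-bilinearly. For $X,Y\in\Gamma(TM)$, put $Z=X+iJX$ and $W=Y+iJY$, which lie in $T^{0,1}M$ by the Proposition describing $T^{1,0}M$ and $T^{0,1}M$. Expanding $\pi^{1,0}[Z,W]$ and comparing with the definition of $N_J$ should show that $\pi^{1,0}[Z,W]$ is a nonzero constant multiple of $\pi^{1,0}N_J(X,Y)$. Since the real part of $N_J(X,Y)$ is determined by its $\pi^{1,0}$-component, this gives the equivalence
\[
N_J\equiv 0 \iff [\Gamma(T^{0,1}M),\Gamma(T^{0,1}M)]\subseteq \Gamma(T^{0,1}M),
\]
that is, $N_J$ vanishes exactly when $T^{0,1}M$ is involutive. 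Dually, this is equivalent to $\mrd\big(\Omega^{1,0}(M)\big)\subseteq \Omega^{2,0}(M)\oplus\Omega^{1,1}(M)$, using Proposition \ref{characterisation of p,q forms}.

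\textbf{Necessity.} Suppose $M$ is a complex manifold and $J$ is its canonical almost complex structure (Definition \ref{natural almost complex structures on complex manifolds}). In a holomorphic chart, $T^{0,1}U$ has the local frame $\{\del/\del\bar z^j\}$. These are constant-coefficient combinations of coordinate vector fields, so they pairwise commute. Hence the bracket of any two sections of $T^{0,1}U$ is again a $C^\infty$-combination of the $\del/\del\bar z^j$, so $T^{0,1}M$ is involutive and $N_J\equiv0$ by the reformulation. Alternatively, $J$ has constant coefficients in the coordinates $(x^j,y^j)$, so every term of $N_J$ on coordinate fields vanishes, and $N_J$ is tensorial.

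\textbf{Sufficiency.} Assume $T^{0,1}M$ is involutive. It suffices to find, near each point, $n$ complex functions $z^1,\dots,z^n$ with $\bar Z z^j=0$ for all $Z\in\Gamma(T^{0,1}M)$ and with $\mrd z^1,\dots,\mrd z^n$ linearly independent over $\mbc$. Then the $\mrd z^j$ form a local frame of $T_{1,0}M$, so $(z^j)$ is a local diffeomorphism to $\mbc^n$ intertwining $J$ with $J_{\mbc^n}$. Transition maps between two such charts are then almost holomorphic, hence holomorphic by the Proposition characterising holomorphic maps, which yields the holomorphic atlas.

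I would first do the case where $J$ is real-analytic. Complexify $M$ locally, so that $T^{0,1}M$ becomes a genuine involutive holomorphic distribution. The holomorphic Frobenius theorem then gives $n$ independent first integrals, and restricting them to $M$ gives the $z^j$.

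\textbf{Main obstacle: the general smooth case.} Here complexification is unavailable, and I would follow the standard PDE route, for instance Malgrange's or Webster's method. First choose coordinates in which $J$ agrees with $J_{\mbc^n}$ at the base point. Then rescale a small ball to unit size, so that $J$ is a small perturbation of the standard structure. In these coordinates, the condition $\bar Z z^j=0$ becomes a system $\bar\del w + A(w)\,\del w=0$, where $A$ is small in $C^{k,\alpha}$. Integrability, via involutivity, supplies exactly the compatibility condition, namely that the perturbed $\bar\del$-operator squares to zero. I would then solve this system by an iteration using the Cauchy/Bochner–Martinelli homotopy operator for $\bar\del$, with Schauder (Hölder) estimates, and conclude by a contraction or Nash–Moser-type argument. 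This analytic step is the genuinely hard part. In the paper itself I would only sketch it and cite the literature (Newlander–Nirenberg, or the $L^2$ approach of Kohn and Hörmander), since only the statement is used in what follows.
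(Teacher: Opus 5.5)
The paper gives no proof of this statement. Newlander--Nirenberg is quoted as standard background, from the references named at the start of Section 2.2, and is then used only as a black box to pass from $N_J\equiv 0$ to a genuine complex structure (e.g.\ in the K\"ahler criterion of Section 5.3). So your proposal goes beyond the text, and there is no paper argument to compare it with.

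Its elementary parts are correct. The constant in your reformulation is exactly $1$. With $Z=X+iJX$ and $W=Y+iJY$ one finds $[Z,W]-iJ[Z,W]=N_J(X,Y)-iJN_J(X,Y)$, so $\pi^{1,0}[Z,W]=\pi^{1,0}N_J(X,Y)$ for the paper's sign convention. Since every section of $T^{0,1}M$ has the form $X+iJX$ with $X$ real, this gives the equivalence with involutivity. Necessity, the chart and transition-map argument, and the real-analytic case via local complexification and holomorphic Frobenius are all standard and fine.

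There are two points to fix in the sufficiency part.

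\textbf{A slip in the defining condition.} You want $Wz^j=0$ for all $W\in\Gamma(T^{0,1}M)$, equivalently $\mrd z^j\in T_{1,0}M$; this is what your next sentence actually uses. As written, $\bar Z z^j=0$ for $Z\in\Gamma(T^{0,1}M)$ would make the $z^j$ antiholomorphic.

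\textbf{The smooth case is a citation, and the sketch misplaces the difficulty.} The equation $\bar\del w+A\,\del w=0$ is linear in the unknown $w$, with point-dependent coefficients. The obstacle is that for $n\ge 2$ it is overdetermined and not elliptic. A naive fixed-point iteration with a $\bar\del$ homotopy operator therefore does not simply close: the intermediate right-hand sides are not $\bar\del$-closed. The resulting error terms have to be controlled, and the available solution operators do not gain enough regularity to absorb the first-order term $A\,\del w$ without loss.

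The standard proofs handle this in two ways:
\begin{itemize}
\item Malgrange imposes a gauge condition that turns integrability into a nonlinear elliptic system. By elliptic regularity the structure becomes real-analytic in suitable coordinates, and he then invokes exactly your real-analytic case.
\item Webster uses Henkin's homotopy formula on strictly convex domains, with a rapidly convergent (KAM-type) iteration on shrinking balls.
\end{itemize}

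Deferring this step to the literature is what the paper does, so relative to the paper nothing is missing. You should, however, present that step explicitly as a citation rather than as part of your own proof.
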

	
	We will use this to formulate an integrability condition of an almost complex structure on a principal bundle.

	\section{Almost Complex Structures on Principal Bundles}
	
	\subsection{Complexions on Principal Bundles}
	We now discuss the most important objects in this work, which are almost complex structures on a principal bundle. We will demonstrate the construction of such structures given that the base manifold is almost complex and the structure group is a complex Lie group. To emphasise that the Lie group is now complex, we use $H$ to represent the structure group and $\mfh$ to indicate its Lie algebra.
	The construction of such almost complex structures has been well studied. For example, see \cite{biswas}.
	
	For the remaining discussion in this section, let $(M,j)$ be an almost complex manifold and $\pi:P\to M$ a principal $H$-bundle over $M$ where $H$ is a complex Lie group. Denote by $i_H$ the canonical almost complex structure on $H$.
	
	\begin{definition}[Complexions]
		An almost complex structure $J$ on $P$ is called a \emph{complexion} (see \cite{biswas}) if the following hold:
		\begin{enumerate}
			\item The map $\pi:(P,J)\to(M,j)$ is almost holomorphic, namely 
			\[\pi_*\circ J = j\circ \pi_*.\]
			\item The action $P\times H\to P$ is almost holomorphic with respect to the respective almost complex structures.
		\end{enumerate}
	\end{definition}
	
	\begin{remark}
		In the second condition, the almost complex structure of $P\times H$ is taken to be $J\times i_H$. Let $h\in H$ and $p\in P$. Let $R_h:P\to P$ be the map sending $q\in P$ to $qh$ and $\ell_p:H\to P$ be the map sending $h'$ to $ph'$. The assumption implies that $R_h:(P,J)\to (P,J)$ and $\ell_p:(H,i_H)\to (P,J)$ are almost holomorphic. 
	\end{remark}
	
	\begin{proposition}
		Let $J$ be a complexion on a principal $H$-bundle over $(M,j)$. Then for every $h\in H$, $R_{h*}\circ J=J\circ R_{h*}$. Moreover, $J$ acts on the vertical subbundle $VP$ as follows: for any $\eta\in \mfh$ and $p\in P$, 
		\[J(\eta_p^\#)=(i_H\eta)_p^\#.\]
	\end{proposition}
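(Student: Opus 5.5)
Both claims follow by unpacking the second condition in the definition of a complexion, namely that the action $P\times H\to P$ is almost holomorphic for $J\times i_H$ on $P\times H$ and $J$ on $P$. Write $\mu:P\times H\to P$ for the action, so $\mu(p,h)=ph$. At a point $(p,h)$ we have $T_{(p,h)}(P\times H)=T_pP\oplus T_hH$ and
\[
\mu_*(X,0)=R_{h*}X,\qquad \mu_*(0,Y)=\ell_{p*}Y .
\]
These identities hold because $\mu$ restricted to $P\times\{h\}$ is $R_h$, and $\mu$ restricted to $\{p\}\times H$ is $\ell_p$.

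For the first claim, I apply the almost holomorphicity condition $\mu_*\circ(J\times i_H)=J\circ\mu_*$ to vectors of the form $(X,0)$. Since $(J\times i_H)(X,0)=(JX,0)$, this gives
\[
R_{h*}(JX)=J(R_{h*}X)\qquad\text{for all } X\in T_pP,
\]
so $R_{h*}\circ J=J\circ R_{h*}$. This is exactly the content of the preceding remark, and it needs nothing beyond the splitting of the tangent space of a product.

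For the second claim, I apply the same condition to vectors $(0,Y)$ with $h=e$ and $Y=\eta\in T_eH=\mfh$. This gives
\[
J(\ell_{p*}\eta)=\ell_{p*}(i_H\eta).
\]
It then remains to identify $\ell_{p*}\eta$ with $\eta_p^\#$. Since $t\mapsto\exp(t\eta)$ is a curve in $H$ through $e$ with velocity $\eta$,
\[
\ell_{p*}\eta=\frac{d}{dt}\Big|_{t=0}p\exp(t\eta)=\eta_p^\#,
\]
and the same computation applied to $i_H\eta\in\mfh$ gives $\ell_{p*}(i_H\eta)=(i_H\eta)_p^\#$. Combining the two displays yields $J(\eta_p^\#)=(i_H\eta)_p^\#$.

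The only step needing care is the meaning of $i_H\eta$. Here $i_H$ is the canonical almost complex structure of the complex Lie group $H$, evaluated at the identity. On $T_eH=\mfh$ it is multiplication by $i$ in the complex Lie algebra $\mfh$, because the exponential map is holomorphic. With that identification, $(i_H\eta)^\#$ is the fundamental vector field of the element $i\eta\in\mfh$, so the formula is meaningful and the argument is complete.
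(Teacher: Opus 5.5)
Your proof is correct and is essentially the paper's argument: the paper also gets $R_{h*}\circ J=J\circ R_{h*}$ from the fact that $R_h$ and $\ell_p$ are almost holomorphic, and then computes $(i_H\eta)^\#_p=\ell_{p*}(i_H\eta)=J(\ell_{p*}\eta)=J(\eta^\#_p)$ exactly as you do. Your only addition is spelling out, via $\mu_*(X,0)=R_{h*}X$ and $\mu_*(0,Y)=\ell_{p*}Y$, why almost holomorphicity of the action map gives that of $R_h$ and $\ell_p$; the paper leaves this implicit in its preceding remark.
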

	
	\begin{proof}
		The first statement follows from the remark above. To prove the second statement, since $\ell_{p,*}\circ i_H=J\circ \ell_{p,*}$, if $\eta\in \mfh$, we see that
		\[(i_H\eta)_p^\#=\frac{d}{dt}\Bigg|_{t=0}p\exp(i_H\eta)=\ell_{p,*}(i_H\eta)=J(\ell_{p*}\eta)=J(\eta_p^\#).\]
	\end{proof}
	
	Analogous to the fact that the space of connections is an affine space modeled on the space of tensorial one-forms of type $\Ad$, the space of complexions can likewise be realised as an affine space modeled on a suitable subspace of tensorial one-forms. To make this precise, we introduce tensorial $(p,q)$-forms of type $\Ad$.
	
	\begin{definition}[Tensorial $(p,q)$-forms]
		Let $J$ be a complexion on $P$. The space of tensorial $(p,q)$-forms of type $\Ad$, denoted by 
		\[\Omega_{\Ad}^{p,q}(P,J;\mfh),\]
		is the subspace of $\Omega^{p,q}(P,J;\mfh)=\Omega^{p,q}(P,J)\otimes \mfh$ consisting of $\mfh$-valued $(p,q)$-forms that is horizontal and $R_h^*\Psi=\Ad_{h^{-1}}\Psi$ for every $h\in H$.
	\end{definition}
	It is worth noting that a tensorial $(p,q)$-form is a complex differential form, and the Lie algebra $\mfh$ is regarded as a complex Lie algebra equipped with the complex structure $i_H$.
	
	\begin{theorem}[\cite{biswas}]
		Let $J^0$ be a fixed complexion on $P$. Then there is a one-to-one correspondence between the space of all complexions on $P$ and the space of all $(0,1)$-tensorial forms (relative to $J^0$) on $P$, $\Omega_{\Ad}^{0,1}(P,J^0;\mathfrak{h})$. More precisely, let $J$ be a complexion on $P$. Then there exists $\Psi\in \Omega_{\Ad}^{0,1}(P,J^0;\mathfrak{h})$ such that
		\begin{equation}
			J=J^0+(\Psi(\cdot))^\#.
			\label{J-J^0}
		\end{equation}
		On the other hand, for every $\Psi\in \Omega_{\Ad}^{0,1}(P,J^0;\mathfrak{h})$, the endomorphism $J$ defined by \eqref{J-J^0} is a complexion on $P$. Therefore, the space of complexions on $P$ is an affine space modeled on the space of $(0,1)$-tensorial forms.
		\label{space of complexions as an affine space}
	\end{theorem}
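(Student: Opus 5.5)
The plan is to study the difference $S:=J-J^0$ of two complexions and show that it has the form $(\Psi(\cdot))^\#$ for a tensorial $(0,1)$-form $\Psi$. The converse direction is handled by checking directly that such an $S$ turns $J^0$ into a complexion.

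\textbf{From $J$ to $\Psi$.} Let $J$ be a complexion.
\begin{enumerate}
\item Both $J$ and $J^0$ cover $j$, so $\pi_*\circ S=j\circ\pi_*-j\circ\pi_*=0$. Hence $S$ takes values in $VP$.
\item By the preceding proposition, $J$ and $J^0$ both act on fundamental vectors by $\eta^\#_p\mapsto (i_H\eta)^\#_p$. Therefore $S$ vanishes on $VP$.
\item Since $\xi\mapsto\xi^\#_p$ is an isomorphism $\mfh\to V_pP$, there is a unique real $\mfh$-valued one-form $\Psi$ with $S(X)=(\Psi(X))^\#$. By the previous step, $\Psi$ is horizontal.
\item Both $J$ and $J^0$ commute with $R_{h*}$, so $R_{h*}S=SR_{h*}$. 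Using $R_{h*}\xi^\#=(\Ad_{h^{-1}}\xi)^\#$, this gives $R_h^*\Psi=\Ad_{h^{-1}}\Psi$. Thus $\Psi$ is tensorial of type $\Ad$.
\end{enumerate}

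\textbf{The $(0,1)$ condition.} This is the main point. Expand
\[
-\mathrm{id}=J^2=(J^0)^2+J^0S+SJ^0+S^2 .
\]
Here $S^2=0$, because $S$ kills $VP$ and lands in $VP$. Also $J^0S=(i_H\Psi)^\#$, since $J^0$ acts on vertical vectors through $i_H$. The identity therefore reduces to
\[
i_H\Psi+\Psi\circ J^0=0 .
\]
Now regard $\Psi$ as a complex $\mfh$-valued form, with $\mfh$ a complex vector space via $i_H$, extended complex-linearly to $T_\mbc P$. On $Z\in T^{1,0}P$ we have $J^0Z=iZ$, so the identity gives $i\Psi(Z)+i\Psi(Z)=0$, i.e.\ $\Psi(Z)=0$. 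By Proposition \ref{characterisation of p,q forms}, $\Psi$ is a $(0,1)$-form.

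The delicate part is the meaning of complex linearity here. $\Psi$ is a real form valued in the complex vector space $\mfh$, and it must be identified with an element of $\Omega^{0,1}(P,J^0)\otimes_\mbc\mfh$. Concretely, the $(0,1)$ condition should read $\Psi(J^0X)=-i_H\Psi(X)$, meaning $\Psi$ is $\mbc$-antilinear from $(T_pP,J^0)$ to $(\mfh,i_H)$. I would state this identification carefully and then check that it matches the tensor-product definition.

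\textbf{From $\Psi$ to $J$.} Conversely, given $\Psi\in\Omega^{0,1}_{\Ad}(P,J^0;\mfh)$, define $J=J^0+\Psi^\#$. Then:
\begin{enumerate}
\item Reversing the computation above, the antilinearity $\Psi\circ J^0=-i_H\Psi$ together with $S^2=0$ gives $J^2=-\mathrm{id}$.
\item Since $\pi_*\circ\Psi^\#=0$, we get $\pi_*J=j\pi_*$.
\item Equivariance of $\Psi$ gives $R_{h*}J=JR_{h*}$.
\item $J=J^0$ on $VP$, because $\Psi$ is horizontal. Hence $\ell_{p*}\circ i_H=J\circ\ell_{p*}$.
\end{enumerate}
The last two items give holomorphicity of the action $P\times H\to P$, since its differential at $(p,h)$ is $R_{h*}\oplus\ell_{p*}$. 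The two constructions are mutually inverse, which yields the bijection and the affine structure.
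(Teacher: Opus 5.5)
Your proposal is correct and follows essentially the same route as the paper's proof. Both arguments show that $J-J^0$ is vertical-valued via $\pi_*$, that $\Psi$ is horizontal via the action on fundamental vectors, that $\Psi$ is $\Ad$-equivariant via $R_{h*}$, and that the $(0,1)$ condition follows from $(J-J^0)^2=0$. Expanding $J^2=(J^0+S)^2$ is a slightly more direct way to reach the paper's identity $\Psi(J^0X)=-i_H\Psi(X)$. Your explicit checks of the converse, and of the identification of antilinear forms with elements of $\Omega^{0,1}(P,J^0)\otimes_{\mathbb{C}}\mathfrak{h}$, fill in what the paper leaves as ``immediate from computations.''
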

	
	\begin{proof}
		First note that $J-J^0$ is a map from $TP$ to $VP$, since $\pi_*(J-J^0)=\pi_*J-\pi_*J^0=j\pi_*-j\pi_*=0$. Therefore, for all $X\in TP$, there exists a unique $\Psi(X)\in\mathfrak{h}$ such that $(J-J^0)(X)=(\Psi(X))^\#$. This defines a linear map $\Psi:TP\to \mathfrak{h}$. Now we show $\Psi\in \Omega_{\Ad}^{(0,1)}(P,J^0;\mathfrak{h})$. If $X=\eta^\#$ is vertical, since $(J-J^0)(\eta^\#)=(i\eta)^\#-(i\eta)^\#=0,$ it follows that $\Psi(X)=0$. The Ad-equivariance of $\Psi$ follows from the computation
		\begin{align*}
			(R_h^*\Psi(X))_{ph}^\# &=(\Psi(R_{h*}X))_{ph}^\#\\
			&=(J_{ph}-J_{ph}^0)(R_{h*}(X_p))\\
			&=R_{h*}(J_p-J_p^0)(X_p)\\
			&=R_{h*}(\Psi_p(X_p)^\#)\\
			&=(\Ad_{h^{-1}}\Psi(X))_{ph}^\#.
		\end{align*}
		
		We continue to show that $\Psi$ is a $(0,1)$-form with respect to $J^0$. Since $J-J^0$ is a map from $TP$ to $VP$ and $J-J^0\mid_{VP}=0$, we have $(J-J^0)^2=0$. This can be simplified to 
		\begin{equation}
			JJ^0+2I=-J^0J.
			\label{JJ^0+J^0J}
		\end{equation}
		Let $X\in TP$. We see that
		\begin{align*}
			(i\Psi(J^0X))^\#&=J^0(\Psi(J^0X)^\#)\\
			&=J^0(J-J^0)J^0X\\
			&=(J^0J)J^0X+J^0X\\
			&=(-JJ^0-2I)J^0X+J^0X & (\text{by } \eqref{JJ^0+J^0J})\\
			&=JX-2J^0 X+J^0X\\
			&=(J-J^0)(X)\\
			&=(\Psi(X))^\#.
		\end{align*}
		Therefore, $\Psi$ is a $(0,1)$-form relative to $J^0$. The converse is immediate from computations.
	\end{proof}
	
	\subsection{Connections and Induced Complexions}
	In the previous section, we saw how a complexion $J$ acts on the vertical subbundle $VP$: If $p\in P,\eta\in \mfh$, then $J(\eta_p^\#)=(i_H\eta)_p^\#$. If a connection on $P$ is chosen, the almost holomorphicity of $\pi:P\to M$ allows us to construct a complexion. 
	
	\begin{definition}[Induced Complexions]
		Let $A$ be a connection on $P$. The complexion $J_A$ induced by $A$ is defined by
		\begin{itemize}
			\item On $VP$, $J_A(\eta^\#):=(i_H\eta)^\#$, $\eta\in \mfh$.
			\item On $A$, $J_A:=(\pi_*\mid_{A})^{-1}\circ j\circ \pi_*$. 
		\end{itemize}
		Hence, if $\tilde{X}\in \Gamma(A)$ is the horizontal lift of a vector field $X$ on $M$, then $J_A(\tilde{X})=\widetilde{j(X)}$.
	\end{definition}
	
	As an immediate consequence, we have the following.
	\begin{proposition}
		Let $A$ be a connection on $P$. The induced complexion $J_A$ respects the Whitney sum decomposition $TP=VP\oplus A$:
		\[J(VP)=VP,\quad J(A)=A.\]
	\end{proposition}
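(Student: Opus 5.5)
The claim follows directly from the two clauses of the definition of $J_A$, so the plan is to check $J_A(VP)\subseteq VP$ and $J_A(A)\subseteq A$ separately. Equality then comes for free: $J_A$ is invertible, with $J_A^{-1}=-J_A$, so a linear map that sends a finite-dimensional subspace into itself must map it onto itself. Throughout, we use the Whitney sum $T_pP=V_pP\oplus A_p$ from the definition of a connection.

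\textbf{The vertical part.} Every element of $V_pP$ has the form $\eta_p^\#$ with $\eta\in\mfh$. By the first clause of the definition, $J_A(\eta_p^\#)=(i_H\eta)_p^\#$. Since $i_H\eta\in\mfh$, this is again a fundamental vector, hence lies in $V_pP$. Thus $J_A(V_pP)\subseteq V_pP$. The inclusion is in fact onto, because $\eta\mapsto i_H\eta$ is a bijection of $\mfh$.

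\textbf{The horizontal part.} The differential $\pi_*|_{A_p}\colon A_p\to T_{\pi(p)}M$ is an isomorphism, since $A_p$ is complementary to $V_pP=\ker\pi_*$ and $\pi_*$ is surjective. So $(\pi_*|_{A_p})^{-1}$ is defined and takes values in $A_p$. Consequently $J_A=(\pi_*|_{A})^{-1}\circ j\circ\pi_*$ maps $A_p$ into $A_p$. Concretely, for a vector field $X$ on $M$ with horizontal lift $\tilde X$, we have $J_A\tilde X=\widetilde{jX}$, which is horizontal.

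\textbf{Main obstacle.} There is no genuine difficulty here. The only point needing care is to confirm that $J_A$ is well defined and squares to $-\mathrm{id}$; this is the implicit claim that $J_A$ is a complexion, and it is what makes $J_A$ invertible in the first step. On $VP$ we have $J_A^2=-\mathrm{id}$ because $i_H^2=-\mathrm{id}$. On $A$ we compute
\[J_A^2=(\pi_*|_A)^{-1}\, j^2\, \pi_*|_A=-\mathrm{id}_A.\]
Hence $J_A$ is invertible on each summand, and together with the two inclusions above this gives $J_A(VP)=VP$ and $J_A(A)=A$.
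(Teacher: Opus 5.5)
Your proposal is correct and matches the paper, which gives no separate proof and records the statement as an immediate consequence of the two clauses defining $J_A$. Your argument unpacks exactly those clauses: $(i_H\eta)^\#$ is vertical, and $(\pi_*|_{A})^{-1}$ lands in $A$. Your invertibility remark (or, equivalently, the bijectivity of $i_H$ and $j$) correctly upgrades the two inclusions to equalities.
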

	
	The following theorem gives the relation between the corresponding tensorial forms of a connection and its induced complexion.
	
	\begin{theorem}[\cite{biswas}]~
		\begin{enumerate}
			\item If two connections $\theta$ and $\theta^0$ are related by $\Phi\in \Omega_{\Ad}^1(P,\mathfrak{h})$: $\theta=\theta^0+\Phi$, then their respective induced complexions $J$ and $J^0$ are related by $\Psi$, where \[\Psi=i\Phi-\Phi J^0.\]
			\item Two connections $\theta$ and $\theta^0$ that are related by $\Phi$ induce the same complexions if and only if $\Phi$ is a $(1,0)$-form with respect to $J^0$.
			\item If $\theta^0$ induces $J^0$ and $J$ is related to $J^0$ by $\Psi$, then $J$ is induced by the connection \[\theta=\theta^0-\frac{i}{2}\Psi.\]
		\end{enumerate}
		\label{relation between tensorial forms of connection and complexion}
	\end{theorem}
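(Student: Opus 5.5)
The whole argument rests on one explicit formula for the induced complexion $J_\theta$ in terms of the connection form $\theta$. Once that is in hand, part (1) is a short computation, and parts (2) and (3) follow from (1) together with Theorem~\ref{space of complexions as an affine space}.

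\textbf{Formula for $J_\theta$.} For a connection $\theta$, write $h^\theta_p:T_{\pi(p)}M\to A_p$ for the horizontal lift. Splitting $X\in T_pP$ as $X=(\theta(X))^\#+h^\theta(\pi_*X)$, the definition of induced complexion gives
\[
J_\theta X=(i\,\theta(X))^\#+h^\theta(j\,\pi_*X).
\]

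\textbf{Comparing horizontal lifts.} If $\theta=\theta^0+\Phi$, I claim
\[
h^\theta(v)=h^0(v)-\bigl(\Phi(h^0v)\bigr)^\#.
\]
To check this, note that the right-hand side projects to $v$. Applying $\theta$ to it gives $\theta^0(h^0v)+\Phi(h^0v)-\Phi(h^0v)=0$. Here I use $\theta(\xi^\#)=\xi$ and the fact that $\Phi$ kills vertical vectors.

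\textbf{Part (1).} Subtract the two expressions for $J_\theta X$ and $J^0X$.
\begin{itemize}
\item The vertical terms give $(i\theta(X))^\#-(i\theta^0(X))^\#=(i\Phi(X))^\#$.
\item The horizontal terms give $-\bigl(\Phi(h^0 j\pi_*X)\bigr)^\#$.
\item Since $h^0(j\pi_*X)=J^0X-(i\theta^0X)^\#$ and $\Phi$ is horizontal, we have $\Phi(h^0j\pi_*X)=\Phi(J^0X)$.
\end{itemize}
Hence $J-J^0=\bigl((i\Phi-\Phi J^0)(\cdot)\bigr)^\#$. By the uniqueness of $\Psi$ in Theorem~\ref{space of complexions as an affine space} (the map $\xi\mapsto\xi^\#$ is injective), this gives $\Psi=i\Phi-\Phi J^0$.

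\textbf{Part (2).} By (1), $J=J^0$ if and only if $\Phi\circ J^0=i\Phi$. This is exactly the condition that the $\mathfrak h$-valued form $\Phi$ is of type $(1,0)$ with respect to $J^0$, viewing $\mathfrak h$ as a complex vector space via $i_H$.

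\textbf{Part (3).} Set $\Phi=-\tfrac{i}{2}\Psi$.
\begin{itemize}
\item $\Phi$ is horizontal because $\Psi$ is.
\item $\Phi$ is $\Ad$-equivariant because $\Ad_h$ is complex-linear on $\mathfrak h$ for a complex Lie group $H$, so multiplication by $i$ commutes with $\Ad_{h^{-1}}$.
\end{itemize}
Thus $\Phi\in\Omega^1_{\Ad}(P,\mathfrak h)$, and by Proposition~\ref{space of connections as affine space} the form $\theta=\theta^0+\Phi$ is a connection.

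The $(0,1)$ condition, in the form $i\Psi(J^0X)=\Psi(X)$ established in the proof of Theorem~\ref{space of complexions as an affine space}, gives $\Psi J^0=-i\Psi$. Then by (1) the complexion induced by $\theta$ differs from $J^0$ by
\[
i\Phi-\Phi J^0=\tfrac12\Psi+\tfrac{i}{2}\Psi J^0=\tfrac12\Psi+\tfrac12\Psi=\Psi.
\]
So $\theta$ induces $J$.

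\textbf{Expected obstacle.} The only step needing care is the horizontal-lift comparison in part (1), since this is where the horizontality of $\Phi$ is used twice. The remaining steps are bookkeeping, the main risk being sign conventions for the $(0,1)$ condition.
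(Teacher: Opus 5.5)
Your proposal is correct and follows the paper's route. You compare the horizontal lifts via $h^\theta=h^0-(\Phi\circ h^0)^\#$, which is the paper's $X^h=X-(\Phi(X))^\#$ for $\theta^0$-horizontal $X$, read off $\Psi=i\Phi-\Phi J^0$, and then obtain (2) and (3) directly from (1). Your version is slightly more complete than the paper's, since it treats all tangent vectors at once (not only $\theta^0$-horizontal ones) and explicitly checks both that $\theta^0-\tfrac{i}{2}\Psi$ is a connection and that $\Psi J^0=-i\Psi$.
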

	
	\begin{proof}~
		\begin{enumerate}
			\item Let $A^0$ and $A$ be the horizontal distributions of $\theta^0$ and $\theta$ respectively. Let $X\in \Gamma(A^0)$. The horizontal component of $X$ with respect to $A$ is \[X^h=X-(\theta(X))^\#=X-(\theta^0(X)-\Phi(X))^\#=X-(\Phi(X))^\#.\]
			Now,
			\begin{align*}
				(\Psi(X))^\#&=(J-J^0)(X)\\
				&=JX^h+J(\Phi(X)^\#)-J^0X\\
				&=JX^h+(i\Phi(X))^\#-J^0X.
			\end{align*}
			Next, we observe that $\pi_*(J(X^h))=j(\pi_*(X^h))=j(\pi_*X)$ and
			\[\pi_*((J^0X)^h)=\pi_*(J^0X)=j(\pi_*X).\]
			Since $\pi_*:A\to TM$ is an isomorphism, we get $J(X^h)=(J^0X)^h$. The horizontal component of $J^0X$ with respect to $A$ is $J^0X-(\theta(J^0X))^\#=J^0X-(\Phi(J^0X))^\#$. Therefore, 
			\[(\Psi(X))^\#=(i\Phi(X))^\#-(\Phi J^0X)^\#.\]
			This proves the desired result.
			\item From (1), the connections $\theta$ and $\theta^0$ induce the same complexion if and only if $\Psi=0$. This is equivalent to $\Phi J^0=i\Phi$.
			\item Let $\theta=\theta^0-\frac{i}{2}\Psi$. By (1), the complexion $J^\theta$ induced by $\theta$ is 
			\[J^\theta=J^0+\left(i\left(-\frac{i}{2}\Psi\right)+\frac{i}{2}\Psi J^0\right)=J^0+(\Psi(\cdot))^\#=J.\]
			Therefore, $J$ is induced by $\theta$.
		\end{enumerate}
	\end{proof}
	
	It turns out that all complexions are induced complexions (we do not require a complexion to be induced by a connection in the definition).
	\begin{corollary}
		Every complexion on a principal $H$-bundle is induced by a connection.
	\end{corollary}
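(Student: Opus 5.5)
The plan is to combine the existence of connections on principal bundles with Theorem \ref{space of complexions as an affine space} and part (3) of Theorem \ref{relation between tensorial forms of connection and complexion}. Let $J$ be an arbitrary complexion on the principal $H$-bundle $P\to(M,j)$.

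First, I fix any connection $\theta^0$ on $P$. Such a connection exists by the standard partition-of-unity argument, since $\mca(P)\neq\emptyset$ is implicit in Proposition \ref{space of connections as affine space}. Let $J^0:=J_{A^0}$ be the complexion induced by $\theta^0$. This is a genuine complexion: $\pi$ is almost holomorphic by construction on $A^0$, and $\pi_*$ kills $VP$ while $J^0$ preserves $VP$. The action is almost holomorphic because $R_{h*}$ preserves $A^0$ and commutes with horizontal lifts. Moreover $R_{h*}(\eta^\#)=(\Ad_{h^{-1}}\eta)^\#$, and $\Ad_{h^{-1}}$ is complex linear on $\mfh$ because $H$ is a complex Lie group. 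Finally $\ell_p$ is almost holomorphic since $J^0(\eta_p^\#)=(i_H\eta)_p^\#$, transported by left translations.

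Next, Theorem \ref{space of complexions as an affine space} gives a unique $\Psi\in\Omega^{0,1}_{\Ad}(P,J^0;\mfh)$ with $J=J^0+(\Psi(\cdot))^\#$. I then set
\[\theta:=\theta^0-\tfrac{i}{2}\Psi.\]
Here $\Psi$ is horizontal and $\Ad$-equivariant, and multiplication by $i=i_H$ commutes with $\Ad_{h^{-1}}$. Hence $-\tfrac{i}{2}\Psi$ is a real $\mfh$-valued tensorial one-form of type $\Ad$, with $\mfh$ viewed as a real vector space. By Proposition \ref{space of connections as affine space}, $\theta$ is therefore a connection. Part (3) of Theorem \ref{relation between tensorial forms of connection and complexion} then says that the complexion induced by $\theta$ is $J$, which completes the argument.

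The only delicate point is making sure $-\tfrac{i}{2}\Psi$ really is a real-linear map $TP\to\mfh$. The issue is that $\Psi$ was produced as a map $TP\to\mfh$ via $(J-J^0)X=\Psi(X)^\#$, but it is called a $(0,1)$-form in the complex sense. I will read "$(0,1)$" as the condition $\Psi\circ J^0=-i\Psi$ on real vectors, which is how it is verified in the proof of Theorem \ref{space of complexions as an affine space}. Under that reading, $\Psi$ is a real form with values in the complex vector space $\mfh$, so $i\Psi$ makes sense. With this in place the corollary follows directly.
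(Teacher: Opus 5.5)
Your proposal is correct and follows the paper's proof. You fix a connection $\theta^0$ with induced complexion $J^0$, write $J=J^0+(\Psi(\cdot))^\#$ via Theorem~\ref{space of complexions as an affine space}, and take $\theta=\theta^0-\frac{i}{2}\Psi$ using part (3) of Theorem~\ref{relation between tensorial forms of connection and complexion}. Your extra checks — that a connection $\theta^0$ exists, that $J^0$ is a complexion, and that $-\frac{i}{2}\Psi$ is a real $\Ad$-tensorial one-form (so $\theta$ is a connection) — are all valid, and the paper leaves them implicit.
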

	\begin{proof}
		Let $\theta^0$ be a connection on $P$, and $J^0$ be its induced complexion. Suppose $J$ is a complexion on $P$. Then $J$ is related to $J^0$ by some $\Psi\in \Omega_{\Ad}^{0,1}(P,J^0;\mathfrak{h})$ by Theorem \ref{J-J^0}. By (3) of Theorem \ref{relation between tensorial forms of connection and complexion}, the complexion $J$ is induced by the connection $\theta=\theta^0-\frac{i}{2}\Psi$.
	\end{proof}

	\subsubsection{Integrability and Curvature Forms}
	In this section, we formulate a condition for the integrability of a complexion. It is related to the curvature two-form of the connection that induces the complexion. We begin with the following.

	\begin{lemma}
		Let $X\in \Gamma(TM)$ and $\eta\in \mfh$. Then $[\tilde{X},\eta^\#]=0$.
	\end{lemma}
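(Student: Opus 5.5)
The plan is to compute the bracket through flows. The fundamental vector field $\eta^\#$ is generated by the global flow $\phi_t = R_{\exp(t\eta)}$, since $\phi_t(p)=p\exp(t\eta)$ and $\frac{d}{dt}\big|_{t=0}\phi_t(p)=\eta^\#_p$. For any vector field $Y$ on $P$, the Lie bracket is the Lie derivative along this flow:
\[
[\eta^\#,Y]=\mathcal{L}_{\eta^\#}Y=\frac{d}{dt}\Big|_{t=0}\left(R_{\exp(-t\eta)}\right)_*Y_{p\exp(t\eta)}.
\]
So it suffices to show that $\tilde{X}$ is invariant under every right translation $R_g$, that is, $R_{g*}\tilde{X}_p=\tilde{X}_{pg}$ for all $g\in H$ and $p\in P$. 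Then the expression inside the derivative is constant in $t$, equal to $\tilde{X}_p$, and the derivative vanishes. This gives $[\eta^\#,\tilde{X}]=0$, and hence $[\tilde{X},\eta^\#]=0$.

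The key step is the $H$-invariance of the horizontal lift. Fix $g\in H$ and consider $R_{g*}\tilde{X}_p$.
\begin{enumerate}
	\item It is horizontal at $pg$, because $\tilde{X}_p\in A_p$ and the connection satisfies $R_{g*}(A_p)=A_{pg}$.
	\item It projects correctly: $\pi\circ R_g=\pi$ gives $\pi_*(R_{g*}\tilde{X}_p)=\pi_*\tilde{X}_p=X_{\pi(p)}=X_{\pi(pg)}$.
\end{enumerate}
The restriction $\pi_*|_{A_{pg}}:A_{pg}\to T_{\pi(pg)}M$ is an isomorphism, so the horizontal lift is unique. Therefore $R_{g*}\tilde{X}_p=\tilde{X}_{pg}$.

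I do not expect a real obstacle. The only care needed is in matching the sign and direction of the flow in the Lie-derivative formula, and in using that $\eta^\#$ is complete with flow $R_{\exp(t\eta)}$ (true because the right action is global). The result does not depend on $H$ being complex, on $j$, or on any complexion: it uses only the connection's equivariance and $\pi\circ R_g=\pi$.

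An equivalent local check is also available if preferred. In a local trivialization $P|_U\cong U\times H$, with connection $\theta=\Ad_{h^{-1}}\omega+h^{-1}\mrd h$, the horizontal lift of $X$ at $(x,h)$ is $(X_x,-(\omega(X_x))^{R}_h)$, which involves a right-invariant vector field on $H$. Meanwhile $\eta^\#=(0,\eta^{L})$ involves a left-invariant field. Left- and right-invariant fields on $H$ commute, and the $U$-components do not interact. I would present the flow argument as the proof, since it is trivialization-free.
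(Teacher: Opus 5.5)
Your proof is correct and follows the same route as the paper: the flow of $\eta^\#$ is $R_{\exp(t\eta)}$, and the $H$-invariance of $\tilde{X}$ makes $\mathcal{L}_{\eta^\#}\tilde{X}$ vanish. Unlike the paper, you also justify the invariance $R_{g*}\tilde{X}_p=\tilde{X}_{pg}$ (horizontality plus uniqueness of lifts), which the paper only recalls, and your flow formula states the sign and direction more carefully than the paper's.
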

	\begin{proof}
		Recall that $\tilde{X}$ is $H$-invariant and the one-parameter subgroup of $\eta^\#$ is $R_{\exp(t\eta)}$. Hence,
		\[\begin{aligned}
			[\tilde{X},\eta^\#]
			&=-\frac{d}{dt}\Big|_{t=0}R_{-\exp(t\eta)*}(\tilde{X})\\
			&=-\frac{d}{dt}\Big|_{t=0}\tilde{X}\\
			&=0.
		\end{aligned}\]
	\end{proof}
	
	\begin{proposition}
		Let $(P,J)$ be a principal $H$-bundle over $(M,j)$ where $J$ is a complexion induced by a connection $A$ with curvature form $F_A$. Let $N_J$ and $N_j$ be the Nijenhius tensors of $J$ and $j$ respectively.
		Let $U,V\in \Gamma(TP)$, then
		\[N_J(U,V)=N_j(\pi_*U,\pi_*V)-(F_A^{0,2}(U,V))^\#,\]
		where $F_A^{0,2}$ is the $(0,2)$-part of the curvature $2$-form $F_A$ of $A$, given by
		\[F_A^{0,2}(\cdot,\cdot)=F_A(\cdot,\cdot)-F_A(J\cdot,J\cdot)+iF_A(J\cdot,\cdot)+iF_A(\cdot,J\cdot).\]
		\label{Nijenhius tensors of base and total space}
	\end{proposition}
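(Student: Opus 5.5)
The plan is to exploit the fact that both sides of the identity are tensorial (i.e.\ $C^\infty(P)$-bilinear) in $U,V$. The Nijenhius tensor $N_J$ is a tensor. $F_A^{0,2}$ is a $2$-form. The term $N_j(\pi_*U,\pi_*V)$ is to be read as the $A$-horizontal lift of $N_j$ evaluated on the projections, which is pointwise well defined because $N_j$ is a tensor on $M$. It therefore suffices to verify the identity on a family of vector fields that spans $T_pP$ at every point. Locally I would take horizontal lifts $\tilde X,\tilde Y$ of vector fields $X,Y$ on $M$ together with fundamental fields $\eta^\#,\xi^\#$, $\eta,\xi\in\mfh$. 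I would then treat three cases according to how many arguments are vertical. Throughout I use that $J\tilde X=\widetilde{jX}$ is again a horizontal lift and that $J\eta^\#=(i\eta)^\#$ is again fundamental, both from the definition of the induced complexion.

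\emph{Both arguments vertical.} Here $[\eta^\#,\xi^\#]=[\eta,\xi]^\#$. Because $\mfh$ is a complex Lie algebra, the bracket is $\mbc$-bilinear, so $[i\eta,\xi]=[\eta,i\xi]=i[\eta,\xi]$ and $[i\eta,i\xi]=-[\eta,\xi]$. Substituting gives $N_J(\eta^\#,\xi^\#)=\bigl([\eta,\xi]+i\cdot i[\eta,\xi]+i\cdot i[\eta,\xi]+[\eta,\xi]\bigr)^\#=0$. The right-hand side also vanishes: $\pi_*$ kills vertical vectors, and $F_A$, being horizontal by Proposition \ref{curvature 2 form formula}, vanishes on vertical arguments.

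\emph{One argument vertical, one horizontal.} By the preceding lemma, $[\tilde X,\eta^\#]=0$. The same lemma applied to $\widetilde{jX}$ and $i\eta$ kills every bracket appearing in $N_J(\tilde X,\eta^\#)$, so $N_J(\tilde X,\eta^\#)=0$. Again $\pi_*\eta^\#=0$ and $F_A$ is horizontal, so the right-hand side is $0$.

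\emph{Both arguments horizontal.} This is the main case. I would split $[\tilde X,\tilde Y]$ into horizontal and vertical parts. Its horizontal part is $\widetilde{[X,Y]}$, since $\pi_*[\tilde X,\tilde Y]=[X,Y]$. Its vertical part is $\theta([\tilde X,\tilde Y])^\#$. Because $\theta(\tilde X)=\theta(\tilde Y)=0$, the formula $F_A=\mrd\theta+\tfrac12[\theta,\theta]$ gives $F_A(\tilde X,\tilde Y)=\mrd\theta(\tilde X,\tilde Y)=-\theta([\tilde X,\tilde Y])$. Hence
\[
[\tilde X,\tilde Y]=\widetilde{[X,Y]}-\bigl(F_A(\tilde X,\tilde Y)\bigr)^\#.
\]
I would apply this to each of the four brackets in $N_J(\tilde X,\tilde Y)$, using $J\tilde Y=\widetilde{jY}$, and then apply $J$, which preserves the splitting and acts by $i$ on $\mfh$. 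The horizontal parts assemble to the horizontal lift of $[X,Y]+j[X,jY]+j[jX,Y]-[jX,jY]=N_j(X,Y)$. The vertical parts assemble to
\[
-\bigl(F_A(\tilde X,\tilde Y)+iF_A(\tilde X,J\tilde Y)+iF_A(J\tilde X,\tilde Y)-F_A(J\tilde X,J\tilde Y)\bigr)^\#=-\bigl(F_A^{0,2}(\tilde X,\tilde Y)\bigr)^\#,
\]
which is exactly the claimed formula.

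The main obstacle is bookkeeping rather than ideas. The sign in $\theta([\tilde X,\tilde Y])=-F_A(\tilde X,\tilde Y)$ depends on the convention $\mrd\theta(X,Y)=X\theta(Y)-Y\theta(X)-\theta([X,Y])$, and I would fix this convention explicitly. I would also check that the four vertical terms carry exactly the coefficients $1,\,i,\,i,\,-1$ of the stated $(0,2)$-part, and state clearly that the $N_j$ term is understood via horizontal lift.
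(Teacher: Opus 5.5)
Your proposal is correct and follows the paper's proof essentially step for step. The vertical cases are killed by $[\tilde X,\eta^\#]=0$ together with the $\mbc$-bilinearity of the bracket on $\mfh$, and the horizontal case follows from applying $[\tilde X,\tilde Y]=\widetilde{[X,Y]}-(F_A(\tilde X,\tilde Y))^\#$ to the four brackets in $N_J$. Your explicit reduction via tensoriality, and your reading of $N_j(\pi_*U,\pi_*V)$ as a horizontal lift, make precise what the paper leaves implicit.
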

	
	\vspace{-0.5cm}
	
	\begin{proof}
		If either $U$ or $V$ is vertical, then $N_J(U,V)=0$ by the previous lemma and the fact that $i_H$ commutes with the adjoint action of $H$. The right hand side of the equation also vanishes. Now suppose $U,V$ are horizontal lifts. Let $X,Y\in \Gamma(TM)$. From $[\tilde{X},\tilde{Y}]=\widetilde{[X,Y]}-(F_A(\tilde{X},\tilde{Y}))^\#$, we compute that
		\begin{align*}
			N_J(\tilde{X},\tilde{Y})
			&=[\tilde{X},\tilde{Y}]+J[J\tilde{X},\tilde{Y}]+J[\tilde{X},J\tilde{Y}]-[J\tilde{X},J\tilde{Y}]\\
			&=\widetilde{[X,Y]}+J\widetilde{[j(X),Y]}+J\widetilde{[X,j(Y)]}-\widetilde{[j(X),j(Y)]}\\
			&\ \ \ \ \  - (F_A(\tilde{X},\tilde{Y}))^\#-J(F_A(J\tilde{X},\tilde{Y}))^\#-J(F_A(\tilde{X},J\tilde{Y}))^\#+(F_A(J\tilde{X},J\tilde{Y}))^\#\\
			&=\widetilde{N_j(X,Y)}-(F_A(\tilde{X},\tilde{Y})+iF_A(J\tilde{X},Y)+iF_A(\tilde{X},J\tilde{Y})-F_A(J\tilde{X},J\tilde{Y}))^\#\\
			&=\widetilde{N_j(X,Y)}-(F_A^{0,2}(\tilde{X},\tilde{Y}))^\#.
		\end{align*}
		Here we used the facts that $J\eta^\#=(i\eta)^\#$ and $J\tilde{X}=\widetilde{j(X)}$ for $\eta\in \mfh,X\in \Gamma(TM)$.
	\end{proof}
	
	As a direct consequence, we have the important corollary below.
	\begin{corollary}[\cite{biswas}]
		Let $(M,j)$ be an almost complex manifold with an integrable almost complex structure ($N_j=0$). Let $P$ be a principal $H$-bundle equipped with a connection, and $J$ be the induced complexion. Then $(P,J)$ is integrable if and only if the $(0,2)$-part of the curvature form vanishes. 
		\label{integrability condition and F_A}
	\end{corollary}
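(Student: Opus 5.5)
The corollary follows directly from Proposition \ref{Nijenhius tensors of base and total space}. Let $A$ be the connection and $J=J_A$ the induced complexion. For all $U,V\in\Gamma(TP)$ that proposition gives
\[N_J(U,V)=N_j(\pi_*U,\pi_*V)-(F_A^{0,2}(U,V))^\#.\]
Since $N_j\equiv 0$ by hypothesis, this reduces to $N_J(U,V)=-(F_A^{0,2}(U,V))^\#$. To be careful about the first term, I read it as the horizontal lift $\widetilde{N_j(\pi_*U,\pi_*V)}$, as in the proof of the proposition. Either way it vanishes.

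For the forward direction, suppose $J$ is integrable, so $N_J\equiv 0$. Then $(F_A^{0,2}(U,V))^\#_p=0$ for every $p$ and all $U,V\in T_pP$. The map $\mfh\to V_pP$, $\eta\mapsto\eta^\#_p$, is injective because the action is free, so $F_A^{0,2}(U,V)=0$. A small point to check is that $F_A^{0,2}(U,V)$ lies in $\mfh$ and not in a complexification. With the formula in the proposition, the expression $F_A(\cdot,\cdot)-F_A(J\cdot,J\cdot)+iF_A(J\cdot,\cdot)+iF_A(\cdot,J\cdot)$ is $\mfh$-valued, with $i=i_H$ acting on $\mfh$, so injectivity of $\eta\mapsto\eta_p^\#$ applies directly.

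For the converse, suppose $F_A^{0,2}=0$. Then both terms on the right-hand side vanish, so $N_J\equiv 0$ and $J$ is integrable. By the Newlander--Nirenberg theorem (Theorem \ref{newlander-nirenberg theorem}), $P$ is then a complex manifold with $J$ as its canonical almost complex structure.

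The only step that needs care is identifying the displayed $F_A^{0,2}$ with the genuine $(0,2)$-component of $F_A$. Its vanishing should be equivalent to $F_A(Z,W)=0$ for all $Z,W\in T^{0,1}P$; this is just a matter of definitions, since the corollary is stated in terms of the expression from the proposition. To check it, I would expand $F_A(X+iJX,Y+iJY)$ complex-bilinearly, where $i$ acts on $\mfh$ through $i_H$. This gives exactly the four-term expression, up to a sign convention. Proposition \ref{characterisation of p,q forms} then identifies its vanishing with the vanishing of the $(0,2)$-part.
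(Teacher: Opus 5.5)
Your proof is correct and is essentially the paper's argument: set $N_j\equiv 0$ in Proposition~\ref{Nijenhius tensors of base and total space} to get $N_J=-(F_A^{0,2})^\#$, then use that $\eta\mapsto\eta_p^\#$ is injective, a step the paper leaves implicit. Your side remarks do no harm, but note two small points. The Newlander--Nirenberg theorem is not needed, since integrability is defined as $N_J\equiv 0$. Also, the four-term expression equals $F_A(X+iJX,Y+iJY)=4F_A(\pi^{0,1}X,\pi^{0,1}Y)$, so it differs from the genuine $(0,2)$-component by a factor of $4$ rather than a sign, which does not affect the vanishing condition.
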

	\begin{proof}
		By Proposition \ref{Nijenhius tensors of base and total space} and since $N_j\equiv 0$, we see that
		\[N_J=(F_A^{0,2})^\#.\]
		Hence, $J$ is integrable if and only if $F_A^{0,2}\equiv 0$.
	\end{proof}
	
	In the next section, we will use this condition to determine the integrability of almost complex structures on homogeneous bundles.
	
	\newpage
	
	\section{Equivariant Homogeneous Bundles}
	In this section, we study equivariant homogeneous bundles, namely principal $H$-bundles over homogeneous spaces $M=G/K$, where the action of $G$ on $M=G/K$ admits a lift to the total space of the bundle. Note that we use the symbol $G$ for the Lie group acting on the base, and the symbol $H$ for the structure group of the principal bundle. Section 4.1 introduces homogeneous spaces and presents their classification in terms of Lie groups. In Section 4.2, we define homogeneous bundles and provide some examples. Sections 4.3 and 4.4 are devoted to the study of invariant connections on homogeneous bundles. In particular, two correspondence theorems of invariant connections (Theorem \ref{Wang classification theorem} and Theorem \ref{classification of invariant connections (Gx_lambda H)}) will be established. In Section 4.5, a correspondence theorem of invariant complexions will be given. 
	
	\subsection{Homogeneous Spaces}
	In this section, we recall the definition of homogeneous spaces and explore their properties without proofs. For the following discussion, we denote the Lie group of a homogeneous space by $G$.
	\begin{definition}[Homogeneous Spaces]
		Let $G$ act smoothly on a manifold $M$ (on the left), that is, the map $G\times M\to M$ sending $(g,x)$ to $g\cdot x$ is smooth. The manifold $M$ is called a \emph{homogeneous space} for $G$ if $G$ acts transitively on $M$. We may sometimes write a homogeneous space $M$ for a Lie group $G$ as $(M,G)$.
	\end{definition}
	
	Throughout the discussion of homogeneous spaces and homogeneous bundles (which will be introduced in the next section), we choose a basepoint $x_0\in M$ and consider the isotropy subgroup $K$ of $G$ at $x_0$, 
	\[K=\mathrm{Stab}_{G}(x_0):=\{g\in G: g\cdot x_0 = x_0\}.\]
	It is a closed subgroup of $G$, hence a Lie subgroup of $G$. Let $\mfg$ and $\mfk$ be the Lie algebras of $G$ and $K$ respectively. By the orbit-stabiliser theorem for a group action, $G/K$ is bijective to the orbit space of $x_0$ under the transitive action of $G$ on $M$. Hence $G/K$ is bijective to $M$. In fact, every homogeneous space arises as a quotient $G/K$, so the classification of them reduces to the study of Lie groups.
	
	\begin{proposition}
		Let $(M,G)$ be a homogeneous space. Then $M$ is diffeomorphic to the coset manifold $G/K$. Conversely, every coset manifold $G/K$, where $K$ is a closed subgroup of $G$, is a homogeneous space where the action is given by sending $(g,g'K)\in G\times (G/K)$ to $gg'K\in G/K$. Moreover, the map $\pi:G\to G/K$ which sends $g\in G$ to its coset $gK\in G/K$ is a submersion.
		\label{homogeneous spaces classification}
	\end{proposition}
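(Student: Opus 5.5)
The plan is to use the orbit map at the basepoint and check that it descends to an equivariant diffeomorphism. Define $\phi:G\to M$ by $\phi(g)=g\cdot x_0$. It is smooth and surjective (transitivity), and $\phi(g)=\phi(g')$ exactly when $g^{-1}g'\in K$. So $\phi$ is constant on cosets and induces a $G$-equivariant bijection $\bar\phi:G/K\to M$, $\bar\phi(gK)=g\cdot x_0$.

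The first task is to put a smooth structure on $G/K$ for a closed subgroup $K$ such that $\pi:G\to G/K$ is a submersion; this gives the converse statement and the final claim. Since $K$ is closed, Cartan's closed subgroup theorem makes it an embedded Lie subgroup. Choose a complement $\mfm$ with $\mfg=\mfk\oplus\mfm$. Then the map
\[
\mfm\times K\to G,\qquad (X,k)\mapsto \exp(X)k,
\]
is a local diffeomorphism near $(0,e)$. Using closedness of $K$, shrink to a neighbourhood $U\subseteq\mfm$ of $0$ on which $X\mapsto \exp(X)K$ is injective. Translating by $g$ gives charts $X\mapsto g\exp(X)K$. Transition maps are smooth because they are compositions of smooth group operations with the local inverse above. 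Hausdorffness follows from closedness of $K$. In these charts $\pi$ is locally the projection $\mfm\times K\to\mfm$, so it is a submersion. The left action $(g,g'K)\mapsto gg'K$ is smooth, since it is induced from multiplication on $G$ and $\pi\times\pi$ is a submersion. It is clearly transitive.

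For the first statement, $\bar\phi$ is smooth because $\bar\phi\circ\pi=\phi$ is smooth and $\pi$ is a surjective submersion. It remains to show $\bar\phi$ is a diffeomorphism. By equivariance, $\bar\phi$ has constant rank. Its kernel at $eK$ is trivial: if $X\in\mfg$ has $d\phi_e(X)=0$, then $\exp(tX)\cdot x_0$ is constant (compute the derivative at every $t$ via equivariance), so $\exp(tX)\in K$ for all $t$, hence $X\in\mfk$. Therefore $\bar\phi$ is an injective immersion of constant rank.

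The main obstacle is surjectivity of the differential, that is, $\dim G/K=\dim M$. Here I would invoke the global rank theorem together with a Baire category argument, assuming $G$ is second countable. If $\operatorname{rank}\bar\phi<\dim M$, then $\bar\phi(G/K)$ would be a countable union of images of compact sets under a constant-rank map of lower rank. Each such image is nowhere dense, which contradicts surjectivity onto $M$. Hence $\bar\phi$ is a bijective local diffeomorphism, and so a diffeomorphism.
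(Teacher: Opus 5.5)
The paper gives no proof of this proposition. It sits in the background subsection on homogeneous spaces, whose properties are stated ``without proofs'', and the only supporting text is the orbit--stabiliser remark that $G/K$ is in bijection with $M$. Your proposal is correct and is the standard argument, as in Lee or Warner:
\begin{itemize}
\item The slice map $(X,k)\mapsto \exp(X)k$ on $\mfm\times K$ gives the manifold structure on $G/K$ and shows $\pi$ is locally the projection onto $\mfm$.
\item Equivariance makes $\bar\phi$ have constant rank.
\item The computation $\ker d\phi_e=\mfk$ makes $\bar\phi$ an injective immersion.
\item A Baire argument (Sard would work equally well) forces $\dim G/K=\dim M$.
\end{itemize}
Your route supplies exactly the content the paper's bijection remark skips: the smooth structure on $G/K$, and the dimension count that turns a smooth bijection into a diffeomorphism. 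The dimension count is where second countability of $G$ is genuinely needed, a hypothesis the paper uses silently and you rightly flag. For example, $\mbr$ with the discrete topology acting on $\mbr$ by translations gives a smooth equivariant bijection $G/K\to M$ that is not a diffeomorphism.

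A few details to tidy:
\begin{itemize}
\item Smoothness of the action $G\times G/K\to G/K$ should be deduced from the surjective submersion $\mathrm{id}_G\times\pi\colon G\times G\to G\times G/K$, not from $\pi\times\pi$.
\item Before applying your construction to the isotropy group, note that $K=\phi^{-1}(x_0)$ is closed.
\item For smoothness of the transition maps, right-translate by a suitable $k_0\in K$, so that $h^{-1}g\exp(X)k_0^{-1}$ lies in the domain where the local inverse of $(Y,k)\mapsto\exp(Y)k$ is defined.
\item When deducing $\ker d\bar\phi_{eK}=0$ from $\ker d\phi_e=\mfk$, you implicitly use that $d\pi_e$ is surjective with kernel $\mfk$. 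Your chart description gives this, but it is worth saying.
\end{itemize}
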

	
	Therefore, we may, when convenient, identify $M$ with $G/K$. Denote by $r_{x_0}:G\to M$ the map that sends $g\in G$ to $gx_0\in M$.
	
	\begin{proposition}
		Let $(M,G)$ be a homogeneous space.
		\begin{enumerate}
			\item The kernel of $(r_{x_0})_{*,e}:\mfg\to T_{x_0}M$ is $\mfk$, the Lie algebra of $K$.
			\item The map $r_{x_0*}:\mfg\to T_{x_0} M$ restricts to an isomorphism $(r_{x_0})_{*,e}:\mfg/\mfk \to T_{x_0}M$. In particular,
			\[T_{x_0}(G/K)\cong\mfg/\mfk.\]
		\end{enumerate}
	\end{proposition}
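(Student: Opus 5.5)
The plan is to work entirely with the smooth map $r_{x_0}:G\to M$, $g\mapsto gx_0$, and to identify it with the quotient $\pi:G\to G/K$ using Proposition \ref{homogeneous spaces classification}. Concretely, let $\phi:G/K\to M$, $gK\mapsto gx_0$, be the diffeomorphism provided there. Then $r_{x_0}=\phi\circ\pi$. Since $\pi$ is a submersion and $\phi$ is a diffeomorphism, $r_{x_0}$ is a submersion. In particular, $(r_{x_0})_{*,e}:\mfg\to T_{x_0}M$ is surjective.

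For (1), first show $\mfk\subseteq\ker(r_{x_0})_{*,e}$. For $\xi\in\mfk$, the curve $\exp(t\xi)$ lies in $K$, so $r_{x_0}(\exp(t\xi))=x_0$ is constant. Differentiating at $t=0$ gives $(r_{x_0})_{*,e}\xi=0$.

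For the reverse inclusion, I would argue by dimension rather than by a direct curve argument. This is the step needing the most care, because a vector in the kernel need not obviously integrate to a curve in $K$.
\begin{itemize}
\item Since $\pi$ is a submersion whose fibres are the cosets $gK$, we have $\dim G/K=\dim G-\dim K$.
\item Hence $\dim M=\dim\mfg-\dim\mfk$.
\item Surjectivity of $(r_{x_0})_{*,e}$ and rank--nullity give $\dim\ker(r_{x_0})_{*,e}=\dim\mfg-\dim M=\dim\mfk$.
\item Together with $\mfk\subseteq\ker(r_{x_0})_{*,e}$, this forces equality.
\end{itemize}
An alternative route would be to note that the fibre $r_{x_0}^{-1}(x_0)=K$ is an embedded submanifold by the regular value theorem. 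Its tangent space at $e$ is then the kernel of the differential, and that tangent space equals $\mfk$. The dimension count avoids having to compare the two submanifold structures on $K$, so I would use it.

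For (2), interpret the statement as saying that $(r_{x_0})_{*,e}$ descends to $\mfg/\mfk$. By (1), it factors through a linear map $\overline{(r_{x_0})_{*,e}}:\mfg/\mfk\to T_{x_0}M$. This map is injective because the kernel is exactly $\mfk$, and it is surjective by the submersion property established above. It is therefore a linear isomorphism. Transporting along $\phi$ gives $T_{x_0}(G/K)\cong\mfg/\mfk$; equivalently, $\pi_{*,e}$ itself induces this isomorphism onto $T_{eK}(G/K)$.
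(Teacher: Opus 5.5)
The paper gives no proof of this proposition to compare against: it is recalled in Section 4.1 as a standard fact, and that section says its properties are stated without proofs. Your argument is correct. With $\phi(gK)=gx_0$ the identification behind Proposition~\ref{homogeneous spaces classification}, the map $r_{x_0}=\phi\circ\pi$ is a submersion. You get $\mfk\subseteq\ker(r_{x_0})_{*,e}$ because $\exp(t\xi)\in K$, and rank--nullity with $\dim M=\dim G-\dim K$ forces equality. Part (2) then follows at once.

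One remark on the route. Your concern that a kernel vector need not integrate to a curve in $K$, which led you to the dimension count, is resolved by equivariance. We have $r_{x_0}(gg')=g\cdot r_{x_0}(g')$. Write $L_g:M\to M$ for $x\mapsto g\cdot x$. Then for $\xi\in\ker(r_{x_0})_{*,e}$ and every $t_0\in\mbr$,
\[
\frac{\mrd}{\mrd t}\Big|_{t=t_0} r_{x_0}(\exp t\xi)
=\frac{\mrd}{\mrd s}\Big|_{s=0}\exp(t_0\xi)\cdot r_{x_0}(\exp s\xi)
=(L_{\exp t_0\xi})_{*}\,(r_{x_0})_{*,e}\,\xi=0.
\]
So $\exp(t\xi)\cdot x_0=x_0$ for all $t$, that is, $\exp(t\xi)\in K$, and $\xi\in\mfk$ because $K$ is closed. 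This proves (1) using only that $K$ is a closed subgroup, independently of the smooth structure on $G/K$ and of Proposition~\ref{homogeneous spaces classification}. Only surjectivity in (2) then needs that proposition, or alternatively the fact that the $G$-equivariant map $r_{x_0}$ has constant rank and is onto.

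Relatedly, your step $\dim G/K=\dim G-\dim K$, justified via the fibres of $\pi$, already uses that the level set $\pi^{-1}(eK)$ and the Lie group $K$ have the same dimension. So it does not really avoid comparing the two submanifold structures on $K$. This is harmless, since embedded structures on a given subset are unique, and that uniqueness would give $\ker\pi_{*,e}=T_eK=\mfk$ directly. Still, citing the dimension formula from the construction of $G/K$ is cleaner.
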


	\subsection{Equivariant Homogeneous Bundles}
	We begin by defining equivariant and equivariant homogeneous bundles.
	\begin{definition}[Equivariant Bundles]
		Let $\pi:P\to M$ be a principal $H$-bundle over $M$ ($H$ is not necessarily complex). Suppose $G$ acts on $P$ on the left. The bundle $P$ is called an equivariant bundle if
		\begin{enumerate}
			\item $\pi(gp)=g\pi(p)$,
			\item $g(ph)=(gp)h.$
		\end{enumerate}
		If both conditions are satisfied, we say that the action of $G$ on $M$ can be lifted to an action of $G$ on $P$.
	\end{definition}
	The first condition says that $p\in P$ upon action of an element $g\in G$ is projected to the same basepoint of $p$ but translated by $g$. In particular, if $k\in K$ and $\pi(p)=x_0$, then $kp\in P_{x_0}$. Therefore, the Lie group $K$ acts on $P_{x_0}$.
	
	\begin{definition}[Equivariant Homogeneous Bundles]
		An equivariant homogeneous bundle (or simply homogeneous bundle) is an equivariant bundle over a homogeneous space.
	\end{definition}

	\begin{example}[Frame Bundles]
		Let $M$ be a homogeneous space for a Lie group $G$. Let $E\to M$ be a vector bundle and $\mathrm{Fr}(E)$ its frame bundle. Denote by $L_g:M\to M$ the map sending $x\in M$ to $g\cdot x\in M$. The action of $G$ on $M$ induces an action of $G$  on $\Gamma(E)$, by mapping $X$ to $L_{g*}(X)$. Therefore, it induces an action of $G$ on $\mathrm{Fr}(E)$ by
		\begin{align*}
			G\times \mathrm{Fr(E)} &\to \mathrm{Fr}(E):\\
			(g,(x,[v_1,\cdots,v_r]))&\mapsto (g\cdot x,[(L_{g*})v_1,\cdots (L_{g*})v_r]).
		\end{align*}
		The frame bundle $\mathrm{Fr}(E)$ is then a homogeneous bundle over $M$. For instance, 
		\begin{align*}
			\pi(g\cdot(x,[v_1,\cdots,v_r]))&=\pi(g\cdot x,[(L_{g*})v_1,\cdots (L_{g*})v_r])\\
			&=g\cdot x\\
			&=g\cdot \pi(x,[v_1,\cdots,v_r]).
		\end{align*}
	\end{example}
	
	\begin{example}[Homogeneous Spaces]
		Let $M$ be a homogeneous space for a Lie group $G$. It follows that $G\to M$ is a principal $G$-bundle. We have a natural action of $G$ on $G$ by left multiplication. Consequently, $G\to M$ is a homogeneous bundle.
	\end{example}
	
	Similar to the classification of homogeneous spaces as coset manifolds, our goal now is to classify homogeneous bundles in terms of their Lie groups. For this purpose, we introduce the following bundle.
	
	\begin{proposition}
		Let $G,H$ be two Lie groups and $M=G/K$ a homogeneous space, where $K$ is a closed subgroup of $G$. Let $\lambda:K\to H$ be a (Lie group) homomorphism. Define the set $G\times_\lambda H$ by 
		\[G\times_\lambda H:=\{[g,h]_\lambda=[gk^{-1},\lambda(k)h]_\lambda: g\in G,h \in H,k \in K\}.\]
		Define the action of $G$ on $G\times_\lambda H$ by 
		\[g'\cdot[g,h]_\lambda:=[g'g,h]_\lambda, \quad g,g'\in G, h\in H,\]
		and the action of $H$ by 
		\[[g,h]_\lambda\cdot h':=[g,hh'],\quad g\in G, h,h'\in H.\]
		Define the projection map $\pi_\lambda:G\times_\lambda H\to M$ by
		\[\pi_\lambda([g,h]_\lambda):=gK.\]
		Then $G\times_\lambda H$ has a manifold structure such that $\pi_\lambda:G\times_\lambda H\to M$ is a principal $H$-bundle over the homogeneous space $M$. Moreover, $G\times_\lambda H$ is a homogeneous bundle.
	\end{proposition}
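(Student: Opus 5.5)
We realise $G\times_\lambda H$ as the quotient of $G\times H$ by the free right action of $K$ given by $(g,h)\cdot k:=(gk,\lambda(k)^{-1}h)$. Its orbits are exactly the classes $[g,h]_\lambda=[gk^{-1},\lambda(k)h]_\lambda$. The plan is to identify this quotient with the associated bundle $G\times_K H$, where $K$ acts on $H$ on the left via $\lambda$, and then read off all the stated properties.

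\textbf{Step 1 (manifold structure).} We view $G\to G/K$ as a principal $K$-bundle, using Proposition \ref{homogeneous spaces classification}: $\pi$ is a submersion, and local sections exist because $G\to G/K$ admits local sections. Choose an open cover $\{U_\alpha\}$ of $M$ with local sections $s_\alpha:U_\alpha\to G$, and define
\[\phi_\alpha:U_\alpha\times H\to \pi_\lambda^{-1}(U_\alpha),\qquad (x,h)\mapsto [s_\alpha(x),h]_\lambda.\]
This map is a bijection. It is surjective because any $[g,h]_\lambda$ with $gK=x$ can be written with $g=s_\alpha(x)k$, so that $[g,h]_\lambda=[s_\alpha(x),\lambda(k)h]_\lambda$. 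It is injective because the $K$-action on $G$ is free.

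On overlaps, write $s_\beta=s_\alpha k_{\alpha\beta}$ with $k_{\alpha\beta}:U_{\alpha\beta}\to K$ smooth; smoothness holds since $g\mapsto (gK, s(gK)^{-1}g)$ is smooth. The transition maps are then
\[(x,h)\mapsto (x,\lambda(k_{\alpha\beta}(x))h),\]
which are diffeomorphisms because $\lambda$ is a Lie group homomorphism and hence smooth. We topologise and smooth-structure $G\times_\lambda H$ so that each $\phi_\alpha$ is a diffeomorphism onto an open set. Hausdorffness and second countability are inherited from $M\times H$ locally, together with the fact that $\pi_\lambda$ is continuous onto a Hausdorff space. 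Equivalently, and more cleanly, one can invoke the standard fact that the quotient of a manifold by a free proper action of a Lie group is a manifold. Properness here follows from properness of the $K$-action on $G$, which holds because $K$ is closed.

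\textbf{Step 2 (principal $H$-bundle).} We first check that the right $H$-action $[g,h]_\lambda\cdot h'=[g,hh']_\lambda$ is well defined. This holds because left multiplication by $\lambda(k)$ commutes with right multiplication by $h'$. In the charts $\phi_\alpha$ the action becomes $(x,h)\cdot h'=(x,hh')$, which is smooth, free, and transitive on fibres. The projection $\pi_\lambda$ is well defined since $gk^{-1}K=gK$, and in charts it is the projection $U_\alpha\times H\to U_\alpha$. Hence the $\phi_\alpha$ are $H$-equivariant local trivialisations.

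\textbf{Step 3 (homogeneous bundle).} The left action $g'\cdot[g,h]_\lambda=[g'g,h]_\lambda$ is well defined because left multiplication on $G$ commutes with the right $K$-action. It is smooth because it descends from the smooth action on $G\times H$ through the quotient map, which is a surjective submersion. The two equivariance conditions are immediate:
\[\pi_\lambda(g'[g,h]_\lambda)=g'gK=g'\pi_\lambda([g,h]_\lambda),\qquad g'([g,h]_\lambda h')=[g'g,hh']_\lambda=(g'[g,h]_\lambda)h'.\]
Since $M=G/K$ is homogeneous, $G\times_\lambda H$ is a homogeneous bundle.

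The main obstacle is Step 1. Making the topology and smooth structure rigorous, in particular Hausdorffness and smoothness of the transition functions $k_{\alpha\beta}$, relies on the existence of smooth local sections of $G\to G/K$. I would cite this, or the quotient manifold theorem for free proper actions, rather than reprove it.
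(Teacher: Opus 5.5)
Your proof is correct and takes the same route as the paper: a local section $\sigma$ of $G\to G/K$ gives the local section $[\sigma(x),e]_\lambda$, or equivalently your trivialisation $(x,h)\mapsto[\sigma(x),h]_\lambda$, and homogeneity reduces to the equivariance check. You also supply what the paper leaves out: it omits the construction of the smooth structure and checks only $\pi_\lambda(g'p)=g'\pi_\lambda(p)$, whereas you build the atlas from the transition maps $(x,h)\mapsto(x,\lambda(k_{\alpha\beta}(x))h)$ and verify well-definedness and both equivariance conditions.
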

	
	\begin{proof}
		We omit the proof of existence of the manifold structure. By Proposition \ref{local trivialisation and local section}, it suffices to find a local section of $G\times_\lambda H$. Since $G\to G/K$ is a principal $K$-bundle, we have a collection of local sections of $G\to G/K$. Let $\sigma:U \to G$ be a local section of $G\to G/K$. Define the local section $s^\sigma:U\to G\times_\lambda H$ of $G\times_\lambda H$ by $s^\sigma(x):=[\sigma(x),e]_{\lambda}$ where $x\in U$ and $e$ is the identity element of $H$. 
		
		To check that $G\times_\lambda H$ is a homogeneous bundle, note that $\pi_{\lambda}(g'\cdot[g,h]_\lambda)=g'gK=g'\pi_\lambda([g,h]_\lambda)$ for $g,g'\in G$ and $h\in H$.
	\end{proof}
	
	\begin{proposition}
		Let $\lambda:K\to H$ and $\lambda':K\to H$ be two homomorphisms. Then $G\times_\lambda H$ is isomorphic to $G\times_{\lambda'}H$ if and only if $\lambda'=c_h\circ \lambda$ for some $h\in H$, where $c_h:H\to H$ is the conjugation map by $h$.
		\label{isomorphism class of G times H by conjugation}
	\end{proposition}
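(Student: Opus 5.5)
The statement is only meaningful once we fix what ``isomorphic'' means. I will read it in the category of equivariant homogeneous bundles: an isomorphism $F:G\times_\lambda H\to G\times_{\lambda'}H$ is a diffeomorphism covering the identity of $M=G/K$ that is both $H$-equivariant, $F(p h')=F(p)h'$, and $G$-equivariant, $F(g p)=gF(p)$. The proof then reduces to tracking the single point $[e,e]_\lambda$ over the base point $eK$. The two implications are handled separately.

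\textbf{Sufficiency.} Suppose $\lambda'(k)=h\lambda(k)h^{-1}$ for all $k\in K$. I would define
\[F([g,h']_\lambda):=[g,hh']_{\lambda'}.\]
The first step is to check that $F$ is well defined. Replacing the representative $(g,h')$ by $(gk^{-1},\lambda(k)h')$ gives
\[[gk^{-1},h\lambda(k)h']_{\lambda'}=[gk^{-1},\lambda'(k)hh']_{\lambda'}=[g,hh']_{\lambda'},\]
which is the same point.

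The remaining properties are then immediate:
\begin{itemize}
\item $F$ covers the identity, since both sides project to $gK$.
\item $F$ is $G$-equivariant, since $F$ leaves the first entry unchanged.
\item $F$ is $H$-equivariant, since left multiplication by $h$ commutes with right multiplication by $h'$.
\item The map $[g,h']_{\lambda'}\mapsto[g,h^{-1}h']_{\lambda}$ is a two-sided inverse, by the same computation with $h^{-1}$.
\end{itemize}
For smoothness, I would use the local sections $s^\sigma(x)=[\sigma(x),e]_\lambda$ from the preceding proposition. In the induced local trivialisations $U\times H$ of the two bundles, $F$ becomes $(x,h')\mapsto(x,hh')$, which is clearly smooth, and the same holds for the inverse.

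\textbf{Necessity.} Let $F$ be an isomorphism. The point $F([e,e]_\lambda)$ lies in the fibre of $G\times_{\lambda'}H$ over $eK$. Every point of that fibre has the form $[k,h'']_{\lambda'}=[e,\lambda'(k)h'']_{\lambda'}$ with $k\in K$, so $F([e,e]_\lambda)=[e,h]_{\lambda'}$ for some $h\in H$.

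Now fix $k\in K$. In $G\times_\lambda H$ we have the identity $k\cdot[e,e]_\lambda=[k,e]_\lambda=[e,\lambda(k)]_\lambda=[e,e]_\lambda\cdot\lambda(k)$. Applying $F$ and using $G$-equivariance on the left-hand side gives
\[k\cdot[e,h]_{\lambda'}=[k,h]_{\lambda'}=[e,\lambda'(k)h]_{\lambda'}.\]
Applying $F$ and using $H$-equivariance on the right-hand side gives
\[[e,h]_{\lambda'}\cdot\lambda(k)=[e,h\lambda(k)]_{\lambda'}.\]
The right $H$-action on a fibre of a principal bundle is free. Two elements $[e,a]_{\lambda'}$ and $[e,b]_{\lambda'}$ are equal exactly when $a=\lambda'(k_0)b$ for some $k_0\in K$ with $e k_0^{-1}=e$, i.e.\ $k_0=e$, so only when $a=b$. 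Hence $\lambda'(k)h=h\lambda(k)$ for every $k\in K$, that is, $\lambda'=c_h\circ\lambda$.

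The main obstacle is not computational but definitional. Without $G$-equivariance the statement is false: for example, every principal bundle over a contractible $G/K$ is trivial, yet distinct $\lambda$ need not be conjugate. I would therefore state explicitly that isomorphism means isomorphism of equivariant homogeneous bundles. I would also note where $G$-equivariance enters: it is precisely what converts the left action of $k$ into information about $\lambda'(k)$.
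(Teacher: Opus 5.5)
Your proof is correct and follows essentially the same route as the paper. Sufficiency uses the same map $[g,h']_\lambda\mapsto[g,hh']_{\lambda'}$, and for necessity you compare $F$ on the two representatives $[k,e]_\lambda=[e,\lambda(k)]_\lambda$, which is the paper's well-definedness computation specialised to the base point. Your explicit requirement that $F$ be $G$-equivariant is well placed: the paper's step asserting $F([g,h']_\lambda)=[g,hh']_{\lambda'}$ with a single $h$ for all $g$ silently uses it, and your contractible-base example shows the statement fails for mere principal-bundle isomorphisms.
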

	
	\begin{proof}
		Let $P=G\times_\lambda H,P'=G\times_{\lambda'}H$ and $\lambda'=c_h\circ\lambda$ for some $h\in H$. Define the map $F^h:P\to P'$ by
		\[F^h[g,h']_{\lambda}:=[g,hh']_{\lambda'}.\]
		This is well defined since $[gk^{-1},h(\lambda(k)h')]_{\lambda'}=[gk^{-1},\lambda'(k)hh']_{\lambda'}=[g,hh']_{\lambda'}$ for all $k\in K$. One may check that this is an $\operatorname{id}_M$-covering bundle map and compatible with the group action. Clearly it is bijective. Therefore, $P$ and $P'$ are isomorphic.
		
		Conversely, suppose that $F:P\to P'$ is a bundle isomorphism. Since $F$ is fibre-preserving and compatible with the group action, i.e., for all $g,h'\in H$, $F([g,h']_\lambda h)=F([g,h']_\lambda)h$, we see that $F([g,h']_\lambda)=[g,hh']_{\lambda'}$ for some $h\in H$. Moreover, since $F$ is well-defined, we get 
		\begin{align*}
			[g,hh']_{\lambda'}
			&=F([g,h']_\lambda)\\
			&=F([gk^{-1},\lambda(k)h']_\lambda)\\
			&=[g,\lambda'(k)^{-1}h\lambda(k)h']_{\lambda'}.
		\end{align*}
		for all $k\in K$. This shows that $\lambda'=c_h\circ \lambda$.
	\end{proof}

	\begin{theorem}[Isomorphism Classes of Homogeneous $H$-bundles]
		Let $\pi:P\to M$ be a homogeneous $H$-bundle over $M$. Fix $p_0\in \pi^{-1}(x_0)$.
		\begin{enumerate}
			\item There exists a homomorphism $\lambda:K\to H$ such that
			\[p_0\lambda(k)=kp_0,\, \text{ for all } k \in K.\]
			\item We have a bundle isomorphism 
			\begin{align*}
				\Phi:\,&G\times_\lambda H \to P\\
				&[g,h]_\lambda \hspace{0.3cm}\mapsto gp_0h.
			\end{align*}
			Consequently, every homogeneous $H$-bundle is isomorphic to a bundle of the form $G\times_\lambda H$ for some homomorphism $\lambda:K \to H$.
		\end{enumerate}
		\label{classification of homogeneous bundles}
	\end{theorem}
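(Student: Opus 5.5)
For part (1), I would use the fact that $K$ preserves the fibre $P_{x_0}$ and that $H$ acts simply transitively on each fibre. Given $k\in K$, the first equivariance condition gives $\pi(kp_0)=k x_0=x_0$, so $kp_0\in P_{x_0}$. There is therefore a unique element $\lambda(k)\in H$ with $kp_0=p_0\lambda(k)$.

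To see that $\lambda$ is a homomorphism, I would use the commutation $g(ph)=(gp)h$:
\[p_0\lambda(kk')=(kk')p_0=k(p_0\lambda(k'))=(kp_0)\lambda(k')=p_0\lambda(k)\lambda(k'),\]
and freeness of the $H$-action gives $\lambda(kk')=\lambda(k)\lambda(k')$. For smoothness, I would pick a local trivialisation $P|_U\cong U\times H$ around $x_0$. In it $\lambda(k)$ is the $H$-component of $kp_0$ multiplied on the left by the inverse of the $H$-component of $p_0$, which is a smooth function of $k$ since the $G$-action is smooth. (Alternatively, a continuous homomorphism between Lie groups is automatically smooth.)

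For part (2), I would first check that $\Phi$ is well defined:
\[\Phi([gk^{-1},\lambda(k)h]_\lambda)=gk^{-1}p_0\lambda(k)h.\]
From $kp_0=p_0\lambda(k)$ we get $k^{-1}p_0=p_0\lambda(k)^{-1}$, so this equals $gp_0\lambda(k)^{-1}\lambda(k)h=gp_0h$. Next I would verify the structural properties:
\begin{enumerate}
\item $\Phi$ covers the identity, since $\pi(gp_0h)=gx_0$, identified with $gK$.
\item $\Phi$ is $H$-equivariant, since $\Phi([g,h]_\lambda h')=gp_0hh'$.
\item $\Phi$ is $G$-equivariant.
\end{enumerate}

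\textbf{Surjectivity.} Given $p\in P$ with $\pi(p)=gx_0$, the point $g^{-1}p$ lies in $P_{x_0}$, so $g^{-1}p=p_0h$ for some $h\in H$ and $p=gp_0h$.

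\textbf{Injectivity.} Suppose $gp_0h=g'p_0h'$. Projecting gives $g^{-1}g'=:k\in K$. Then $gp_0h=gkp_0h'=gp_0\lambda(k)h'$, so $h=\lambda(k)h'$, and $[g',h']_\lambda=[gk,\lambda(k)^{-1}h]_\lambda=[g,h]_\lambda$.

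The step needing most care is smoothness of $\Phi$ and of its inverse. Smoothness of $\Phi$ follows by composing with the local sections $s^\sigma(x)=[\sigma(x),e]_\lambda$ from the construction of $G\times_\lambda H$. In this local trivialisation $\Phi(x,h)=\sigma(x)p_0h$, which is smooth.

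For the inverse, I would note that $\Phi$ is a bijective, $H$-equivariant, smooth map covering $\mathrm{id}_M$ between principal $H$-bundles. Such a map is a morphism of principal bundles, hence locally of the form $(x,h)\mapsto(x,\phi(x)h)$ with $\phi$ smooth, and is therefore automatically a diffeomorphism with smooth inverse $(x,h)\mapsto(x,\phi(x)^{-1}h)$. This completes the bundle isomorphism, and the final "consequently" statement follows immediately.
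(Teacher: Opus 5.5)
Your proof is correct and follows the same route as the paper: you define $\lambda(k)$ by simple transitivity of $H$ on $P_{x_0}$, get the homomorphism property from freeness, check that $\Phi$ is well defined using $k^{-1}p_0=p_0\lambda(k)^{-1}$, and invert $\Phi$ using transitivity of $G$ on $M$. Your injectivity argument and your smoothness checks for $\lambda$, $\Phi$ and $\Phi^{-1}$ fill in details the paper leaves implicit; in particular, the paper states its formula for $\Phi^{-1}$ without checking independence of the choice of $g$, and your injectivity step supplies exactly that.
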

	
	\begin{proof}~
		\begin{enumerate}
			\item Let $k\in K$. Recall that $K$ acts on $P_{x_0}$. Hence, there exists a unique $h_k\in H$ such that $kp_0=p_0h_k$. We define $\lambda: K\to H$ by sending $k$ to $h_k$. To show $\lambda$ is a homomorphism, let $k_1,k_2\in K$. By the definition of $\lambda$, \[p_0\lambda(k_1k_2)=k_1k_2p_0=k_1(p_0\lambda(k_2))=p_0\lambda(k_1)\lambda(k_2).\]
			Since the action of $H$ is free, $\lambda(k_1k_2)=\lambda(k_1)\lambda(k_2).$
			\item We first show $\Phi$ is well-defined. This follows from the computation 
			\begin{align*}
				\Phi([gk,\lambda(k)^{-1}h]_\lambda)&=g(kp_0)\lambda(k)^{-1}h\\
				&=g(p_0\lambda(k))\lambda(k)^{-1}h\\
				&=gp_0h.
			\end{align*}
			
			Now we find the inverse of $\Phi$. Let $p\in P$. Since the $G$-action on $M$ is transitive, there exists $g\in G$ such that $\pi(p)=gx_0$. It follows that $g^{-1}p\in P_{x_0}$. Thus, there exists $h\in H$ such that $g^{-1}p=p_0 h$.
			The inverse of $\Phi$ is then given by $\Phi^{-1}(p)=[g,h]_\lambda$.
			Finally, to show $\Phi$ is a bundle morphism, note that \[\pi(\Phi([g,h]_\lambda))=\pi(gp_0h)=g\pi(p_0)=\pi_\lambda([g,h]_\lambda),\]
			where $\pi_\lambda:G\times_\lambda H\to M$ is the projection map for the bundle $G\times_\lambda H$.
		\end{enumerate}
	\end{proof}
	
	Therefore, in order to study homogeneous bundles, it suffices to study those of the form $G\times_\lambda H$ for some homomorphism $\lambda:K\to H$. Moreover, we have a complete classification of homogeneous bundles.
	
	\begin{definition}
		Let $\mathcal{M}=\mathrm{Hom}(K,H)/\sim_H$, where two homomorphisms are related if and only if they are a conjugation of each other by an element of $H$, and $\mathrm{Hom}(K,H)$ is the set of all (Lie group) homomorphisms from $K$ to $H$.
	\end{definition}
	
	\begin{corollary}
		There is a one-to-one correspondence between the set of isomorphism classes of homogeneous $H$-bundles over $M=G/K$ and the set of equivalence classes $\mathcal{M}$.
	\end{corollary}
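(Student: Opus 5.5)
We need a bijection between isomorphism classes of homogeneous $H$-bundles over $M=G/K$ and $\mathcal{M}=\mathrm{Hom}(K,H)/\sim_H$. We define it from $\mathcal{M}$ to bundles by $[\lambda]\mapsto [G\times_\lambda H]$ and check, in order, that it is well defined, surjective, and injective, using only Proposition \ref{isomorphism class of G times H by conjugation} and Theorem \ref{classification of homogeneous bundles}.

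First, well-definedness. If $\lambda'=c_h\circ\lambda$ for some $h\in H$, then the ``if'' direction of Proposition \ref{isomorphism class of G times H by conjugation} gives the explicit isomorphism $F^h:G\times_\lambda H\to G\times_{\lambda'}H$, $[g,h']_\lambda\mapsto[g,hh']_{\lambda'}$. We should also note that $F^h$ commutes with the left $G$-action, since $F^h(g'[g,h']_\lambda)=[g'g,hh']_{\lambda'}=g'F^h([g,h']_\lambda)$. So it is an isomorphism of homogeneous bundles, not merely of principal bundles. Hence the class of $G\times_\lambda H$ depends only on $[\lambda]$.

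Second, surjectivity. This is exactly Theorem \ref{classification of homogeneous bundles}: every homogeneous $H$-bundle $P$ is isomorphic to $G\times_\lambda H$, where $\lambda$ is determined by a choice of $p_0\in P_{x_0}$. The map $\Phi([g,h]_\lambda)=gp_0h$ is visibly $G$-equivariant and $H$-equivariant, so it is an isomorphism of homogeneous bundles.

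Third, injectivity, which is the main point. Suppose $G\times_\lambda H\cong G\times_{\lambda'}H$ as homogeneous bundles. The ``only if'' direction of Proposition \ref{isomorphism class of G times H by conjugation} yields $\lambda'=c_h\circ\lambda$, so $[\lambda]=[\lambda']$. The step I expect to need care is the input to that direction. Its proof asserts $F([g,h']_\lambda)=[g,hh']_{\lambda'}$ with a single $h$ independent of $g$, and this requires $F$ to be $G$-equivariant, covering $\mathrm{id}_M$, and $H$-equivariant. I would make this explicit. Set $F([e,e]_\lambda)=[e,h]_{\lambda'}$, which is possible because $F$ preserves the fibre over $x_0=eK$. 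Then
\[F([g,h']_\lambda)=F(g[e,e]_\lambda h')=g[e,h]_{\lambda'}h'=[g,hh']_{\lambda'}.\]
Applying this to both sides of $[k^{-1},\lambda(k)]_\lambda=[e,e]_\lambda$ gives $[e,\lambda'(k)^{-1}h\lambda(k)]_{\lambda'}=[e,h]_{\lambda'}$. This forces $\lambda'(k)=h\lambda(k)h^{-1}$, as required.

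An alternative route to injectivity is more intrinsic: changing the base point $p_0$ to $p_0h$ in Theorem \ref{classification of homogeneous bundles} replaces $\lambda$ by $c_{h^{-1}}\circ\lambda$, and any isomorphism $P\to P'$ carries base points to base points. Either argument completes the bijection.
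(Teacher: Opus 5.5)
Your proof is correct and follows the paper's route: the corollary is just Theorem \ref{classification of homogeneous bundles} (every homogeneous bundle is some $G\times_\lambda H$) combined with Proposition \ref{isomorphism class of G times H by conjugation} ($G\times_\lambda H\cong G\times_{\lambda'}H$ exactly when $\lambda'=c_h\circ\lambda$). Your explicit use of $G$-equivariance of the isomorphism to get a single $h$ independent of $g$ is correct, and it makes precise a step the paper's proof of that proposition leaves implicit; without it the statement would fail, e.g.\ over a point.
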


	A central focus of the following discussion is on invariant connections and complexions. Therefore, we end this section by giving the definition of invariance. The group actions $G$ on $P$ and on $M$ naturally induce actions on their respective vector-valued differential forms by pullbacks. For instance, let $V$ be a finite-dimensional vector space. The action $G\times P\to P$ induces an action $G\times \Omega^k(P;V)\to \Omega^k(P;V)$ by
	\[(g,\omega)\mapsto L_g^*\omega,\]
	where $L_g:P\to P$ is the left multiplication by the element $g$.

	\begin{definition}[Invariant Forms]
		Let $V$ be a finite-dimensional vector space. Let $\omega\in \Omega^k(P;V)$. It is called a $G$-invariant (or simply invariant) form if $L_g^*\omega=\omega$ for all $g\in G$.
	\end{definition}
	
	\begin{remark}
		A connection one-form on a principal $H$-bundle $P$ is a $\mfh$-valued one-form. Hence, an invariant connection is a connection one-form that is invariant. Equivalently, if $A$ is the corresponding horizontal distribution, the invariance of the connection form means that $L_{g*}A=A$ for all $g\in G$. 
	\end{remark}
	
	The group action $G$ on $P$ and on $M$ also naturally induces an action on their respective endomorphism bundles $\mathrm{End}(TP)$ and $\mathrm{End}(TM)$. For instance, the action of $G$ on $P$ induces an action $G\times\mathrm{End}(TP)\to \mathrm{End}(TP)$ by 
	\[(g,A)\mapsto L_{g*}\circ A \circ L_{g*}^{-1}.\]
	
	\begin{definition}[Invariant Endomorphism]
		Let $T\in \Gamma(\mathrm{End}(TP))$. It is called a $G$-invariant endomorphism if $L_{g*}\circ T=T\circ L_{g*}$ for all $g\in G$. 
	\end{definition}

	\subsection{Invariant Connections and Wang's Classification Theorem}
	In this section, we present Wang's classification theorem of invariant connections on homogeneous bundles, which classifies them in terms of linear maps between Lie algebras. Throughout this section, we assume once and for all that $P$ is a homogeneous $H$-bundle over $M=G/K$, where $\lambda: K \to H$ is a Lie group homomorphism. Let $x_0\in M$ be a chosen base point. 
	
	\begin{definition}
		Let $\xi\in \mfg$. Denote by $\xi^*\in TP$ the vector field
		\[\xi^*_p:=\frac{\mrd}{\mrd t}\Bigg|_{t=0}\exp(t\xi)\, p, \quad p\in P.\]
	\end{definition}
	
	We begin with the following lemma.
	
	\begin{lemma}
		Let $P$ be a homogeneous bundle over $M=G/K$. Suppose $\Phi\in \Omega^1(P;\mfh)$ is an invariant right-equivariant $\mfh$-valued one-form of type $\Ad$. Namely, it satisfies
		\[R_h^*\Phi = \Ad_{h^{-1}}\Phi, \quad L_g^*\Phi = \Phi,\]
		for all $h\in H$ and $g\in G$. Let $p_0\in P_{x_0}$ be a fixed point. Define the linear map $\Lambda:\mfg\to\mfh$ by
		\[\Lambda(\xi):=\Phi_{p_0}(\xi_{p_0}^*), \quad \xi\in \mfg.\]
		Then the following are true.
		\begin{enumerate}
			\item $\Lambda\circ \Ad_k(\xi)=\Ad_{\lambda(k)}\circ \Lambda$ for all $k\in K$.
			\item $\Lambda(\kappa) = \Phi(\lambda_{*,e}(\kappa)\,_{p_0}^\#)$ for all $\kappa\in\mfk$.
		\end{enumerate}
		\label{lemma for Lambda}
	\end{lemma}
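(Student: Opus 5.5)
The plan is to prove both statements directly from the definitions, using two facts: how the lifted $G$-action interacts with the $H$-action at $p_0$, namely $kp_0=p_0\lambda(k)$ for $k\in K$ (Theorem \ref{classification of homogeneous bundles}(1)), and the compatibility $g(ph)=(gp)h$. Statement (2) is the simpler one, so I treat it first.

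\textbf{Statement (2).} Take $\kappa\in\mfk$. Then $\exp(t\kappa)\in K$, so
\[\exp(t\kappa)\,p_0=p_0\,\lambda(\exp(t\kappa))=p_0\exp\bigl(t\,\lambda_{*,e}(\kappa)\bigr),\]
using that a Lie group homomorphism intertwines exponential maps. Differentiating at $t=0$ gives $\kappa^*_{p_0}=(\lambda_{*,e}(\kappa))^\#_{p_0}$. Applying $\Phi_{p_0}$ gives $\Lambda(\kappa)=\Phi\bigl((\lambda_{*,e}(\kappa))^\#_{p_0}\bigr)$. Note that horizontality of $\Phi$ is not assumed, so this value need not vanish; the statement only records it.

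\textbf{Statement (1).} Fix $k\in K$ and $\xi\in\mfg$, and set $h=\lambda(k)^{-1}$.

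1. Since $\exp(t\Ad_k\xi)=k\exp(t\xi)k^{-1}$, we have
\[(\Ad_k\xi)^*_{p_0}=\frac{\mrd}{\mrd t}\Big|_{t=0}k\exp(t\xi)k^{-1}p_0 .\]

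2. From $kp_0=p_0\lambda(k)$ we get $k^{-1}p_0=p_0\lambda(k)^{-1}=p_0h$. Using the compatibility of the two actions,
\[k\exp(t\xi)k^{-1}p_0=\bigl(k\exp(t\xi)p_0\bigr)h=R_h L_k\bigl(\exp(t\xi)p_0\bigr).\]
Differentiating gives $(\Ad_k\xi)^*_{p_0}=R_{h*}L_{k*}\,\xi^*_{p_0}$. This is a vector at $kp_0h=p_0$.

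3. Evaluate $\Phi_{p_0}$ on this vector. First use right-equivariance, $\Phi_{qh}(R_{h*}v)=\Ad_{h^{-1}}\Phi_q(v)$, with $q=kp_0$. Then use invariance, $\Phi_{kp_0}(L_{k*}w)=\Phi_{p_0}(w)$. This yields
\[\Lambda(\Ad_k\xi)=\Ad_{\lambda(k)}\Phi_{p_0}(\xi^*_{p_0})=\Ad_{\lambda(k)}\Lambda(\xi).\]

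The main obstacle is the bookkeeping in steps 2 and 3: checking that the composite curve really passes through $p_0$, and applying the equivariance identity at the correct base point $q=kp_0$ with $h=\lambda(k)^{-1}$, so that $\Ad_{h^{-1}}$ becomes $\Ad_{\lambda(k)}$ rather than its inverse. Everything else is a direct consequence of the definitions.
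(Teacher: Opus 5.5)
Your proposal is correct and takes essentially the same route as the paper. For (2) you differentiate $\exp(t\kappa)p_0=p_0\lambda(\exp t\kappa)$, and for (1) you rewrite $k\exp(t\xi)k^{-1}p_0$ via $k^{-1}p_0=p_0\lambda(k)^{-1}$ and then use invariance and right-equivariance. The only difference is the order: you apply equivariance first, at $kp_0$, and then invariance, whereas the paper strips off $L_{k*}$ first, landing at $k^{-1}p_0$, and then applies equivariance at $p_0$.
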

	
	\begin{proof}~
		\begin{enumerate}
			\item Let $\xi\in\mfg$ and $k\in K$. Then 
			\begin{align*}
				\Lambda\circ \Ad_k(\xi)
				&=\Phi_{p_0}((\Ad_k\xi)_{p_0}^*)\\
				&=\Phi_{p_0}\left(\frac{\mrd}{\mrd t}\Big|_{t=0}k\exp(t\xi)k^{-1}p_0\right)\\
				&=\Phi_{p_0}\left(\frac{\mrd}{\mrd t}\Big|_{t=0}k\exp(t\xi)p_0\lambda(k)^{-1}\right) && (\text{by the definition of $\lambda$})\\ 
				&=\Phi_{p_0}\left(L_{k*}\circ R_{\lambda(k)^{-1}*}(\xi_{p_0}^*)\right)\\
				&=\Phi_{k^{-1}p_0}(R_{\lambda(k)^{-1}*}(\xi_{p_0}^*)) && (\text{$\Phi$ is invariant})\\
				&=\Ad_{\lambda(k)}\circ \Phi_{p_0}(\xi_{p_0}^*) && (\text{$\Phi$ is right-equivariant})\\
				&=\Ad_{\lambda(k)}\circ \Lambda(\xi).
			\end{align*}
			\item Let $\kappa\in \mfk$. Then
			\begin{align*}
				\Lambda(\kappa)
				&=\Phi_{p_0}(\kappa_{p_0}^*)\\
				&=\Phi_{p_0}\left(\frac{\mrd}{\mrd t}\Big|_{t=0}\exp(t\kappa)p_0\right)\\
				&=\Phi_{p_0}\left(\frac{\mrd}{\mrd}\Big|_{t=0}p_0\lambda(\exp(t\kappa))\right)\\
				&=\Phi_{p_0}(\lambda_{*}(\kappa)_{p_0}^\#).
			\end{align*}
		\end{enumerate}
	\end{proof}
	
	The next lemma will be needed in proving Wang's classification theorem.

	\begin{lemma}
		Every vector belongs to $T_p P$ can be written as $\xi^*_p+\eta_p^\#$ for some $\xi\in \mfg$ and $\eta\in \mfh$.
	\end{lemma}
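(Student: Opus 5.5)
The plan is a dimension count: the vector field maps $\xi\mapsto\xi^*_p$ and $\eta\mapsto\eta^\#_p$ together span a subspace of $T_pP$ that meets both the vertical directions and a complement to them. Fix $p\in P$ and write $x=\pi(p)$. Consider the linear map
\[
\mfg\oplus\mfh\to T_pP,\qquad (\xi,\eta)\mapsto \xi^*_p+\eta^\#_p .
\]
We want to show this map is surjective. Since $T_pP=V_pP\oplus A_p$ for any connection, it suffices to show two things: every vertical vector lies in the image, and the image projects onto all of $T_xM$ under $\pi_*$.

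The vertical part is immediate. By the description of the vertical subbundle in the preliminaries, $V_pP=\{\eta^\#_p:\eta\in\mfh\}$, so taking $\xi=0$ already produces every vertical vector.

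For the horizontal part, we compute $\pi_*(\xi^*_p)$. Using the equivariance $\pi(gp)=g\pi(p)$, we get
\[
\pi_*(\xi^*_p)=\frac{\mrd}{\mrd t}\Big|_{t=0}\exp(t\xi)\,x ,
\]
which is the fundamental vector field of the $G$-action on $M$ evaluated at $x$. Because $G$ acts transitively, the orbit map $r_x:G\to M$, $g\mapsto gx$, is a surjective submersion. This follows from Proposition \ref{homogeneous spaces classification} after identifying $M\cong G/\mathrm{Stab}_G(x)$, or from the tangent space proposition applied with $x$ as base point. Hence $(r_x)_{*,e}:\mfg\to T_xM$ is onto, and so $\xi\mapsto\pi_*(\xi^*_p)$ is onto $T_xM$.

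Now combine the two facts. Given $v\in T_pP$, choose $\xi\in\mfg$ with $\pi_*(\xi^*_p)=\pi_*(v)$. Then $v-\xi^*_p$ lies in $\ker\pi_*=V_pP$, so $v-\xi^*_p=\eta^\#_p$ for some $\eta\in\mfh$. This gives $v=\xi^*_p+\eta^\#_p$, as required.

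The only step needing care is the surjectivity of $(r_x)_{*,e}$ at an arbitrary point $x$, rather than only at the chosen base point $x_0$. There are two ways to handle it. One is to note that the base point $x_0$ was arbitrary, so the tangent space proposition applies equally at $x$. The other is to transport from $x_0$: write $p=gp'$ with $\pi(p')=x_0$, and use $L_{g*}\xi^*_{p'}=(\Ad_g\xi)^*_p$ together with the fact that $L_{g*}$ is an isomorphism. The rest of the argument is formal.
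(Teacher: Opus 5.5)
Your argument is correct and is essentially the paper's: you pick $\xi\in\mfg$ with $\pi_*(\xi^*_p)=\pi_*(v)$, using that the differential of the orbit map is surjective, and then $v-\xi^*_p$ is vertical and so equals some $\eta^\#_p$. The only difference is that the paper first translates to the fixed point $p_0\in P_{x_0}$ via $L_g$ and $R_h$ (which preserve both kinds of fundamental vectors) and argues there, whereas you work directly at $p$; this is equally valid, since the base point in the homogeneous-space proposition is arbitrary.
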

	
	\begin{proof}
		Let $p_0\in P_{x_0}$ and $p\in P$. Since $P$ is a homogeneous bundle, there exist $g\in G, h\in H$ such that $p_0=gph$. Therefore, every vector at $p$ can be translated to $p_0$ by $L_g$ and $R_h$. Moreover, $L_g$ and $R_h$ preserve fundamental vector fields induced by $G$- and $H$-action. For instance, $L_{g*}(\xi^*_p)=(\Ad_{g}\xi)_{gp}^*$ and $R_h(\xi^*_p)=\xi^*_{ph}$ for every $g\in G, h\in H$ and $\xi\in \mfh$. Hence, it suffices to prove the statement at the point $p_0$. Let $X\in T_{p_0}P$. Since $r_{x_0*}:\mfg\to T_{x_0} M$ is surjective, there exists $\xi\in \mfg$ such that $r_{x_0*}(\xi)=\pi_{*,p_0}(X)$. It follows that 
		\[\pi_{*,p_0}(X-\xi^*_{p_0})=\pi_{*,p_0}X-r_{x_0*}(\xi)=0.\]
		In other words, $X-\xi^*_{p_0}\in V_{p_0} P$. Thus, there exists $\eta\in \mfh$ such that $X-\xi^*_{p_0}=\eta_{p_0}^\#$.
	\end{proof}
	
	\begin{proposition}
		Let $P\to M$ be a homogeneous $H$-bundle and $\theta: TP\to \mfh$ an invariant connection on $P$. Define $\Lambda: \mfg \to \mfh$ by 
		\[\Lambda(\xi):=\theta_{p_0}(\xi^*_{p_0}), \quad \xi\in \mfg,\]
		where $p_0\in P_{x_0}$ is fixed and 
		\[\xi^*_{p_0}:=\frac{\mrd}{\mrd t}\Bigg|_{t=0} \exp(t\xi)\,p_0.\]
		Then the following hold. 
		\begin{enumerate}
			\item For all $k\in K$, $\Lambda\circ \Ad_k = \Ad_{\lambda(k)}\circ \Lambda$.
			\item For all $\kappa\in \mfk$,
			$\Lambda(\kappa)=\lambda_{*,e}(\kappa)$.
			\item The curvature $F$ of $\theta$ satisfies 
			\[F_{p_0}(\xi^*_{p_0},\zeta^*_{p_0})=[\Lambda(\xi),\Lambda(\zeta)]-\Lambda([\xi,\zeta])\quad \xi,\zeta\in \mfg.\]
		\end{enumerate}
		\label{wang classification theorem easier direction}
	\end{proposition}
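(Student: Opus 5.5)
Parts (1) and (2) follow by specialising Lemma \ref{lemma for Lambda} to $\Phi=\theta$. The lemma only uses that $\Phi$ is invariant and right-equivariant of type $\Ad$. An invariant connection satisfies $L_g^*\theta=\theta$ and $R_h^*\theta=\Ad_{h^{-1}}\theta$, so the lemma applies verbatim.

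Part (1) is then exactly statement (1) of the lemma. For part (2), the lemma gives $\Lambda(\kappa)=\theta_{p_0}\bigl((\lambda_{*,e}(\kappa))^\#_{p_0}\bigr)$. The defining property $\theta(\eta^\#)=\eta$ of a connection form then yields $\Lambda(\kappa)=\lambda_{*,e}(\kappa)$.

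For part (3), I would use the structure equation of Proposition \ref{curvature 2 form formula}:
\[F(X,Y)=\mrd\theta(X,Y)+[\theta(X),\theta(Y)].\]
Evaluate it on the vector fields $\xi^*,\zeta^*$ on all of $P$. Then
\[\mrd\theta(\xi^*,\zeta^*)=\xi^*(\theta(\zeta^*))-\zeta^*(\theta(\xi^*))-\theta([\xi^*,\zeta^*]).\]
Two facts are needed.
\begin{enumerate}
\item Since $\xi^*$ is generated by the flow $L_{\exp(t\xi)}$ of a left action, the bracket of fundamental fields is $[\xi^*,\zeta^*]=-[\xi,\zeta]^*$. I would check the sign carefully via $[\xi^*,\zeta^*]=\frac{\mrd}{\mrd t}\big|_{t=0}L_{\exp(-t\xi)*}\zeta^*$ together with $L_{g*}\zeta^*=(\Ad_g\zeta)^*$, which was noted in the proof of the preceding lemma.
\item The derivatives of the functions $p\mapsto\theta_p(\zeta^*_p)$ along $\xi^*$ must be computed. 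Invariance gives $\theta_{gp}((\Ad_g\zeta)^*_{gp})=\theta_p(\zeta^*_p)$, so along the curve $\exp(t\xi)p_0$ we get $\theta(\zeta^*)(\exp(t\xi)p_0)=\theta_{p_0}((\Ad_{\exp(-t\xi)}\zeta)^*_{p_0})=\Lambda(\Ad_{\exp(-t\xi)}\zeta)$. Differentiating at $t=0$ gives $\xi^*_{p_0}(\theta(\zeta^*))=-\Lambda([\xi,\zeta])$, and symmetrically $\zeta^*_{p_0}(\theta(\xi^*))=-\Lambda([\zeta,\xi])=\Lambda([\xi,\zeta])$.
\end{enumerate}

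Combining these,
\[\mrd\theta(\xi^*,\zeta^*)_{p_0}=-\Lambda([\xi,\zeta])-\Lambda([\xi,\zeta])+\Lambda([\xi,\zeta])=-\Lambda([\xi,\zeta]).\]
Adding $[\theta(\xi^*),\theta(\zeta^*)]_{p_0}=[\Lambda(\xi),\Lambda(\zeta)]$ gives the claimed formula.

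The main obstacle is bookkeeping of signs. The sign of $[\xi^*,\zeta^*]$ for a left action, and the sign produced when differentiating $\Ad_{\exp(-t\xi)}$, must combine correctly. If the conventions for $\mrd\theta$ or for the bracket of fundamental fields differ, the intermediate terms change but should still net to $-\Lambda([\xi,\zeta])$. As a sanity check I would compare with the flat case $P=G$, $H=G$, $\theta$ the Maurer–Cartan form, where $\Lambda=\Ad_{p_0^{-1}}$-type and the curvature must vanish.
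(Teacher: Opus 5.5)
Your proof is correct and follows the paper's argument. Parts (1) and (2) come from Lemma \ref{lemma for Lambda} with $\Phi=\theta$ together with $\theta(\eta^\#)=\eta$, and part (3) comes from the structure equation evaluated on $\xi^*,\zeta^*$, using $[\xi^*,\zeta^*]=-[\xi,\zeta]^*$ and $\xi^*_{p_0}(\theta(\zeta^*))=-\Lambda([\xi,\zeta])$; the only cosmetic difference is that the paper derives the latter from $\mathcal{L}_{\xi^*}\theta=0$, while you differentiate $\Lambda(\Ad_{\exp(-t\xi)}\zeta)$ directly, which is the same computation.
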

	
	\begin{proof}
		Recall that the connection one-form $\theta$ is right-equivariant of type $\Ad$. Therefore, (1) and (2) follow from Lemma \ref{lemma for Lambda}.
		To prove (3), let $\xi,\zeta\in \mfg$. Since $\theta$ is invariant, we see that 
		\[\mathcal{L}_{\xi^*}\theta=\frac{\mrd}{\mrd t}\Bigg|_{t=0}(L_{\exp t\xi}^*\theta)=\frac{\mrd}{\mrd t}\Bigg|_{t=0}\theta = 0.\]
		Therefore,
		\[
		\xi^*_{p_0}(\theta(\zeta^*))=(\mathcal{L}_{\xi}\theta)_{p_0}(\zeta^*_{p_0})+\theta_{p_0}([\xi^*,\zeta^*])=0+\theta_{p_0}([\zeta,\xi]^*)=-\Lambda([\xi,\zeta]).
		\]
		As a result, the curvature two-form is given by
		\begin{align*}
			F(\xi^*_{p_0},\zeta^*_{p_0})
			&= \mrd\theta(\xi^*_{p_0},\zeta^*_{p_0})+[\theta(\xi^*_{p_0}),\theta(\zeta^*_{p_0})]\\
			&=-\Lambda([\xi,\zeta])+\Lambda([\zeta,\xi])-\theta_{p_0}([\xi^*_{p_0},\zeta^*_{p_0}])+[\Lambda(\xi),\Lambda(\zeta)]\\
			&=-2\Lambda([\xi,\zeta])-\theta_{p_0}([\zeta,\xi]_{p_0}^*)+[\Lambda(\xi),\Lambda(\zeta)]\\
			&=-2\Lambda([\xi,\zeta])+\Lambda([\xi,\zeta])+[\Lambda(\xi),\Lambda(\zeta)]\\
			&=[\Lambda(\xi),\Lambda(\zeta)]-\Lambda([\xi,\zeta]),
		\end{align*}
		where we used the fact that $[\xi^*,\zeta^*]=-[\xi,\zeta]^*$ since $G$ acts on $P$ from the left.
	\end{proof}
	
	Wang's classification theorem gives the converse.
	
	\begin{theorem}[Wang's Classification Theorem]
		Let $P=G\times_\lambda H\to M=G/K$ be a principal $H$-bundle. There is a one-to-one correspondence between the space of invariant connections on $P$ and the set of linear maps $\Lambda: \mfg\to \mfh$ such that $\Lambda\circ \Ad_k=\Ad_{\lambda(k)}\circ\Lambda$ and $\Lambda(\kappa)=\lambda_{*,e}(\kappa)$ for all $k\in K,\kappa\in \mfk$.
		\label{Wang classification theorem}
	\end{theorem}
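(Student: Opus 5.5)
Proposition \ref{wang classification theorem easier direction} already gives one direction: every invariant connection $\theta$ yields $\Lambda(\xi)=\theta_{p_0}(\xi^*_{p_0})$ with the two required properties. So the plan has two parts. First, show that the map $\theta\mapsto\Lambda$ is injective. Second, build, for every admissible $\Lambda$, an invariant connection inducing it. Throughout I take $p_0=[e,e]_\lambda$, so that $kp_0=p_0\lambda(k)$ for $k\in K$.

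\textbf{Injectivity.} Let $\theta$ and $\theta'$ be invariant connections with the same $\Lambda$. Their difference $\Phi=\theta-\theta'$ is a tensorial one-form of type $\Ad$ (Proposition \ref{space of connections as affine space}), and it is invariant. Consider the value $\Phi_{p_0}(\xi^*_{p_0}+\eta^\#_{p_0})$. This equals $\Lambda(\xi)-\Lambda(\xi)+0=0$, since both connections take the same value on $\eta^\#$. By the lemma above, every vector of $T_{p_0}P$ has this form, so $\Phi_{p_0}=0$. Now use $L_g^*\Phi=\Phi$ and $R_h^*\Phi=\Ad_{h^{-1}}\Phi$, together with the transitivity of $G\times H$ on $P$ (every $p$ can be written as $gp_0h$). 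This gives $\Phi\equiv 0$.

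\textbf{Existence, at the base point.} Given $\Lambda$, define $\theta_{p_0}:T_{p_0}P\to\mfh$ by
\[\theta_{p_0}(\xi^*_{p_0}+\eta^\#_{p_0}):=\Lambda(\xi)+\eta.\]
Well-definedness is the key check. Suppose $\xi^*_{p_0}+\eta^\#_{p_0}=0$. Projecting by $\pi_*$ gives $r_{x_0*}\xi=0$, so $\xi=\kappa\in\mfk$. Then, as in the proof of Lemma \ref{lemma for Lambda}(2), $\kappa^*_{p_0}=(\lambda_*\kappa)^\#_{p_0}$. Freeness of the $H$-action then forces $\eta=-\lambda_*\kappa$. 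Hence $\Lambda(\kappa)+\eta=0$, using the hypothesis $\Lambda|_{\mfk}=\lambda_*$.

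\textbf{Existence, extension to all of $P$.} Extend by
\[\theta_{gp_0h}(L_{g*}R_{h*}X):=\Ad_{h^{-1}}\theta_{p_0}(X).\]
I expect the main obstacle to be checking that this is well defined. Suppose $gp_0h=g'p_0h'$. Then $g^{-1}g'=k\in K$, and $h=\lambda(k)h'$. So it suffices to verify that $\theta_{p_0}\circ (L_k R_{\lambda(k)^{-1}})_* = \Ad_{\lambda(k)}\theta_{p_0}$ on $T_{p_0}P$.
\begin{itemize}
\item On $\xi^*_{p_0}$, the map $L_kR_{\lambda(k)^{-1}}$ fixes $p_0$ and sends $\xi^*_{p_0}$ to $(\Ad_k\xi)^*_{p_0}$. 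So the left side is $\Lambda(\Ad_k\xi)$, which equals $\Ad_{\lambda(k)}\Lambda(\xi)$ by the equivariance hypothesis.
\item On $\eta^\#_{p_0}$, it gives $(\Ad_{\lambda(k)}\eta)^\#_{p_0}$, since $L_k$ commutes with $R$.
\end{itemize}

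\textbf{Remaining checks.} Smoothness follows by using local sections $\sigma$ of $G\to G/K$. In the induced trivialisation, $\theta$ is written through $\Ad$, $\Lambda$ and the Maurer–Cartan form of $G$ pulled back by $\sigma$, all of which are smooth. By construction, $\theta$ is $G$-invariant, $\Ad$-equivariant, and reproduces fundamental vectors: at $p_0$ it does so by definition, and the property is preserved by the translations. Finally, $\theta_{p_0}(\xi^*_{p_0})=\Lambda(\xi)$, so the two assignments are mutually inverse, which gives the bijection.
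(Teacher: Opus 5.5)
Your proposal is correct and follows the same construction as the paper: define $\theta_{p_0}(\xi^*_{p_0}+\eta^\#_{p_0})=\Lambda(\xi)+\eta$, check it is well defined using $\Lambda|_{\mfk}=\lambda_{*,e}$, and extend by $G$-invariance and $\Ad$-equivariance. You also supply the check that the extension is independent of the choice of $(g,h)$, which the paper only asserts and which is where the hypothesis $\Lambda\circ\Ad_k=\Ad_{\lambda(k)}\circ\Lambda$ is actually used, along with the injectivity and smoothness arguments that the paper omits.
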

	
	\begin{proof}
		One direction is proved in Proposition \ref{wang classification theorem easier direction}. For the converse, let $\Lambda: \mfg\to \mfk$ be a linear map satisfying the assumptions. Let $p\in P$, $X\in T_p P$ and $g\in G, h\in H$ such that $p_0=gph$. By the lemma above, there exist $\xi\in \mfg, a\in \mfh$ such that $L_{g*}\circ R_{h*}(X)=\xi^*_{p_0}+a_{p_0}^\#$. Define $\theta_p:T_p P \to \mfh$ by 
		\[\theta(X):=\Ad_h(\Lambda(\xi)+a).\]
		We first prove that $\theta(X)$ is independent of the choice of $\xi$ and $a$. Suppose
		\[
		\xi_{p_0}^* + a_{p_0}^\# = \zeta_{p_0}^* + b_{p_0}^\#,
		\quad\text{where } \zeta \in \mathfrak{g} \text{ and } b \in \mathfrak{h},
		\]
		so that $\xi_{p_0}^* - \zeta_{p_0}^* = b_{p_0}^\# - a_{p_0}^\#$. Next, observe that $\xi-\zeta\in \mfk$. This is because
		\[\pi_{*,p_0}(\xi_{p_0}^*-\zeta_{p_0}^*)=\pi_*(b^\sharp-a^\sharp)=0,\]
		and 
		\[\pi_{*,p_0}(\xi_{p_0}^*)=\frac{\mrd}{\mrd t}\Big|_{t=0}\pi(\exp(t\xi)p_0)=\frac{\mrd}{\mrd t}\Big|_{t=0}\exp(t\xi)x_0=r_{x_0*}(\xi).\]
		Therefore, $\xi-\zeta\in \ker r_{x_0*}=\mfk$. Next, for any $\kappa\in\mfk$, we have
		\[\kappa_{p_0}^*=\frac{\mrd}{\mrd t}\Big|_{t=0}\exp(t\kappa)p_0=\frac{\mrd}{\mrd t}\Big|_{t=0}p_0\lambda(\exp(t\kappa))=(\lambda_*\kappa)_{p_0}^\#.\]
		Applying this formula for $\kappa = \xi-\zeta$, it follows that 
		\[
		\lambda_*(\xi - \zeta) = b - a.
		\]
		By assumption, $\Lambda|_\mfk=\lambda_*$. Thus,
		\[
		\Lambda(\xi) - \Lambda(\zeta) = \Lambda(\xi - \zeta) = b-a.
		\]
		Hence, $\Lambda(\xi) + a = \Lambda(\zeta) + b$.
		
		One can show that that $\theta(\xi)$ is independent of the choice of $g$ and $h$. 
		Finally, we check that is an invariant connection. It is obvious that $\theta_p(\eta_p^\#)=\eta$ for $p\in P$ and $\eta\in \mfh$. Let $h'\in H$ and $X=L_{g^{-1}*}\circ R_{h^{-1}*}(\xi^*_{p_0}+\eta_{p_0}^\#)$. Since $g(ph')h'^{-1}h=p_0$ and $L_{g*}\circ R_{h'^{-1}h*} (R_{h'*}X)= \xi_{p_0}^*+a_{p_0}^\# $, by the definition of $\theta_{ph'}$, it follows that
		\begin{align*}
			(R_{h'}^*\theta)_p(X)
			&=\theta_{ph'}(R_{h'}X)\\
			&=\Ad_{h'^{-1}h} (\Lambda(\xi)+a)\\
			&=\Ad_{h'^{-1}} \theta_p(X).
		\end{align*}
		It remains to show that it is invariant. Let $g'\in G$ and $X=L_{g^{-1}*}\circ R_{h^{-1}*}(\xi^*_{p_0}+\eta_{p_0}^\#)$. 
		Since $gg'^{-1}(g'p)h=p_0$ and $L_{gg'^{-1}*}\circ R_{h*}(L_{g'*}X)=L_{g*}\circ R_{h*}(X)=\xi_{p_0}^*+a_{p_0}^\#$, by the definition of $\theta_{g'p}$, we have
		\begin{align*}
			(L_{g'}^*\theta)_p(X)
			&=\theta_{g'p}(L_{g'*}X)\\
			&=\Ad_h(\Lambda(\xi)+a)\\
			&=\theta_p(X).
		\end{align*}
	\end{proof}
	
	As a corollary, we show that every homogeneous bundle over reductive homogeneous spaces admits an invariant connection. Before that, we define reductive homogeneous spaces.
	
	\begin{definition}[Reductive Homogeneous Space]
		Let $M=G/K$ be a homogeneous space. It is called reductive if there is an $\Ad_K$-invariant subspace $\mfm$ of $\mfg$ complementary to $\mfk$. In other words, there exists a vector subspace $\mfm$ of $\mfg$ such that $\mfg = \mfk\oplus \mfm$ and 
		\[\Ad_k(\mfm)\subseteq \mfm,\]
		for every $k\in K$.
	\end{definition}
	
	\begin{corollary}
		Let $P=G\times_\lambda H$ be a homogeneous $H$-bundle over a reductive homogeneous space $M$. Then $P$ admits an invariant connection.
	\end{corollary}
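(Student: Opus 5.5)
By Wang's Classification Theorem (Theorem \ref{Wang classification theorem}), it suffices to produce one linear map $\Lambda:\mfg\to\mfh$ satisfying
\[\Lambda\circ\Ad_k=\Ad_{\lambda(k)}\circ\Lambda \quad\text{for all } k\in K, \qquad \Lambda|_{\mfk}=\lambda_{*,e}.\]
The corresponding invariant connection then exists automatically. The reductive decomposition $\mfg=\mfk\oplus\mfm$ tells us how to build it: define $\Lambda$ on $\mfk$ by $\lambda_{*,e}$, set $\Lambda|_{\mfm}=0$, and extend linearly. Concretely,
\[\Lambda(\kappa+m):=\lambda_{*,e}(\kappa),\qquad \kappa\in\mfk,\ m\in\mfm.\]
Then $\Lambda$ is linear, and the normalization $\Lambda|_{\mfk}=\lambda_{*,e}$ holds by construction.

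The only thing to check is $K$-equivariance. Let $\xi=\kappa+m$ and $k\in K$. We have $\Ad_k(\mfk)\subseteq\mfk$ because $K$ is a subgroup, so conjugation by $k$ preserves $K$ and differentiating gives invariance of $\mfk$. We have $\Ad_k(\mfm)\subseteq\mfm$ by the reductivity hypothesis. Hence $\Ad_k\xi=\Ad_k\kappa+\Ad_k m$ is again the decomposition of $\Ad_k\xi$ into its $\mfk$- and $\mfm$-components, and
\[\Lambda(\Ad_k\xi)=\lambda_{*,e}(\Ad_k\kappa).\]
It remains to show $\lambda_{*,e}\circ\Ad_k=\Ad_{\lambda(k)}\circ\lambda_{*,e}$ on $\mfk$. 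This is the standard naturality of $\Ad$ under a Lie group homomorphism. Since $\lambda(k\exp(t\kappa)k^{-1})=\lambda(k)\lambda(\exp t\kappa)\lambda(k)^{-1}$, differentiating at $t=0$ gives
\[\lambda_{*}(\Ad_k\kappa)=\Ad_{\lambda(k)}\lambda_*(\kappa).\]
Therefore $\Lambda(\Ad_k\xi)=\Ad_{\lambda(k)}\lambda_*(\kappa)=\Ad_{\lambda(k)}\Lambda(\xi)$, as required.

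There is no real obstacle. The only point needing care is that the splitting $\xi=\kappa+m$ is compatible with $\Ad_k$, and this is exactly what reductivity (together with the automatic $\Ad_K$-invariance of $\mfk$) guarantees. Without an $\Ad_K$-invariant complement, setting $\Lambda=0$ on an arbitrary complement would fail equivariance. Applying Theorem \ref{Wang classification theorem} to this $\Lambda$ then yields an invariant connection on $P=G\times_\lambda H$; it is the canonical connection of the reductive decomposition, with horizontal space at $p_0$ spanned by $\{m^*_{p_0}: m\in\mfm\}$.
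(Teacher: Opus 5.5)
Your proof is correct and is essentially the paper's argument: set $\Lambda=\lambda_{*,e}$ on $\mfk$ and $\Lambda=0$ on the $\Ad_K$-invariant complement $\mfm$, verify $K$-equivariance using the splitting, and apply Wang's classification theorem. You also justify two facts the paper uses without comment, namely the $\Ad_K$-invariance of $\mfk$ and the naturality identity $\lambda_{*,e}\circ\Ad_k=\Ad_{\lambda(k)}\circ\lambda_{*,e}$.
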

	
	\begin{proof}
		Let $\mfm$ be an $\Ad_K$-invariant subspace of $\mfg$ complementary to $\mfk$. Define $\Lambda:\mfg\to\mfh$ by
		
		\begin{equation*}
			\Lambda(\xi):=
			\begin{cases}
				\lambda_{*,e}(\xi) & \xi\in \mfk,\\
				0 & \xi\in\mfm.
			\end{cases}
		\end{equation*}
		By definition of $\Lambda$, $\Lambda|_\mfk = \lambda_{*,e}$. Let $k\in K$ and $\xi\in\mfg$. If $\xi \in \mfk$, then
		\[\Lambda(\Ad_k \xi)=\lambda_{*,e}(\Ad_k\xi)=\Ad_{\lambda(k)}\lambda_{*,e}(\xi)=\Ad_{\lambda(k)}\Lambda(\xi).\]
		On the other hand, if $\xi\in \mfm$, then $\Ad_k\xi\in \mfm$. Hence, 
		\[\Lambda(\Ad_k(\xi))=0=\Ad_{\lambda(k)}(\Lambda(\xi)).\]
		Therefore, by the Wang's classification, there exists an invariant connection on $P$.
	\end{proof}
	
	\begin{definition}
		The canonical invariant connection on a homogeneous $H$-bundle over a reductive homogeneous space $M$ is the invariant connection corresponding to the linear map $\Lambda:\mfg\to\mfh$ defined by
		\begin{equation*}
			\Lambda(\xi):=
			\begin{cases}
				\lambda_{*,e}(\xi) & \xi\in \mfk,\\
				0 & \xi\in\mfm.
			\end{cases}
		\end{equation*}
	\end{definition}
	
	A closely related classification in terms of tensorial forms is given in the next section.

	\subsection{Invariant Connections and Tensorial Forms}
	In this section, we present another classification of invariant connections using the one-to-one correspondence between the space of tensorial forms and the space of connections (Theorem \ref{space of connections as affine space}). Recall that we have a map $r_{x_0}:G\to M$ sending $g\in G$ to $g\cdot x_0\in M$ and its differential $r_{x_0*,e}:\mfg\to T_{x_0}M$ restricted to $\mfg/\mfk$ is an isomorphism.

	\begin{theorem}
		Let $P=G\times_\lambda H\to M$ be a homogeneous $H$-bundle. There is a one-to-one correspondence between the space of invariant tensorial one forms on $P$ and the subset of $\mathrm{Hom}(\mfg,\mfh)$ consisting of linear maps $\phi:\mfg\to\mfh$ such that
		\begin{enumerate}
			\item $\phi|_\mfk=0$.
			\item $\phi\circ \Ad_k = \Ad_{\lambda(k)}\circ \phi$ for all $k\in K$.
		\end{enumerate}
		\label{one-to-one correspondence between tensorial forms and phi,psi}
	\end{theorem}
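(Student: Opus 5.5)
The plan is to argue exactly as in Wang's Classification Theorem (Theorem \ref{Wang classification theorem}), either directly or by reducing to it through the affine structure of Proposition \ref{space of connections as affine space}.

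\textbf{Forward direction.} Given an invariant tensorial one-form $\Phi\in\Omega^1_{\Ad}(P,\mfh)$, I define $\phi(\xi):=\Phi_{p_0}(\xi^*_{p_0})$. Since $\Phi$ is invariant and right-equivariant of type $\Ad$, Lemma \ref{lemma for Lambda} applies.
\begin{itemize}
\item Part (1) of the lemma gives $\phi\circ\Ad_k=\Ad_{\lambda(k)}\circ\phi$ for all $k\in K$.
\item Part (2) gives $\phi(\kappa)=\Phi_{p_0}((\lambda_{*,e}\kappa)^\#_{p_0})$ for $\kappa\in\mfk$. This vanishes because $\Phi$ is horizontal, so $\phi|_\mfk=0$.
\end{itemize}

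\textbf{Injectivity.} Suppose $\phi=0$. By the lemma preceding Wang's theorem, every vector at $p_0$ has the form $\xi^*_{p_0}+\eta^\#_{p_0}$. Horizontality kills the vertical part, so $\Phi_{p_0}(\xi^*_{p_0}+\eta^\#_{p_0})=\phi(\xi)=0$, and hence $\Phi_{p_0}=0$. Every point has the form $p=g^{-1}p_0h^{-1}$, and invariance together with equivariance gives $\Phi_p=\Ad_h\Phi_{p_0}\circ(L_{g}R_{h})_*$. Therefore $\Phi\equiv0$.

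\textbf{Surjectivity.} Given $\phi$ satisfying conditions (1) and (2), I would copy the construction in the proof of Wang's theorem. For $X\in T_pP$ with $L_{g*}R_{h*}X=\xi^*_{p_0}+a^\#_{p_0}$, set
\[
\Phi_p(X):=\Ad_h\,\phi(\xi),
\]
which is the Wang formula with $a$ dropped.

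The alternative route is a reduction. If an invariant connection $\theta^0$ exists, with map $\Lambda_0$, then $\Lambda_0+\phi$ satisfies Wang's hypotheses: the equivariance is linear in the map, and $(\Lambda_0+\phi)|_\mfk=\lambda_*$. So $\theta:=$ the invariant connection of $\Lambda_0+\phi$ exists, and $\Phi=\theta-\theta^0$ is an invariant tensorial form with the required map. This route needs an invariant connection to exist, which is not guaranteed without reductivity. For that reason I will do the direct construction.

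\textbf{Well-definedness of $\Phi$.} There are two ambiguities to control.
\begin{itemize}
\item \emph{Choice of decomposition.} If $\xi^*_{p_0}+a^\#_{p_0}=\zeta^*_{p_0}+b^\#_{p_0}$, then, as in Wang's proof, $\xi-\zeta\in\mfk$. Hence $\phi(\xi)=\phi(\zeta)$ by condition (1).
\item \emph{Choice of $(g,h)$ with $p_0=gph$.} Two such pairs differ by $(g,h)\mapsto(kg,\,h\lambda(k)^{-1})$ for some $k\in K$, since $kp_0\lambda(k)^{-1}=p_0$. Pushing forward by $L_k R_{\lambda(k)^{-1}}$ sends $\xi^*_{p_0}+a^\#_{p_0}$ to $(\Ad_k\xi)^*_{p_0}+(\Ad_{\lambda(k)}a)^\#_{p_0}$. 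The formula then produces $\Ad_{h\lambda(k)^{-1}}\phi(\Ad_k\xi)=\Ad_h\phi(\xi)$ by condition (2).
\end{itemize}
The second item is the step where the equivariance of $\phi$ is genuinely used. I expect it to be the main obstacle, and it is precisely the check the paper skipped in Wang's proof. I would carry it out by computing $L_{k*}R_{\lambda(k)^{-1}*}$ on fundamental fields.

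\textbf{Remaining properties of $\Phi$.}
\begin{itemize}
\item Horizontality: vertical vectors have $\xi=0$ after translation, so $\Phi$ vanishes on them.
\item Linearity and smoothness: these follow from local sections and the linearity of the decomposition modulo $\mfk$.
\item Right equivariance and $G$-invariance: these follow verbatim from the computations at the end of Wang's proof.
\end{itemize}
By construction, $\Phi$ recovers $\phi$ at $p_0$, which completes the bijection.
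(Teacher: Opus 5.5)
Your proposal is correct and follows essentially the same route as the paper. The forward direction goes through Lemma \ref{lemma for Lambda}, with horizontality giving $\phi|_\mfk=0$, and the converse extends $\Phi_{p_0}(\xi^*_{p_0})=\phi(\xi)$ by $G$-invariance and $H$-equivariance exactly as in Wang's proof. Your explicit check that replacing $(g,h)$ by $(kg,h\lambda(k)^{-1})$ leaves $\Ad_h\phi(\xi)$ unchanged via condition (2), and your injectivity argument, correctly supply details the paper only asserts.
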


	\begin{proof}
		Given an invariant tensorial form $\Phi:TP\to\mfh$, define 
		\[\phi=\Phi_{p_0}\circ r_{p_0*},\]
		where $r_{p_0}:G\to P$ sends $g$ to $gp_0$. By Lemma \ref{lemma for Lambda}, $\phi:\mfg\to\mfh$ is the desired linear map. Conversely, if $\phi:\mfg\to\mfh$ is a linear map satisfying conditions (1) and (2), by fixing $p_0\in P_{x_0}$, we define $\Phi$ by 
		\[\Phi_{p_0}(\xi_{p_0}^*)=\phi(\xi),\quad \xi\in \mfg,\]
		and extend to a global one-form on $P$ by $G$-invariance and $H$-equivariance. The proof is similar to the proof of Wang's classification theorem (Theorem \ref{Wang classification theorem}).
	\end{proof}

	\begin{remark}~
		The map $\phi:\mfg\to\mfk$ is not necessarily a Lie algebra homomorphism.
		\label{lambda iff lambda'}
	\end{remark}

	\begin{theorem}
		Let $\lambda:K\to H$ be a Lie group homomorphism and
		\[P=G\times_\lambda H\to M.\] 
		Suppose there exists an invariant connection on $P$. Then there exists a one-to-one correspondence between the space of invariant connections on $P$ and the subset of $\mathrm{Hom}(\mfg,\mfh)$ consisting of linear maps $\phi:\mfg\to\mfh$ satisfying the following conditions:
		\begin{enumerate}
			\item $\phi|_\mfk=0$.
			\item $\phi$ satisfies the equation
			\begin{equation}
				\phi\circ \Ad_k = \Ad_{\lambda(k)}\circ \phi \text{ for all $k\in K$}.
				\label{Lambda circ Ad_k}
			\end{equation}
		\end{enumerate}
		Moreover, if $K$ is connected, then condition \eqref{Lambda circ Ad_k} may be replaced by:
		\begin{equation}
			\phi\circ \on{ad}_\kappa = \on{ad}_{\lambda_{*,e}(\kappa)}\circ \phi \text{ for all $\kappa\in \mfk$},
			\label{Lambda circ ad_k}
		\end{equation}
		where $\on{ad}:\mfh\to \mathfrak{gl}(\mfh)$ denotes the adjoint representation of $\mfh$ (and $\mfk$), defined by $\on{ad}_{\eta_1}(\eta_2)=[\eta_1,\eta_2]$ for all $\eta_1,\eta_2\in \mfh$.
		\label{classification of invariant connections (Gx_lambda H)}
	\end{theorem}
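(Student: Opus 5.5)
Fix an invariant connection $\theta^0$ on $P$, which exists by hypothesis. By Proposition \ref{space of connections as affine space}, every connection has the form $\theta=\theta^0+\Phi$ with $\Phi\in\Omega^1_{\Ad}(P,\mfh)$, and $\Phi$ is uniquely determined by $\theta$. We first check that $\theta$ is invariant exactly when $\Phi$ is. Since $L_g^*\theta^0=\theta^0$, we have $L_g^*\theta-\theta=L_g^*\Phi-\Phi$. So $\theta\mapsto\theta-\theta^0$ is a bijection from invariant connections onto invariant tensorial one-forms of type $\Ad$. Composing with the bijection of Theorem \ref{one-to-one correspondence between tensorial forms and phi,psi} gives the first claim. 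Explicitly, the correspondence is $\theta\mapsto\phi=\Phi_{p_0}\circ r_{p_0*}$, where $\Phi=\theta-\theta^0$. Equivalently, $\phi=\Lambda_\theta-\Lambda_{\theta^0}$, the difference of the Wang maps from Theorem \ref{Wang classification theorem}. From this form, condition (1) is clear, because both Wang maps restrict to $\lambda_{*,e}$ on $\mfk$, and condition (2) is clear because both Wang maps intertwine $\Ad_k$ with $\Ad_{\lambda(k)}$.

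It remains to show that, when $K$ is connected, \eqref{Lambda circ Ad_k} is equivalent to \eqref{Lambda circ ad_k}.

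\emph{From \eqref{Lambda circ Ad_k} to \eqref{Lambda circ ad_k}.} Put $k=\exp(t\kappa)$ with $\kappa\in\mfk$. Then $\Ad_{\exp t\kappa}=e^{t\on{ad}_\kappa}$, and since $\lambda(\exp t\kappa)=\exp(t\lambda_{*,e}\kappa)$ we also have $\Ad_{\lambda(\exp t\kappa)}=e^{t\on{ad}_{\lambda_{*,e}\kappa}}$. Differentiating the identity at $t=0$ gives \eqref{Lambda circ ad_k}. This direction does not use connectedness.

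\emph{From \eqref{Lambda circ ad_k} to \eqref{Lambda circ Ad_k}.} Here connectedness is needed.
\begin{itemize}
\item Iterating \eqref{Lambda circ ad_k} gives $\phi\circ\on{ad}_\kappa^n=\on{ad}_{\lambda_{*,e}\kappa}^n\circ\phi$ for every $n\geq 0$.
\item Summing the exponential series yields $\phi\circ\Ad_{\exp\kappa}=\Ad_{\lambda(\exp\kappa)}\circ\phi$. This uses the identities $\Ad_{\exp\kappa}=e^{\on{ad}_\kappa}$ and $\lambda\circ\exp=\exp\circ\lambda_{*,e}$.
\item The set $S$ of $k\in K$ satisfying \eqref{Lambda circ Ad_k} is closed under products, because both $\Ad$ and $\Ad\circ\lambda$ are homomorphisms. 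It is also closed under inverses, since the relation can be rearranged.
\item So $S$ is a subgroup containing $\exp(\mfk)$. A connected Lie group is generated by any neighbourhood of the identity, in particular by $\exp(\mfk)$. Hence $S=K$.
\end{itemize}

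No step is a serious obstacle, since the argument only combines earlier results with standard Lie theory. The point needing most care is the invariance bookkeeping in the first step. One must check that the bijection of Theorem \ref{one-to-one correspondence between tensorial forms and phi,psi} really applies to $\theta-\theta^0$. That requires $\theta-\theta^0$ to be both tensorial and $G$-invariant, and both properties were established above.
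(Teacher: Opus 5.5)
Your proposal is correct and follows the paper's proof: fix $\theta^0$, use the affine structure to reduce to invariant tensorial one-forms and the earlier tensorial-form correspondence, then pass from the $\on{ad}$-condition to the $\Ad$-condition by exponentiating and using that $\exp(\mfk)$ generates the connected group $K$. Your version is in fact tidier than the paper's: you argue via the subgroup of those $k$ satisfying the intertwining relation, rather than treating $\phi$ as a group homomorphism as the paper's text suggests, and you also supply the easy converse direction from $\Ad$ to $\on{ad}$, which the paper leaves implicit.
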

	\begin{proof}
		Let $\theta^0$ be a fixed invariant connection on $P$. Since the space of invariant connections is an affine space modeled on the space of invariant tensorial one-forms of type $\Ad$, it suffices to show that there is a one-to-one correspondence between the space of invariant tensorial forms and the given subset of $\mathrm{Hom}(\mfg,\mfh)$. This is proved previously in Theorem \ref{one-to-one correspondence between tensorial forms and phi,psi}.
		
		If in addition $K$ is connected, then $K$ is generated by elements of the form $\exp \kappa$ where $\kappa\in\mfk$. If $\phi$ satisfies Equation \eqref{Lambda circ ad_k}, then for all $\kappa\in \mfk$,
		\begin{align*}
			\phi\circ \Ad ({\exp \kappa})
			&= \phi\circ \exp(\on{ad}(\kappa))\\
			&= \exp(\phi(\on{ad}(\kappa)))\\
			&= \exp(\on{ad}_{\lambda_*(\kappa)}\circ \phi)\\
			&= \Ad_{\exp(\lambda_*(\kappa))}\circ \phi\\
			&= \Ad_{\lambda(\exp (\kappa))}\circ \phi.
		\end{align*}
		Therefore, Equation \eqref{Lambda circ Ad_k} is satisfied by elements of $K$ of the forms $\exp\kappa$ for any $\kappa\in \mfk$. Since $\phi$ and $\Ad$ are group homomorphisms, and $K$ is generated by elements of the form $\exp\kappa$, Equation \eqref{Lambda circ Ad_k} holds for every $k\in K$.
	\end{proof}
	

	The correspondence may be simplified. Since $\phi|_\mfk=0$, the space of such maps $\phi$ is naturally identified with the subset
	\[\left\{\phi\in \mathrm{Hom}\left(\frac{\mfg}{\mfk},\mfk\right): \phi\circ \Ad_k = \Ad_{\lambda(k)}\circ \phi \text{ for all $k\in K$}\right\}.\]

	\subsection{Invariant Complexions}
	In this section, we classify invariant complexions in terms of linear maps, following an approach analogous to that used for invariant connections. Recall that a complexion $J$ (which is an endomorphism) is invariant if $J\circ L_{g*}=L_{g*}\circ J$ for all $g\in G$. We assume in this section that $H$ is a complex Lie group and $j$ is an invariant almost complex structure on $M$.
	
	\begin{lemma}
		Let $J^0$ and $J$ be two complexions on a homogeneous $H$-bundle related by $\Psi \in \Omega_{\Ad}^{0,1}(P,J^0;\mfh)$, i.e., 
		$J=J^0+(\Psi(\cdot))^\#$. Suppose that $J^0$ is invariant. Then $J$ is invariant if and only if $\Psi$ is invariant. 
	\end{lemma}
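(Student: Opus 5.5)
The plan is to compare $L_{g*}\circ J\circ L_{g*}^{-1}$ with $J$ directly, using the relation $J-J^0=(\Psi(\cdot))^\#$ together with the invariance of $J^0$. The key identity to establish first is how left translations act on fundamental vector fields of the right $H$-action. Since the $G$-action commutes with the $H$-action, for $\eta\in\mfh$, $p\in P$ and $g\in G$ we have
\[L_{g*}(\eta_p^\#)=\frac{\mrd}{\mrd t}\Big|_{t=0} g\,(p\exp(t\eta))=\frac{\mrd}{\mrd t}\Big|_{t=0}(gp)\exp(t\eta)=\eta_{gp}^\#.\]
In other words, $L_{g*}$ maps $\eta^\#$ at $p$ to $\eta^\#$ at $gp$, with no twisting of the Lie algebra element.

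With this in hand, fix $g\in G$, $p\in P$ and $X\in T_pP$. Using $L_{g*}J^0=J^0L_{g*}$, we compute
\[(L_{g*}\circ J-J\circ L_{g*})(X)=L_{g*}\big((\Psi_p(X))^\#_p\big)-(\Psi_{gp}(L_{g*}X))^\#_{gp}=\big(\Psi_p(X)-(L_g^*\Psi)_p(X)\big)^\#_{gp}.\]
The map $\eta\mapsto\eta^\#_{gp}$ is injective, because the $H$-action is free and so the map $\mfh\to V_{gp}P$ is an isomorphism. Hence $L_{g*}J=JL_{g*}$ at $p$ for all $X$ if and only if $L_g^*\Psi=\Psi$ at $p$. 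Ranging over all $g$ and $p$ gives both directions of the equivalence.

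The only real point is the identity $L_{g*}\eta^\#=\eta^\#$, which rests on the compatibility condition $g(ph)=(gp)h$ in the definition of an equivariant bundle. Everything else is a formal rearrangement. It may be worth noting that, by Theorem \ref{space of complexions as an affine space}, $\Psi$ automatically lies in $\Omega^{0,1}_{\Ad}(P,J^0;\mfh)$, so the lemma really says that invariant complexions form an affine space modeled on the invariant elements of that space.
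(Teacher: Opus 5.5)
Your proof is correct and follows the paper's argument: the paper also uses the invariance of $J^0$ to reduce invariance of $J$ to $L_{g*}(\Psi(X))^\#=(\Psi(L_{g*}X))^\#$, and then reads this as $L_g^*\Psi=\Psi$. You additionally make explicit the two facts the paper uses silently, namely $L_{g*}\eta^\#_p=\eta^\#_{gp}$ (from $g(ph)=(gp)h$) and the injectivity of $\eta\mapsto\eta^\#_{gp}$; these are exactly what justify that final step.
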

	
	\begin{proof}
		Since $J^0$ is invariant, $J$ is invariant if and only if $L_{g*}(\Psi(X))^\#=(\Psi(L_{g*}(X)))^\#$ for all $g\in G$ and $X\in TP$. This is equivalent to $L_g^*\Psi=\Psi$ for all $g\in G$.
	\end{proof}
	
	Therefore, by the one-to-one correspondence between tensorial forms and the subset of linear maps between Lie algebras (Theorem \ref{one-to-one correspondence between tensorial forms and phi,psi}), we get the following result.
	
	\begin{theorem} 
		Let $\lambda: K \to H$ be a Lie group homomorphism and 
		\[
		P = G \times_\lambda H \to (M, j).
		\]
		Suppose there exists an invariant complexion on $P$. Then there is a one-to-one correspondence between the space of invariant complexions on $P$ and the subset of $\mathrm{Hom}(\mathfrak{g}_\mathbb{C}, \mathfrak{h})$ consisting of linear maps 
		\[
		\psi: \mathfrak{g}_\mathbb{C} \to (\mathfrak{h}, i_H)
		\]
		satisfying the following conditions:
		\begin{enumerate}
			\item $\psi|_{\mathfrak{k}_\mathbb{C}} = 0$.
			\item $\psi \circ \mathrm{Ad}_k = \mathrm{Ad}_{\lambda(k)} \circ \psi$ for all $k \in K$.
			\item The restriction $\psi|_{\mathfrak{g}_\mathbb{C}/\mathfrak{k}_\mathbb{C}}$ satisfies
			\[
			\psi \circ j_{\mathfrak{g}} = -i \psi,
			\]
			where $j_{\mathfrak{g}} : \mathfrak{g}/\mathfrak{k} \to \mathfrak{g}/\mathfrak{k}$ is the almost complex structure on $\mathfrak{g}/\mathfrak{k}$ induced by $j_{x_0}$ via the isomorphism $r_{x_0*} : \mathfrak{g}/\mathfrak{k} \to T_{x_0}M$, i.e.,
			\[
			j_{\mathfrak{g}} = r_{x_0*}^{-1} \circ j_{x_0} \circ r_{x_0*}.
			\]
			If this is the case, we say $\psi$ is anti-linear with respect to $j_\mfg$.
		\end{enumerate}
		If $K$ is connected, then condition (2) may be replaced by:
		\[
		\psi \circ \mathrm{ad}_\kappa = \mathrm{ad}_{\lambda_{*,e}(\kappa)} \circ \psi \quad \text{for all } \kappa \in \mathfrak{k}.
		\]
		\label{classification of invariant complexions: G times_lambda H}
	\end{theorem}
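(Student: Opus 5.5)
Fix an invariant complexion $J^0$ on $P$; one exists by hypothesis. By Theorem \ref{space of complexions as an affine space}, every complexion $J$ can be written uniquely as $J=J^0+(\Psi(\cdot))^\#$ with $\Psi\in\Omega^{0,1}_{\Ad}(P,J^0;\mfh)$. By the preceding lemma, $J$ is invariant if and only if $\Psi$ is invariant. So it suffices to put the invariant tensorial $(0,1)$-forms $\Psi$ (relative to $J^0$) in bijection with the linear maps $\psi$ described in the statement. The plan is to send $\Psi$ to
\[
\psi(\xi):=\Psi_{p_0}(\xi^*_{p_0}),
\]
extended complex-linearly to $\mfg_\mbc$. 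Since $\mfh$ is a complex vector space via $i_H$, this extension makes sense.

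\textbf{Step 1 (reduction to real tensorial forms).} An $\mfh$-valued $(0,1)$-form is determined by its values on real vectors. Being $(0,1)$ means $\Psi\circ J^0=-i\Psi$, as in the proof of Theorem \ref{space of complexions as an affine space}. So $\Psi$ is simply an invariant real tensorial one-form of type $\Ad$ that is, in addition, anti-linear with respect to $J^0$. Theorem \ref{one-to-one correspondence between tensorial forms and phi,psi} already puts invariant tensorial one-forms in bijection with maps $\phi:\mfg\to\mfh$ satisfying $\phi|_\mfk=0$ and $\phi\circ\Ad_k=\Ad_{\lambda(k)}\circ\phi$. Complexifying gives conditions (1) and (2). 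It then remains only to show that anti-linearity of $\Psi$ corresponds exactly to condition (3).

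\textbf{Step 2 (anti-linearity $\Leftrightarrow$ condition (3)).} This step carries the real content and is the one I expect to be the main obstacle.
\begin{itemize}
\item Since $\Psi$ is horizontal and $\pi_*\circ J^0=j\circ\pi_*$, the value $\Psi_{p_0}(X)$ depends only on $\pi_*X\in T_{x_0}M\cong\mfg/\mfk$. This gives a well-defined map $\bar\Psi$ on $\mfg/\mfk$ satisfying $\Psi_{p_0}=\bar\Psi\circ r_{x_0*}^{-1}\circ\pi_*$.
\item Concretely, $\pi_*\xi^*_{p_0}=r_{x_0*}(\xi)$, so $\psi$ descends to $\mfg/\mfk$, and there $\psi=\Psi_{p_0}\circ(\text{any lift})$.
\item Take $X=\xi^*_{p_0}$. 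Then $J^0X$ projects to $j_{x_0}r_{x_0*}(\xi)=r_{x_0*}(j_\mfg\xi)$.
\item Combining these, $\Psi_{p_0}(J^0X)=\psi(j_\mfg\xi)$. Hence $\Psi\circ J^0=-i\Psi$ at $p_0$ is equivalent to $\psi\circ j_\mfg=-i\psi$ on $\mfg/\mfk$.
\item To pass from $p_0$ to all of $P$, note that both $J^0$ and $\Psi$ commute with the $G$-action and the $H$-action, up to $\Ad$, which commutes with $i_H$. So anti-linearity at $p_0$ propagates to every point.
\item Conversely, if $\psi$ satisfies (1)--(3), build $\Psi$ as in Theorem \ref{one-to-one correspondence between tensorial forms and phi,psi}. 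The same computation shows it is $(0,1)$ relative to $J^0$.
\end{itemize}
The delicate point is making sure that $J^0$ descends compatibly via $\pi_*$, rather than needing any knowledge of $J^0$ on lifts $\xi^*_{p_0}$. Horizontality of $\Psi$ is exactly what lets us ignore vertical components.

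\textbf{Step 3 (connected $K$).} When $K$ is connected, the replacement of (2) by its infinitesimal version is identical to the argument in Theorem \ref{classification of invariant connections (Gx_lambda H)}. One exponentiates $\psi\circ\on{ad}_\kappa=\on{ad}_{\lambda_*\kappa}\circ\psi$ to get $\psi\circ\Ad_{\exp\kappa}=\Ad_{\lambda(\exp\kappa)}\circ\psi$, and uses that $K$ is generated by $\exp\mfk$. The converse direction comes from differentiating (2).
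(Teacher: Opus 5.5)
Your proposal is correct and follows essentially the same route as the paper. The paper fixes a complexion $J^0$ (it must be invariant for the lemma to apply, as you correctly require) and uses the affine structure of Theorem~\ref{space of complexions as an affine space} together with the invariance lemma. It then invokes Theorem~\ref{one-to-one correspondence between tensorial forms and phi,psi} for conditions (1)--(2) and checks that anti-linearity of $\Psi$ corresponds to $\psi\circ j_\mfg=-i\psi$. Your Step 2 supplies the justifications that the paper's short chain of equivalences leaves implicit. First, horizontality of $\Psi$ together with $\pi_*\circ J^0=j\circ\pi_*$ gives $\Psi_{p_0}(J^0\xi^*_{p_0})=\psi(j_\mfg\xi)$. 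Second, $G$-invariance and $H$-equivariance, with $\Ad$ commuting with $i_H$, propagate anti-linearity from $p_0$ to all of $P$.
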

	\vspace{-1cm}
	\begin{proof}
		Let $J^0$ be a fixed complexion on $P$ and $J$ another invariant complexion. Suppose $\Psi$ is the $(0,1)$-tensorial form (with respect to $J^0$) such that $J=J^0+(\Psi(\cdot))^\#$ (Theorem \ref{space of complexions as an affine space}). It remains to show that the corresponding linear map $\psi$ of a $(0,1)$-tensorial form $\Psi$ with respect to $J^0$ (Theorem \ref{one-to-one correspondence between tensorial forms and phi,psi}) is antilinear with respect to $j_{\mfg}$ if and only if $\Psi$ is antilinear with respect to $J^0$.

		Let $p_0\in P_{x_0}$ and $r_{p_0}:G\to P$ be the map sending $g$ to $gp$. We see that
		\begin{align*}
			\psi\circ j_\mfg&=-i\psi\\
			\iff 
			\Psi_{p_0}\circ r_{p_0*}\circ j_\mfg &= -i \Psi_{p_0}\circ r_{p_0*}\\
			\iff 
			\Psi_{p_0} \circ J_{p_0} &= -i\Psi.
		\end{align*}
		The case where $K$ is connected can be proved analogously to the proof of Theorem \ref{classification of invariant connections (Gx_lambda H)}.
	\end{proof}

	The correspondence may be simplified. Since $\psi|_\mfk=0$, the space of such maps $\psi$ is naturally identified with the subset
	\[
	\left\{ \psi \in \mathrm{Hom}\left( \frac{\mathfrak{g}_\mathbb{C}}{\mathfrak{k}_\mathbb{C}}, \mathfrak{h} \right) 
	:
	\psi \circ \mathrm{Ad}_k = \mathrm{Ad}_{\lambda(k)} \circ \psi \text{ for all } k \in K, \quad \psi \circ j_{\mathfrak{g}} = -i\psi
	\right\}.
	\]

	\subsection{Integrable Complexions on Homogeneous Bundles over Symmetric Spaces}
	In this section, we will show that the canonical complexion (the complexion induced by the canonical connection) is integrable. After that, we determine the condition for an arbitrary complexion to be integrable when the base $M$ is a symmetric space.
	
	We first assume that $M$ is a reductive homogeneous space equipped with an (integrable) invariant complex structure $j$, and $P\to M$ is a homogeneous $H$-bundle with $H$ being a complex Lie group. Let $\mfg=\mfk\oplus\mfm$ be an $\Ad_K$-invariant splitting. Recall that the canonical connection on $P$ is the invariant connection corresponding to the linear map $\Lambda:\mfg\to\mfh$ given by
	\begin{equation*}
		\Lambda(\xi):=
		\begin{cases}
			\lambda_{*,e}(\xi) & \xi\in \mfk,\\
			0 & \xi\in\mfm.
		\end{cases}
	\end{equation*}
	
	\begin{lemma}
		If $\kappa\in \mfk$, then 
		\[\kappa_{p_0}^*=(\lambda_*\kappa)_{p_0}^\#.\]
		If $\xi\in \mfm$, then 
		\[\xi_{p_0}^*=\widetilde{(\xi_{x_0}^*)}_{p_0}\]
		with respect to the canonical connection.
		\label{star: vertical and horizontal}
	\end{lemma}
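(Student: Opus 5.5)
The two claims are of different nature, so I will treat them separately. The first is a direct computation from the definition of $\lambda$ in Theorem \ref{classification of homogeneous bundles}; the second uses the explicit form of the canonical connection coming from Wang's classification (Theorem \ref{Wang classification theorem}).

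For $\kappa\in\mfk$ we have $\exp(t\kappa)\in K$. By the defining property $kp_0=p_0\lambda(k)$, this gives
\[\kappa^*_{p_0}=\frac{\mrd}{\mrd t}\Big|_{t=0}\exp(t\kappa)p_0=\frac{\mrd}{\mrd t}\Big|_{t=0}p_0\lambda(\exp(t\kappa))=\frac{\mrd}{\mrd t}\Big|_{t=0}p_0\exp(t\lambda_{*,e}\kappa)=(\lambda_*\kappa)^\#_{p_0}.\]
The middle step uses that $\lambda$ is a Lie group homomorphism, so $\lambda(\exp t\kappa)=\exp(t\lambda_{*,e}\kappa)$. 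This is the same computation already carried out in the proof of Theorem \ref{Wang classification theorem}, so it can simply be cited.

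For $\xi\in\mfm$, two facts identify $\xi^*_{p_0}$ as a horizontal lift.

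\textbf{Horizontality.} Let $\theta$ be the canonical connection. By Proposition \ref{wang classification theorem easier direction} and the correspondence in Theorem \ref{Wang classification theorem}, $\theta_{p_0}(\xi^*_{p_0})=\Lambda(\xi)$. Since $\Lambda|_\mfm=0$, we get $\theta_{p_0}(\xi^*_{p_0})=0$, so $\xi^*_{p_0}$ lies in the horizontal subspace $A_{p_0}$.

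\textbf{Projection.} Since $\pi(gp)=g\pi(p)$, we have
\[\pi_{*}(\xi^*_{p_0})=\frac{\mrd}{\mrd t}\Big|_{t=0}\exp(t\xi)x_0=\xi^*_{x_0},\]
where $\xi^*$ on $M$ denotes the fundamental vector field of the left $G$-action on $M$.

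Since $\pi_*|_{A_{p_0}}:A_{p_0}\to T_{x_0}M$ is an isomorphism, there is exactly one horizontal vector at $p_0$ projecting to $\xi^*_{x_0}$, namely $\widetilde{(\xi^*_{x_0})}_{p_0}$. Both facts above together show that $\xi^*_{p_0}$ is that vector.

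The only delicate point is notational rather than mathematical. The horizontal lift is defined for vector fields, so $\widetilde{(\xi^*_{x_0})}_{p_0}$ should be read as the value at $p_0$ of the horizontal lift of the field $\xi^*$ on $M$. Equivalently, it is the unique vector of $A_{p_0}$ over $\xi^*_{x_0}$, and this pointwise reading is the one used above. Apart from that, we only need that $\theta_{p_0}(\xi^*_{p_0})=\Lambda(\xi)$ holds for the connection constructed from $\Lambda$, which is built into Wang's correspondence.
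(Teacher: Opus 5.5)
Your proof is correct and follows essentially the same route as the paper: the $\mfk$ case is the identical computation via $\exp(t\kappa)p_0=p_0\lambda(\exp t\kappa)$, and for $\xi\in\mfm$ the paper also observes that the vertical component $(\theta(\xi^*_{p_0}))^\#=(\Lambda(\xi))^\#$ vanishes, so $\xi^*_{p_0}$ equals its horizontal part. You also spell out $\pi_*(\xi^*_{p_0})=\xi^*_{x_0}$ and the uniqueness of the horizontal vector over a given base vector, which the paper uses implicitly to identify that horizontal part with the lift.
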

	
	\begin{proof}
		By the definition of $\lambda:K\to H$, if $\kappa\in \mfk$, then
		\begin{align*}
			\kappa_{p_0}^*
			&=\frac{d}{dt}\Bigg|_{t=0} \exp t\kappa \cdot p_0\\
			&=\frac{d}{dt}\Bigg|_{t=0} p_0\cdot\lambda(\exp t\kappa)\\
			&=(\lambda_*\kappa)_{p_0}^\#.
		\end{align*}
		Now suppose $\xi\in \mfm$. The horizontal part of $\xi_{p_0}^*$ is $\widetilde{(\xi_{x_0}^*)}_{p_0}$. On the other hand, its vertical component is
		\[(\theta(\xi_{p_0}^*))^\#=(\Lambda(\xi))^\#=0.\]
	\end{proof}
	
	Denote by $J$ the canonical complexion. Via the isomorphisms $A_{p_0}\cong T_{x_0}M\cong \mfm$, we define $j_\mfm:\mfm\to\mfm$ so that the following diagram is commutative:
	
	\adjustbox{scale=1.2,center}{
		\begin{tikzcd}
			A_{p_0} \arrow[r, "J_{p_0}"] \arrow[d, "{\pi_{*,p_0}}"']                      & A_{p_0} \arrow[d, "{\pi_{*,p_0}}"]       \\
			T_{x_0}M \arrow[r, "j_{x_0}"]                                                 & T_{x_0}M                                 \\
			\mathfrak{m} \arrow[r, "j_{\mathfrak{m}}", dashed] \arrow[u, "{r_{x_0 *,e}}"] & \mathfrak{m} \arrow[u, "{r_{x_0 *,e}}"']
		\end{tikzcd}
	}
	
	In other words, if $\xi\in \mfm$, then
	\[(j_m\xi)_{x_0}^*=j_{x_0}(\xi_{x_0}^*), \quad (j_m\xi)_{p_0}^*=J_{p_0}(\xi_{p_0}^*).\]
	
	\begin{lemma}
		Let $P=G\times_\lambda H$ and $\xi,\zeta\in \mfm$. Then 
		\[\lambda_*(P_\mfk[\xi,j_m\zeta]) = i_H\lambda_*(P_\mfk[\xi,\zeta]),\]
		where $P_\mfk:\mfg\to\mfk$ is the projection onto $\mfk$ with respect to the decomposition $\mfg=\mfk\oplus\mfm$.
		\label{lemma for integrability of canonical complexion}
	\end{lemma}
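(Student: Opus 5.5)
The plan is to read the identity as a statement about the curvature of the canonical connection, and to extract it from the $G$-invariance of the canonical complexion $J$ by a Lie-derivative computation at $p_0$.

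\textbf{Rewriting as a curvature identity.} By Proposition \ref{wang classification theorem easier direction}(3), for $\xi,\zeta\in\mfm$ we have $\Lambda(\xi)=\Lambda(\zeta)=0$, so
\[F_{p_0}(\xi^*_{p_0},\zeta^*_{p_0})=-\Lambda([\xi,\zeta])=-\lambda_*(P_\mfk[\xi,\zeta]).\]
By Lemma \ref{star: vertical and horizontal}, $\xi^*_{p_0}$ and $\zeta^*_{p_0}$ are horizontal. Moreover $(j_\mfm\zeta)^*_{p_0}=J_{p_0}\zeta^*_{p_0}$. The claim is therefore equivalent to
\[F_{p_0}(\xi^*_{p_0},J\zeta^*_{p_0})=i_H\,F_{p_0}(\xi^*_{p_0},\zeta^*_{p_0}),\]
that is, $F$ is complex linear in its second slot at $p_0$ for these arguments. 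I would establish this pointwise identity directly.

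\textbf{Main computation.} Since $J$ is $G$-invariant, $\mathcal L_{\xi^*}J=0$, hence $[\xi^*,JY]=J[\xi^*,Y]$ for every vector field $Y$ on $P$. I would take $Y=\tilde Z$, where $\tilde Z$ is the horizontal lift (for the canonical connection $\theta$) of a vector field $Z$ on $M$ with $Z_{x_0}=\zeta^*_{x_0}$. Then $J\tilde Z=\widetilde{jZ}$ is again horizontal.

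Next I apply $\theta$ at $p_0$ to both sides. For the right-hand side, $J$ acts on vertical vectors by $i_H$ and preserves the horizontal distribution, so $\theta(J[\xi^*,\tilde Z])=i_H\theta([\xi^*,\tilde Z])$. For the left-hand side, I use $\mrd\theta(U,W)=U\theta(W)-W\theta(U)-\theta([U,W])$. Taking $W$ horizontal kills $\theta(W)$, and at $p_0$ both arguments are horizontal, so $\mrd\theta=F$ there. This gives
\[\theta_{p_0}([\xi^*,\tilde W])=-\tilde W_{p_0}\bigl(\theta(\xi^*)\bigr)-F_{p_0}(\xi^*,\tilde W).\]
Applying this with $\tilde W=\tilde Z$ and with $\tilde W=\widetilde{jZ}$ and comparing, the desired identity follows once I show
\[\widetilde{jZ}_{p_0}\bigl(\theta(\xi^*)\bigr)=i_H\,\tilde Z_{p_0}\bigl(\theta(\xi^*)\bigr).\]

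\textbf{The function $f=\theta(\xi^*)$ (main obstacle).} By invariance of $\theta$ and $\Ad$-equivariance,
\[f(gp_0h)=\Ad_{h^{-1}}\Lambda(\Ad_{g^{-1}}\xi).\]
Near $p_0$ I would write horizontal curves as $g(t)p_0h(t)$, with $g(t)$ tangent to an element of $\mfm$ and $h(t)$ chosen to cancel the resulting vertical component. The derivative $\tilde Z_{p_0}f$ should then reduce to an expression of the form $-\Lambda([\zeta,\xi])$ plus $\on{ad}$-terms involving $\Lambda(\xi)=0$. Since $\Lambda([\zeta,\xi])$ has the same shape as the curvature term, this risks making the argument circular. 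To break the circularity, I would instead use $\Ad_K$-invariance of $j$ on $\mfm$ (so $[\kappa,j_\mfm\zeta]=j_\mfm[\kappa,\zeta]$ for $\kappa\in\mfk$) together with the fact that $\lambda_*$ intertwines $\on{ad}_\mfk$ with the complex-linear $\on{ad}$ on $\mfh$. If the direct computation stays circular, I would fall back on integrability of $j$ (vanishing Nijenhuis tensor of $j_\mfm$ on $\mfm$ projected to $\mfk$) to supply the missing relation. I expect this bookkeeping of $\tilde Z f$ to be the hard step.
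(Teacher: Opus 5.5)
The step you flag as the hard one cannot be completed, because the identity itself is false in general. The circularity you ran into is not a bookkeeping artefact.

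\textbf{Counterexample.} Take $\mbcp^1=SU(2)/S(U(1)\times U(1))$ with its frame bundle, $H=\mbc^*$, as in the paper's last section. There $j_{\mfm}\iota(z)=\iota(iz)$, and the paper's formula for $\lambda_*$ specialised to $n=1$ gives $\lambda_*\on{diag}(ia,-ia)=-2ia$. For $\xi=\zeta=\iota(1)$ the right-hand side is $i_H\lambda_*(P_{\mfk}[\xi,\xi])=0$. On the other hand $[\iota(1),\iota(i)]=\on{diag}(-2i,2i)\in\mfk$, so the left-hand side is $\lambda_*\on{diag}(-2i,2i)=4i\neq 0$.

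Your curvature reformulation makes the failure transparent. The identity $F_{p_0}(\xi^*,J\zeta^*)=i_HF_{p_0}(\xi^*,\zeta^*)$, combined with antisymmetry, gives $F(JX,JY)=-F(X,Y)$ on horizontal vectors at $p_0$. But on a Hermitian symmetric space with $j_{\mfm}=\on{ad}_{Z_0}|_{\mfm}$ for some $Z_0$ central in $\mfk$ (e.g.\ $\mbcp^n$), one has $[j_{\mfm}\xi,j_{\mfm}\zeta]=[\xi,\zeta]$, hence $F(JX,JY)=F(X,Y)$. Together these force the canonical curvature to vanish, which it does not.

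\textbf{Why your computation loops.} It is vacuous. Because $\theta$ is $G$-invariant, $\mathcal L_{\xi^*}\theta=0$. Hence $\theta([\xi^*,\tilde W])=\xi^*(\theta(\tilde W))-(\mathcal L_{\xi^*}\theta)(\tilde W)=0$ for every horizontal field $\tilde W$, so both sides of $\theta([\xi^*,J\tilde Z])=i_H\theta([\xi^*,\tilde Z])$ vanish identically. Concretely, $\tilde Z_{p_0}=\zeta^*_{p_0}$, $\widetilde{jZ}_{p_0}=(j_{\mfm}\zeta)^*_{p_0}$, and $\zeta^*_{p_0}(\theta(\xi^*))=\Lambda([\xi,\zeta])=\lambda_*(P_{\mfk}[\xi,\zeta])$. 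So the relation $\widetilde{jZ}_{p_0}f=i_H\tilde Z_{p_0}f$ that you still need is word for word the lemma.

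Your fallbacks cannot supply it. $\Ad_K$-invariance of $j_{\mfm}$ is a statement about $[\mfk,\mfm]$. Integrability of an invariant $j$ constrains only the $\mfm$-components of brackets of elements of $\mfm$, and it is automatic on symmetric spaces, where $[\mfm,\mfm]\subseteq\mfk$. Neither says anything about $P_{\mfk}[\mfm,\mfm]$.

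\textbf{The paper's proof.} It hides the same defect. It replaces the vector field $J\zeta^*$ by the fundamental field $(j_{\mfm}\zeta)^*$ inside a Lie bracket, although these fields merely coincide at $p_0$. Carried out correctly, its argument collapses to your tautology. What is actually true in the Hermitian symmetric case is $[j_{\mfm}\xi,j_{\mfm}\zeta]=[\xi,\zeta]$ and $[j_{\mfm}\xi,\zeta]=-[\xi,j_{\mfm}\zeta]$. These make the canonical curvature of type $(1,1)$ and give $F^{0,2}=0$ directly, without the stated identity.
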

	
	\begin{proof}
		Let $\xi,\zeta\in\mfm$. Since $J$ is induced by an invariant connection, it follows that $J$ is invariant and hence
		\[\mathcal{L}_{\xi^*}J=0.\]
		Therefore,
		\[[\xi_{p_0}^*,J(\zeta_{p_0}^*)]=J[\xi_{p_0}^*,\zeta_{p_0}^*].\]
		Using the map $j_\mfm:\mfm\to\mfm$, this is equivalent to
		\[[\xi,j_\mfm\zeta]_{p_0}^*=J([\xi,\zeta]_{p_0}^*).\]
		Let $P_\mfk\xi$ and $P_\mfm\xi$ be the $\mfk$- and $\mfm$-components of $\xi$ respectively. Using Lemma \ref{star: vertical and horizontal}, the left hand side the equation above is
		\begin{align*}
			(P_\mfk[\xi,j_m\zeta])_{p_0}^*+(P_\mfm[\xi,j_m\zeta])_{p_0}^*
			&=(\lambda_*(P_\mfk[\xi,j_m\zeta]))_{p_0}^\#+(\pi|_A)_{*}^{-1}{((P_\mfm[\xi,j_m\zeta])_{x_0}^*}),
		\end{align*}
		while the right hand side of the equation is
		\begin{align*}
			J(P_\mfk[\xi,\zeta])_{p_0}^*+J(P_\mfm[\xi,\zeta])_{p_0}^*
			&=J((\lambda_*P_\mfk[\xi,\zeta])_{p_0}^\#)+J((\pi|_A)_*^{-1}(P_\mfm[\xi,\zeta])_{x_0}^*))\\
			&=(i_H P_\mfk[\xi,\zeta])_{p_0}^\#+(\pi|_A)_*^{-1}((j_\mfm P_m[\xi,\zeta])_{x_0}^*).
		\end{align*}
		Comparing the vertical components of both sides, we get the desired result.
	\end{proof}
	
	\begin{theorem}
		Suppose $M$ is reductive homogeneous equipped with the complex structure $j$, and $P$ a homogeneous $H$-bundle over $M$. Then the canonical complexion on $P$ is integrable.
		\label{canonical complexion is integrable}
	\end{theorem}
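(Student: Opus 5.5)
By Corollary \ref{integrability condition and F_A}, since $j$ is integrable it suffices to show that the $(0,2)$-part $F^{0,2}$ of the curvature $F$ of the canonical connection vanishes. The plan is to evaluate this at the single point $p_0$ and then spread it over $P$.

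\textbf{Reduction to $p_0$.} The form $F^{0,2}$ is tensorial, because $F$ is tensorial and $J$ commutes with $R_{h*}$. It is also $G$-invariant, because both $F$ and $J$ are invariant. Every point of $P$ has the form $g p_0 h$, so it is enough to prove $F^{0,2}_{p_0}=0$. Since $F$ is horizontal and $J$ preserves the horizontal distribution $A$, it is enough to test $F^{0,2}_{p_0}$ on horizontal vectors. By Lemma \ref{star: vertical and horizontal}, these are exactly the vectors $\xi^*_{p_0}$ with $\xi\in\mfm$. Moreover $J(\xi^*_{p_0})=(j_\mfm\xi)^*_{p_0}$.

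\textbf{Curvature at $p_0$.} By Proposition \ref{wang classification theorem easier direction}(3), for $\xi,\zeta\in\mfm$,
\[F_{p_0}(\xi^*,\zeta^*)=[\Lambda\xi,\Lambda\zeta]-\Lambda[\xi,\zeta]=-\lambda_*(P_\mfk[\xi,\zeta]),\]
using $\Lambda|_\mfm=0$ and $\Lambda|_\mfk=\lambda_*$. Write $B(\xi,\zeta):=\lambda_*(P_\mfk[\xi,\zeta])$; it is antisymmetric. Substituting into the formula for $F^{0,2}$ from Proposition \ref{Nijenhius tensors of base and total space} gives
\[-F^{0,2}_{p_0}(\xi^*,\zeta^*)=B(\xi,\zeta)-B(j_\mfm\xi,j_\mfm\zeta)+iB(j_\mfm\xi,\zeta)+iB(\xi,j_\mfm\zeta).\]

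\textbf{Cancellation.} Lemma \ref{lemma for integrability of canonical complexion} states $B(\xi,j_\mfm\zeta)=i_H B(\xi,\zeta)$. Combining it with antisymmetry gives
\[B(j_\mfm\xi,\zeta)=-B(\zeta,j_\mfm\xi)=-i_HB(\zeta,\xi)=i_H B(\xi,\zeta).\]
Applying the two relations together yields $B(j_\mfm\xi,j_\mfm\zeta)=i_H^2B(\xi,\zeta)=-B(\xi,\zeta)$.

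Here the factor $i$ in the formula for $F^{0,2}$ must be read as $i_H$ acting on $\mfh$. With that reading, and using $i_H^2=-1$, the four terms become
\[B+B+i_H(i_HB)+i_H(i_HB)=2B-2B=0.\]
Hence $F^{0,2}_{p_0}=0$, and by the reduction above $F^{0,2}\equiv 0$, so the canonical complexion is integrable.

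\textbf{Main obstacle.} The delicate step is the sign convention for $F^{0,2}$. With the formula as printed, whether the terms cancel (giving $0$) or add (giving $4B$) depends on whether the sign in front of $F(J\cdot,J\cdot)$ is correct for the $(0,2)$-projection. I would recheck this directly, by computing $F$ on $(0,1)$-vectors $X+iJX$ and $Y+iJY$ and then taking real parts. Under the correct projection, the relation $B(\cdot,j_\mfm\cdot)=i_HB$ says precisely that $B$ is complex bilinear, hence of type $(1,1)$, so its $(0,2)$-part vanishes. This type-$(1,1)$ statement is the robust form of the argument, and the written formula should be checked against it.
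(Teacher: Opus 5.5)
Your argument is the paper's proof essentially verbatim. Your explicit reduction to $p_0$, via $G$-invariance and $H$-equivariance of $F^{0,2}$, is a welcome addition. Your sign worry is unfounded: the printed expression is $F(X+iJX,Y+iJY)=4F^{0,2}(X,Y)$, with $i$ acting on values by $i_H$.

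The genuine gap is the step borrowed from Lemma~\ref{lemma for integrability of canonical complexion}. That lemma is false, so the cancellation has nothing to stand on.
\begin{itemize}
\item \emph{A direct counterexample.} Taking $\zeta=\xi$, the lemma asserts $\lambda_*(P_\mfk[\xi,j_\mfm\xi])=0$ for all $\xi\in\mfm$. On $\mbcp^1$, with $\iota$ as in Section~6, take $\xi=\iota(1)$ and $j_\mfm\xi=\iota(i)$. Then $[\xi,j_\mfm\xi]=-2\,\mathrm{diag}(i,-i)$, while the formula of Section~6 gives $\lambda_*(\mathrm{diag}(i,-i))=-2i\neq0$ for the reduced frame bundle.
\item \emph{A structural reason.} Your derived identity $B(j_\mfm\xi,j_\mfm\zeta)=-B(\xi,\zeta)$ forces $B=0$ whenever $\dim_\mbc M=1$. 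Indeed, any antisymmetric form on a $2$-dimensional space satisfies $B(j_\mfm\cdot,j_\mfm\cdot)=\det(j_\mfm)\,B=B$.
\item \emph{Where the paper's proof of the lemma breaks.} It replaces $J\zeta^*$ by $(j_\mfm\zeta)^*$ inside $[\xi^*,\cdot\,]$, but the two fields agree only at $p_0$. The vertical part of $[\xi^*,J\zeta^*-(j_\mfm\zeta)^*]_{p_0}$ is exactly $\bigl(\lambda_*P_\mfk[\xi,j_\mfm\zeta]-i_H\lambda_*P_\mfk[\xi,\zeta]\bigr)^\#$, which is the quantity the lemma claims vanishes.
\end{itemize}
Your closing remark also mislabels types. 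An antisymmetric $B$ with $B(\cdot,j_\mfm\cdot)=i_HB$ is $\mbc$-bilinear, hence of type $(2,0)$, not $(1,1)$. In the $\mbcp^1$ example $B$ genuinely is of type $(1,1)$; that is why $F^{0,2}=0$ there, and it is exactly what contradicts the lemma.

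What the argument actually needs is that the $\mbc$-bilinear extension of $B$ (with $i_H$ on values) vanish on $\mfm^{0,1}\times\mfm^{0,1}$, i.e.\ $\lambda_*\bigl(P_{\mfk_\mbc}[\mfm^{0,1},\mfm^{0,1}]\bigr)=0$. This holds for Hermitian symmetric spaces such as $\mbcp^n$. There $j_\mfm=\on{ad}_Z$ for some $Z$ in the centre of $\mfk$, so $\mfm^{0,1}$ is the $(-i)$-eigenspace of $\on{ad}_Z$. It is therefore abelian, because $\on{ad}_Z$ has no eigenvalue $-2i$ on $\mfg_\mbc$.

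The condition does not follow from $\Ad_K$-invariance and integrability of $j$, and without it the statement fails. Take $L=SL(2,\mbc)$ as the symmetric space $(L\times L)/\Delta L$, with $(a,b)\cdot x=axb^{-1}$, $\mfm=\{(X,-X)\}$ and $j_\mfm(X,-X)=(iX,-iX)$. Let $H=SL(2,\mbc)$ with $\lambda(l,l)=\bar l$. For $\xi=(X,-X)$ and $\zeta=(Y,-Y)$ one gets $F_{p_0}(\xi^*,\zeta^*)=-\overline{[X,Y]}$, and at $p_0$
\[F(\xi^*,\zeta^*)-F(J\xi^*,J\zeta^*)+i_H\bigl(F(J\xi^*,\zeta^*)+F(\xi^*,J\zeta^*)\bigr)=-4\,\overline{[X,Y]}\neq0 .\]
So the canonical complexion is not integrable. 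No rearrangement of the four-term cancellation can close the gap in the stated generality. A hypothesis of the above kind has to be added, and under it the vanishing of $F^{0,2}_{p_0}$ is immediate.
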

	\begin{proof}
		Recall that the canonical almost complex structure $J$ is integrable if and only if the $(0,2)$-part $F^{0,2}$ of the curvature form $F$ vanishes (Corollary \ref{integrability condition and F_A}). By Proposition \ref{wang classification theorem easier direction}, the curvature form $F$ is related to $\Lambda$ by
		\[F(\xi_{p_0}^*,\zeta_{p_0}^*)=[\Lambda(\xi),\Lambda(\zeta)]-\Lambda[\xi,\zeta],\quad \xi,\zeta\in \mfm.\]
		By Lemma \ref{star: vertical and horizontal} , if either $\xi$ or $\zeta$ is an element of $\mfk$, then $F(\xi_{p_0}^*,\zeta_{p_0}^*)$ vanishes. Hence, it suffices to show 
		\[F^{0,2}(\xi_{p_0}^*,\zeta_{p_0}^*)=0\]
		whenever $\xi$ and $\zeta$ are elements of $\mfm$. Recall that $\Lambda|_\mfm=0$ and $\Lambda|_\mfk=\lambda_*$. Thus, if $\xi$ and $\zeta$ are from $\mfm$, when evaluated at $p_0$, we have
		\[F(\xi^*,\zeta^*)=[\Lambda(\xi),\Lambda(\zeta)]-\Lambda[\xi,\zeta]=-\Lambda(P_\mfk[\xi,\zeta])-\Lambda(P_\mfm[\xi,\zeta])=-\lambda_*(P_\mfk[\xi,\zeta]).\]
		Therefore, it follows from Lemma \ref{lemma for integrability of canonical complexion} that
		\begin{align*}
			F^{0,2}(\xi^*,\zeta^*)
			&=F(\xi^*,\zeta^*)-F(J\xi^*,J\zeta^*) + i_H\left(F(\xi^*,J\zeta^*)+F(J\xi^*,\zeta^*)\right)\\
			&={{-\lambda_*(P_\mfk[\xi,\zeta])}}+\lambda_*(P_\mfk[j_m\xi,j_m\zeta]){-i_H\lambda_*(P_\mfk[\xi,j_m\zeta])}-i_H\lambda_*(P_\mfk[j_m\xi,\zeta])\\
			&=0.
		\end{align*}
	\end{proof}
	
	In the following discussion, we examine the conditions for any complexion to be integrable. Now we assume $M$ is a symmetric space.
	
	\begin{definition}[Symmetric Spaces]
		A \emph{symmetric space} is a triple $(G,H,\sigma)$ consisting of a connected Lie group $G$, a closed subgroup $K$ of $G$ and an involutive automorphism $\sigma$ of $G$ such that $K$ lies between $G_\sigma$ and the identity component of $G_\sigma$, where $G_\sigma$ denotes the closed subgroup of $G$
		\[G_\sigma=\{g\in G: \sigma(g)=g\}.\]
		\label{definition: symmetric spaces}
	\end{definition}
	\vspace{-1cm}
	Given a symmetric space $(G,H,\sigma)$, the coset manifold $M=G/H$ is a homogeneous space. In fact, it is reductive homogeneous. First note that since $\sigma^2=\operatorname{id}_G$, it follows that $\sigma_*$ has only two eigenvalues which are $1$ and $-1$.
	
	\begin{proposition}
		Let $\mfm\subset\mfg$ be the eigenspace of $\sigma_*$ with eigenvalue $-1$. Then 
		\begin{enumerate}
			\item The subspace $\mfk$ is the eigenspace of $\sigma_*$ with eigenvalue $1$. In particular,
			\[\mfg=\mfk\oplus\mfm.\]
			\item The subspaces $\mfg,\mfk,\mfm$ satisfy
			\[[\mfk,\mfk]\subseteq\mfk, \quad [\mfk,\mfm]\subseteq\mfm,\quad [\mfm,\mfm]\subseteq \mfk.\]
		\end{enumerate}
	\end{proposition}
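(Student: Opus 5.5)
The plan is to work with the differential $\sigma_*:\mfg\to\mfg$, which is a Lie algebra automorphism with $\sigma_*^2=\on{id}$, since $\sigma$ is an involutive automorphism of $G$. First I would show that $\mfg$ splits into the $(\pm1)$-eigenspaces of $\sigma_*$. Set $P_\pm:=\frac12(\on{id}\pm\sigma_*)$. Then $P_++P_-=\on{id}$, $P_\pm^2=P_\pm$ and $P_+P_-=0$, so $\mfg=\mfg_+\oplus\mfg_-$, where $\mfg_\pm$ is the $(\pm1)$-eigenspace. By definition $\mfm=\mfg_-$, so part (1) reduces to proving $\mfk=\mfg_+$.

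To identify $\mfg_+$, I would use that $K$ lies between $G_\sigma$ and its identity component $(G_\sigma)_0$. Hence $K$ and $G_\sigma$ share the same identity component, so $\mfk=\on{Lie}(G_\sigma)$. For the inclusion $\on{Lie}(G_\sigma)\subseteq\mfg_+$: if $\xi\in\on{Lie}(G_\sigma)$, then $\exp(t\xi)\in G_\sigma$ for all $t$. Since $\sigma(\exp t\xi)=\exp(t\sigma_*\xi)$, differentiating at $t=0$ gives $\sigma_*\xi=\xi$. For the reverse inclusion: if $\sigma_*\xi=\xi$, then $\sigma(\exp t\xi)=\exp t\xi$, so the one-parameter subgroup lies in the closed subgroup $G_\sigma$. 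Therefore $\xi\in\on{Lie}(G_\sigma)$, using the standard characterisation of the Lie algebra of a closed subgroup as $\{\xi:\exp(t\xi)\in G_\sigma\ \forall t\}$. This gives $\mfk=\mfg_+$ and hence $\mfg=\mfk\oplus\mfm$.

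For part (2), I would use that $\sigma_*$ preserves brackets. If $\sigma_*X=\epsilon_1X$ and $\sigma_*Y=\epsilon_2Y$ with $\epsilon_i\in\{\pm1\}$, then
\[\sigma_*[X,Y]=[\sigma_*X,\sigma_*Y]=\epsilon_1\epsilon_2[X,Y].\]
Taking the sign combinations $(+,+)$, $(+,-)$ and $(-,-)$ yields $[\mfk,\mfk]\subseteq\mfk$, $[\mfk,\mfm]\subseteq\mfm$ and $[\mfm,\mfm]\subseteq\mfk$, respectively.

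The main obstacle is the identification $\mfk=\mfg_+$. Specifically, one must justify that $\sigma_*$ is a Lie algebra automorphism satisfying $\sigma\circ\exp=\exp\circ\sigma_*$, which is naturality of $\exp$ under Lie group homomorphisms. One must also use the closed-subgroup description of $\on{Lie}(G_\sigma)$, together with the observation that sandwiching $K$ between $G_\sigma$ and $(G_\sigma)_0$ does not change the Lie algebra. Everything else is linear algebra.
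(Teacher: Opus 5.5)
Your proposal is correct and follows essentially the same route as the paper. You identify $\mfk$ with the $(+1)$-eigenspace of $\sigma_*$ using that $K$ sits between $G_\sigma$ and its identity component, and you derive the bracket relations from $\sigma_*[X,Y]=[\sigma_*X,\sigma_*Y]$. Where the paper simply asserts the identification of $\mfk$ and the splitting, you supply the details: the projections $\frac12(\on{id}\pm\sigma_*)$, the identity $\sigma\circ\exp=\exp\circ\sigma_*$, and the closed-subgroup description of $\on{Lie}(G_\sigma)$.
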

	
	\begin{proof}
		\begin{enumerate}
			\item Since $K$ lies between $G_\sigma$ and the identity component of $G_\sigma$, it follows that $\mfk$ is the eigenspace of $\sigma_*$ with eigenvalue 1. The splitting follows from linear algebra.
			\item Let $\kappa\in \mfk$ and $\xi\in \mfm$. Then
			\[\sigma_*([\kappa,\xi])=[\sigma_*\kappa,\sigma_*\xi]=[\kappa,-\xi]=-[\kappa,\xi].\]
			Therefore, $[\kappa,\xi]\in \mfm$. Similarly, one proves that $[\mfm,\mfm]\subseteq\mfk$.
		\end{enumerate}
	\end{proof}
	
	Now we assume $M$ is symmetric and is endowed with an invariant complex structure $j$, and $P$ is a homogeneous $H$-bundle over $M$. By the previous proposition, $M$ is reductive homogeneous. Therefore, the canonical connection $\theta^0$ on $P$ induces an integrable complexion $J^0$ on $P$ by Theorem \ref{canonical complexion is integrable}. We denote the curvature two-form of $\theta^0$ by $F^0$.

	\begin{lemma}
		Let $\theta$ be a connection on $P$ and $\Phi\in \Omega_{\Ad}^1(P;\mfh)$ be such that \[\theta=\theta^0+\Phi.\]
		Let $F$ be the curvature form of $\theta$, and $p_0\in P_{x_0}$ be fixed. For all $\xi,\zeta\in \mfm$, we have
		\[F_{p_0}(\xi^*,\zeta^*)-F_{p_0}^0(\xi^*,\zeta^*)=\frac{1}{2}[\phi,\phi](\xi,\zeta)=[\phi(\xi),\phi(\zeta)],\]
		where $\phi(\xi)=\Phi_{p_0}(\xi_{p_0}^*)$ is the linear map from $\mfg$ to $\mfh$ associated with $\Phi$.
	\end{lemma}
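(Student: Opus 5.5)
Two routes are available: the structure equation for the perturbed connection, or Proposition \ref{wang classification theorem easier direction} applied to both connections. The first is cleaner, and the second serves as a check.

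\textbf{Step 1 (structure equation).} Write $\theta=\theta^0+\Phi$ and use Proposition \ref{curvature 2 form formula}:
\[F=\mrd\theta+\tfrac12[\theta,\theta]=F^0+\mrd\Phi+[\theta^0,\Phi]+\tfrac12[\Phi,\Phi].\]
Here the cross term is $[\theta^0,\Phi](X,Y)=[\theta^0(X),\Phi(Y)]-[\theta^0(Y),\Phi(X)]$. Since $\Phi$ is tensorial of type $\Ad$, the standard identity gives $\mrd\Phi+[\theta^0,\Phi]=D^{\theta^0}\Phi$. One checks this on horizontal vectors, where $\theta^0$ vanishes, and on a vertical argument $\eta^\#$, where $\mathcal{L}_{\eta^\#}\Phi=-[\eta,\Phi]$ cancels the bracket term. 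Hence $F-F^0=D^{\theta^0}\Phi+\tfrac12[\Phi,\Phi]$.

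\textbf{Step 2 (evaluate at $p_0$).} For $\xi,\zeta\in\mfm$, Lemma \ref{star: vertical and horizontal} says $\xi^*_{p_0},\zeta^*_{p_0}$ are $\theta^0$-horizontal. So $\tfrac12[\Phi,\Phi](\xi^*,\zeta^*)=[\phi(\xi),\phi(\zeta)]=\tfrac12[\phi,\phi](\xi,\zeta)$. It remains to show $(D^{\theta^0}\Phi)_{p_0}(\xi^*,\zeta^*)=\mrd\Phi(\xi^*,\zeta^*)=0$, and this is the main point.

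Since $\Phi$ is $G$-invariant, $\mathcal L_{\xi^*}\Phi=0$. As in the proof of Proposition \ref{wang classification theorem easier direction}, this gives $\xi^*(\Phi(\zeta^*))=\Phi([\xi^*,\zeta^*])=-\phi([\xi,\zeta])$. Therefore
\[\mrd\Phi(\xi^*,\zeta^*)=-\phi([\xi,\zeta])+\phi([\zeta,\xi])+\phi([\xi,\zeta])=-\phi([\xi,\zeta]).\]
The symmetric-space hypothesis $[\mfm,\mfm]\subseteq\mfk$, together with $\phi|_\mfk=0$ from Theorem \ref{one-to-one correspondence between tensorial forms and phi,psi}, then kills this term.

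\textbf{Cross-check.} As a sanity check, let $\Lambda^0$ and $\Lambda=\Lambda^0+\phi$ be the Wang maps of $\theta^0$ and $\theta$, and apply Proposition \ref{wang classification theorem easier direction} to both. Since $\Lambda^0|_\mfm=0$, $\phi|_\mfm$ is arbitrary, $\phi|_\mfk=0$ and $[\xi,\zeta]\in\mfk$, the Wang formula gives
\[F-F^0=[\phi(\xi),\phi(\zeta)]-\phi([\xi,\zeta])=[\phi(\xi),\phi(\zeta)].\]
This agrees with Step 2.

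The only subtle point is the vanishing of $\phi([\xi,\zeta])$. It is exactly where symmetry, rather than mere reductivity, is used, and I would flag that explicitly.
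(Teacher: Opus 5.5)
Your proof is correct and follows the paper's route: expand $F=\mrd\theta+\tfrac12[\theta,\theta]$, evaluate on the $\theta^0$-horizontal vectors $\xi^*_{p_0},\zeta^*_{p_0}$, and show $\mrd\Phi(\xi^*,\zeta^*)=0$ using $G$-invariance of $\Phi$ together with $[\mfm,\mfm]\subseteq\mfk$ and $\phi|_\mfk=0$. The paper shows each term $\xi^*(\Phi(\zeta^*))$ vanishes individually, which is the same computation, and your Wang-formula cross-check (applied to the invariant connection $\theta$ with Wang map $\Lambda^0+\phi$) is itself a valid, shorter independent proof.
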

	
	\begin{proof}
		By Proposition \ref{curvature 2 form formula}, the two curvature forms are related by
		\[F=\mrd\theta+\frac{1}{2}[\theta,\theta]=F^0+[\theta^0,\Phi]+\mrd\Phi+\frac{1}{2}[\Phi,\Phi].\]
		If $X,Y$ are $\theta^0$-horizontal vectors, then
		\[F(X,Y)=F^0(X,Y)+\mrd\Phi(X,Y)+\frac{1}{2}[\Phi,\Phi](X,Y).\]
		We claim that for $\xi,\zeta\in \mfm$,
		\[\mrd\Phi_{p_0}(\xi^*,\zeta^*)=0.\]
		Since $\Phi$ is invariant, we have
		\[\xi_{p_0}^*(\Phi(\zeta^*))=(\mathcal{L}_{\xi^*}\Phi)(p_0)+\Phi_{p_0}[\xi^*,\zeta^*]=-\Phi_{p_0}([\xi,\zeta]_{p_0}^*)=0.\]
		The last equality follows because $[\xi,\zeta]\in \mfk$. Therefore, 
		\begin{align*}
			\mrd\Phi_{p_0}(\xi^*,\zeta^*)
			&=\xi_{p_0}^*(\Phi(\zeta^*))-\zeta_{p_0}^*(\Phi(\xi^*))-\Phi_{p_0}[\xi^*,\zeta^*]\\
			&=0.
		\end{align*}
		Hence, we obtain
		\[F(\xi_{p_0}^*,\zeta_{p_0}^*)-F^0(\xi_{p_0}^*,\zeta_{p_0}^*)=\frac{1}{2}[\Phi,\Phi](\xi_{p_0}^*,\zeta_{p_0}^*)=\frac{1}{2}[\phi,\phi](\xi,\zeta).\]
	\end{proof}
	
	\begin{theorem}
		Let $J$ be an invariant complexion on $P$ such that
		\[J=J^0+(\Psi(\cdot))^\#.\]
		Let $\psi:\mfg\to\mfh$ be the linear map corresponding to $\Psi$. In other words,
		\[
		\psi(\xi)=\Psi_{p_0}(\xi_{p_0}^*),\quad \xi\in \mfg.
		\] 
		Then $J$ is integrable if and only if 
		\[[\psi,\psi]=0.\]
		\label{integrability condition in terms of psi}
	\end{theorem}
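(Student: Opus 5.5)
We reduce the integrability of $J$ to the vanishing of the $(0,2)$-part of a curvature form, and then compute that curvature at the single point $p_0$ using invariance.

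\textbf{Step 1: realise $J$ by a connection.} By Theorem \ref{relation between tensorial forms of connection and complexion}(3), $J$ is induced by
\[\theta=\theta^0-\tfrac{i}{2}\Psi.\]
Since $\theta^0$ and $\Psi$ are invariant, $\theta$ is an invariant connection. Its associated tensorial form is $\Phi=-\tfrac{i}{2}\Psi$, with linear map $\phi=-\tfrac{i}{2}\psi$.

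\textbf{Step 2: reduce to a statement at $p_0$.} The base $M$ carries an integrable $j$, so Corollary \ref{integrability condition and F_A} says $J$ is integrable if and only if $F^{0,2}\equiv 0$, where $F$ is the curvature of $\theta$ and the $(0,2)$-part is taken with respect to $J$. Both $F$ and $J$ are $G$-invariant, and $F^{0,2}$ is $H$-equivariant and horizontal. Since $G\times H$ acts transitively on $P$, it suffices to check $F^{0,2}_{p_0}=0$.

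\textbf{Step 3: choose test vectors.} Horizontality lets us test only on $\theta^0$-horizontal vectors. By Lemma \ref{star: vertical and horizontal}, at $p_0$ these are exactly $\xi^*_{p_0}$ with $\xi\in\mfm$.

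One subtlety is that $J$ and $J^0$ differ by the vertical term $\Psi^\#$. However, $F$ is horizontal, so $F(JX,\cdot)=F(J^0X,\cdot)$. We may therefore use $J^0$ in the formula for $F^{0,2}$, and $J^0\xi^*_{p_0}=(j_\mfm\xi)^*_{p_0}$.

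\textbf{Step 4: split the curvature.} By the preceding lemma, for $\xi,\zeta\in\mfm$,
\[F_{p_0}(\xi^*,\zeta^*)=F^0_{p_0}(\xi^*,\zeta^*)+[\phi(\xi),\phi(\zeta)].\]
Apply the $(0,2)$ projection
\[B\mapsto B(\cdot,\cdot)-B(J^0\cdot,J^0\cdot)+i\big(B(J^0\cdot,\cdot)+B(\cdot,J^0\cdot)\big)\]
to both terms. The $F^0$ term gives $0$ by the computation in Theorem \ref{canonical complexion is integrable}.

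For the second term, use the anti-linearity $\psi\circ j_\mfm=-i\psi$ from Theorem \ref{classification of invariant complexions: G times_lambda H}, which gives $\phi(j_\mfm\xi)=-i\phi(\xi)$. Then $[\phi(j\xi),\phi(j\zeta)]=-[\phi\xi,\phi\zeta]$ and $[\phi(j\xi),\phi\zeta]=-i[\phi\xi,\phi\zeta]$, using the complex bilinearity of the bracket on $\mfh$. The projection of the bracket term therefore equals
\[\big(1+1+i(-i)+i(-i)\big)[\phi\xi,\phi\zeta]=4[\phi(\xi),\phi(\zeta)].\]
Substituting $\phi=-\tfrac{i}{2}\psi$ gives
\[F^{0,2}_{p_0}(\xi^*,\zeta^*)=-[\psi(\xi),\psi(\zeta)]=-\tfrac12[\psi,\psi](\xi,\zeta).\]

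\textbf{Step 5: conclude.} By Step 4, $F^{0,2}_{p_0}=0$ on all pairs from $\mfm$ if and only if $[\psi,\psi]$ vanishes on $\mfm\times\mfm$. Since $\psi|_\mfk=0$, this is the same as $[\psi,\psi]=0$ on all of $\mfg$.

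\textbf{Main obstacle.} The delicate point is the bookkeeping in Step 4. We must apply the $(0,2)$ projection with respect to $J$ and not $J^0$, which Step 3 handles through horizontality of $F$. We must also keep the factor $-\tfrac i2$ and the signs $i$ versus $i_H$ consistent, since a sign error would make the combination cancel instead of giving a nonzero multiple of $[\psi(\xi),\psi(\zeta)]$.
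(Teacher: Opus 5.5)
Your proposal is correct and follows the same route as the paper: you realise $J$ through $\theta^0-\tfrac{i}{2}\Psi$, use the preceding lemma to write $F-F^0$ at $p_0$ on $\mfm$-vectors as $[\phi(\xi),\phi(\zeta)]$, kill the $(0,2)$-part of $F^0$ by integrability of $J^0$, and use the $(0,1)$-type of $\Psi$ to turn the bracket term into a nonzero multiple of $[\psi,\psi]$. Your version is more careful than the paper's about the reduction to $p_0$ and the $J$ versus $J^0$ issue, and your constant $-\tfrac12[\psi,\psi](\xi,\zeta)$ is the correct one; the paper's $-\tfrac18$ drops the factor $4$ coming from $\Psi(2\pi^{0,1}X)=2\Psi(X)$, which does not affect the equivalence.
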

	
	\begin{proof}
		From Theorem \ref{relation between tensorial forms of connection and complexion}, $J$ is induced by the connection $\theta=\theta^0-\frac{i}{2}\Psi$. By the previous lemma, we obtain
		\begin{align*}
			F(\xi^*,\zeta^*)
			&=F^0(\xi^*,\zeta^*)+\frac{1}{2}\left[-\frac{i}{2}\Psi,-\frac{i}{2}\Psi\right](\xi^*,\zeta^*)\\
			&=F^0(\xi^*,\zeta^*)-\frac{1}{8}\left[\Psi,\Psi\right](\xi^*,\zeta^*).
		\end{align*}
		Since $J^0$ is integrable, the $(0,2)$-part of $F^0$ vanishes. Thus,
		\begin{align*}
			F^{0,2}(\xi^*,\zeta^*)
			&=F(2\pi^{0,1}\xi^*,2\pi^{0,1}\zeta^*)\\
			&=F^0(2\pi^{0,1}\xi^*,2\pi^{0,1}\zeta^*)-\frac{1}{8}[\Psi,\Psi](2\pi^{0,1}\xi^*,2\pi^{0,1}\zeta^*)\\
			&=-\frac{1}{8}[\Psi^{0,1},\Psi^{0,1}](\xi^*,\zeta^*)\\
			&=-\frac{1}{8}[\Psi,\Psi](\xi^*,\zeta^*)\\
			&=-\frac{1}{8}[\psi,\psi](\xi,\zeta).
		\end{align*}
		Here we used the fact that $\Psi$ is $(0,1)$-form. Hence, $J$ is integrable if and only if 
		\[[\psi,\psi]=0.\]
	\end{proof}
	
	From this theorem, we obtain the following result.
	\begin{corollary}
		With the same assumptions as in Theorem \ref{integrability condition in terms of psi}, if $H$ is abelian, then every invariant complexion on $P$ is integrable.
		\label{abelian structure group integrable}
	\end{corollary}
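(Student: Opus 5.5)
This follows directly from Theorem \ref{integrability condition in terms of psi}. Let $J$ be an invariant complexion on $P$. By Theorem \ref{space of complexions as an affine space}, $J$ differs from the canonical complexion $J^0$ by a tensorial $(0,1)$-form: $J=J^0+(\Psi(\cdot))^\#$ with $\Psi\in\Omega^{0,1}_{\Ad}(P,J^0;\mfh)$. The lemma preceding Theorem \ref{classification of invariant complexions: G times_lambda H} shows that $\Psi$ is invariant, because both $J$ and $J^0$ are invariant ($J^0$ comes from the invariant canonical connection). Let $\psi:\mfg\to\mfh$, $\psi(\xi)=\Psi_{p_0}(\xi^*_{p_0})$, be the corresponding linear map. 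Then Theorem \ref{integrability condition in terms of psi} says $J$ is integrable if and only if $[\psi,\psi]=0$.

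It remains to see that $[\psi,\psi]$ vanishes when $H$ is abelian. With the convention of Proposition \ref{curvature 2 form formula},
\[[\psi,\psi](\xi,\zeta)=2[\psi(\xi),\psi(\zeta)]\]
for $\xi,\zeta\in\mfg$. If $H$ is abelian, its Lie algebra $\mfh$ is abelian, so this bracket is zero for every $\xi,\zeta$. Hence $[\psi,\psi]=0$ identically, and $J$ is integrable.

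I should make sure the bracket relevant to integrability is really the $\mfh$-bracket evaluated on $\mfm$ at $p_0$. In the proof of Theorem \ref{integrability condition in terms of psi}, the quantity $F^{0,2}$ is expressed through $[\Psi,\Psi]$ evaluated on the vectors $\xi^*_{p_0}$ with $\xi\in\mfm$. By $G$-invariance and $H$-equivariance this determines $F^{0,2}$ everywhere. So whenever $\mfh$ is abelian, $F^{0,2}\equiv 0$. Corollary \ref{integrability condition and F_A} then gives integrability, using that the base complex structure $j$ is integrable. There is no real obstacle here; the only point to check is that the symmetric-space hypotheses of Theorem \ref{integrability condition in terms of psi} are in force, and they are, since the corollary assumes them.
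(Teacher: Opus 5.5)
Your proof is correct and follows the paper's own argument: both invoke Theorem \ref{integrability condition in terms of psi} and note that $[\psi,\psi](\xi,\zeta)=2[\psi(\xi),\psi(\zeta)]$ vanishes identically when $\mfh$ is abelian. Your extra steps (invariance of $\Psi$, and reducing $F^{0,2}$ to its values at $p_0$ on $\mfm$) only spell out what that theorem already contains.
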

	
	\begin{proof}
		This follows from the fact that every linear map $\psi:\mfg_\mbc\to \mfh$ satisfies 
		\[[\psi,\psi]=0\]
		as $H$ is abelian.
	\end{proof}
	
	Finally, we have the following classification theorem for integrable complexions on homogeneous bundles over symmetric space.
	
	\begin{theorem}
		Let $M=G/K$ be a symmetric space equipped with the invariant complex structure $j$, and $P=G\times_\lambda H$ a homogeneous $H$-bundle over $M$. Then there is a one-to-one correspondence between the space of integrable invariant complexions on $P$ and the subset of $\mathrm{Hom}(\mfg_\mbc,\mfh)$ consisting of linear maps 
		\[\psi:\mfg_\mbc\to (\mfh,i_H)\]
		satisfying the following conditions:
		\begin{enumerate}
			\item $\psi|_{\mfk_\mbc}=0.$
			\item $\psi\circ \Ad_k=\Ad_{\lambda(k)}\circ \psi$ for all $k\in K$.
			\item $\psi$ is anti-linear with respect to $j_\mfm$
			\[\psi\circ j_\mfm=-i\psi.\]
			\item $[\psi,\psi]=0$.
		\end{enumerate}
		\label{classification of integrable invariant complexions}
	\end{theorem}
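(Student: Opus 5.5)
The plan is to assemble the statement from three earlier results rather than prove anything new. Since $M$ is symmetric, it is reductive. By Theorem \ref{canonical complexion is integrable}, the canonical connection $\theta^0$ then induces an invariant, integrable complexion $J^0$. In particular, an invariant complexion exists, so the hypothesis of Theorem \ref{classification of invariant complexions: G times_lambda H} is met. We fix $J^0$ as the base point of the affine space of complexions.

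First, we invoke Theorem \ref{classification of invariant complexions: G times_lambda H}. It gives a bijection $J\mapsto\psi$ between invariant complexions and linear maps $\psi:\mfg_\mbc\to(\mfh,i_H)$ satisfying conditions (1)--(3). Explicitly, $J=J^0+(\Psi(\cdot))^\#$ with $\Psi\in\Omega^{0,1}_{\Ad}(P,J^0;\mfh)$ invariant, and $\psi(\xi)=\Psi_{p_0}(\xi^*_{p_0})$.

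One bookkeeping point needs checking. Condition (3) in that theorem is phrased with $j_\mfg$ on $\mfg/\mfk$, while here it is phrased with $j_\mfm$. Under the identification $\mfm\cong\mfg/\mfk$ coming from the splitting $\mfg=\mfk\oplus\mfm$, both maps correspond to $j_{x_0}$ via $r_{x_0*}$. Since $\psi|_{\mfk_\mbc}=0$, $\psi$ factors through $\mfg_\mbc/\mfk_\mbc\cong\mfm_\mbc$. Hence antilinearity with respect to $j_\mfg$ and with respect to $j_\mfm$ are the same condition. By the commutative diagram defining $j_\mfm$, this also matches $J^0_{p_0}$ on $\xi^*_{p_0}$ for $\xi\in\mfm$.

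Second, we restrict the bijection to integrable complexions. Theorem \ref{integrability condition in terms of psi} states that an invariant $J=J^0+(\Psi(\cdot))^\#$ is integrable if and only if $[\psi,\psi]=0$. Adding condition (4) therefore cuts out exactly the image of the integrable invariant complexions.

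The main subtlety is whether $[\psi,\psi]=0$ is a condition on all of $\mfg_\mbc$ or only on $\mfm$. The proof of Theorem \ref{integrability condition in terms of psi} only evaluates $F^{0,2}$ on pairs $\xi^*_{p_0},\zeta^*_{p_0}$ with $\xi,\zeta\in\mfm$. To conclude $F^{0,2}\equiv0$ everywhere, I would argue in two steps. At $p_0$, for $\kappa\in\mfk$, the vector $\kappa^*_{p_0}$ is vertical by Lemma \ref{star: vertical and horizontal}, so curvature terms involving it vanish by horizontality. Meanwhile $[\psi,\psi](\kappa,\cdot)=2[\psi(\kappa),\psi(\cdot)]=0$ because $\psi|_{\mfk}=0$. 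So the two sides agree on all of $\mfg$. Away from $p_0$, $F^{0,2}$ is $G$-invariant and $\Ad$-equivariant, and every point is of the form $gp_0h$, so vanishing at $p_0$ propagates to all of $P$.

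With this, the correspondence of Theorem \ref{classification of invariant complexions: G times_lambda H} restricts to the claimed bijection.
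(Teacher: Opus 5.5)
Your route is exactly the paper's: its proof combines the classification of invariant complexions with Theorem~\ref{integrability condition in terms of psi}. Your additions are correct: matching $j_\mfg$ with $j_\mfm$, and propagating the vanishing of $F^{0,2}$ from $\mfm$-pairs at $p_0$ to all of $P$ by horizontality, $G$-invariance and $\Ad$-equivariance. The gap is your first step, which you and the paper both take for granted: that the canonical complexion $J^0$ is integrable. Everything downstream depends on it, because Theorem~\ref{integrability condition in terms of psi} only computes $F_J^{0,2}-(F^0)^{0,2}$ on $\mfm$-pairs. Theorem~\ref{canonical complexion is integrable} rests on Lemma~\ref{lemma for integrability of canonical complexion}, whose proof replaces $[\xi^*,J\zeta^*]_{p_0}$ by $[\xi^*,(j_\mfm\zeta)^*]_{p_0}$. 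This is illegitimate, since $J\zeta^*$ and $(j_\mfm\zeta)^*$ agree only at the point $p_0$, not along the flow of $\xi^*$. The lemma is in fact false already for the frame bundle of $\mbcp^1$. With $\xi=\zeta=\iota(1)$ it would give $\lambda_*([\xi,j_\mfm\xi])=0$. But $[\xi,j_\mfm\xi]=\mathrm{diag}(-2i,2i)$, and the paper's formula for $\lambda_*$ gives $4i$.

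On a symmetric space, Proposition~\ref{wang classification theorem easier direction} gives $F^0(\xi^*_{p_0},\zeta^*_{p_0})=-\lambda_*([\xi,\zeta])$ for $\xi,\zeta\in\mfm$. Hence $J^0$ is integrable if and only if $\lambda_*$, extended complex-linearly to $\mfk_\mbc\to(\mfh,i_H)$, annihilates $[\mfm^{0,1},\mfm^{0,1}]$. This is automatic for Hermitian symmetric spaces: there $j_\mfm=\on{ad}_Z$ with $Z$ central in $\mfk$, which forces $[\mfm^{0,1},\mfm^{0,1}]=0$, so $\mbcp^n$ is unaffected. It fails in general. Take $G=SL(2,\mbc)$, $\sigma=\Ad_{\mathrm{diag}(i,-i)}$, $K$ the diagonal torus, and $M=G/K$ with its $G$-invariant complex structure, so $j_\mfm$ is multiplication by $i$ on $\mfm=\mbc e\oplus\mbc f$. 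Let $H=\mbc^*$ and $\lambda(\mathrm{diag}(z,z^{-1}))=\bar z$. With $h=[e,f]$ one gets $\lambda_*(h)=1$ and $\lambda_*(ih)=-i$. Evaluating the paper's formula for $F^{0,2}$ then gives $(F^0)^{0,2}(e^*,f^*)=-1-1-1-1=-4$ at $p_0$.

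Since $H$ is abelian, any invariant $J=J^0+(\Psi(\cdot))^\#$ is induced by a connection with curvature $F^0+\mrd\Phi$. Here $\mrd\Phi$ vanishes on $\mfm$-pairs at $p_0$ by the paper's own lemma, and $J$ differs from $J^0$ only by vertical terms. Hence $F_J^{0,2}(e^*,f^*)=-4$ as well. So no invariant complexion on this bundle is integrable, contradicting also Corollary~\ref{abelian structure group integrable}, yet $\psi=0$ satisfies (1)--(4).

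The statement therefore needs an extra hypothesis, such as $\lambda_*([\mfm^{0,1},\mfm^{0,1}])=0$ (e.g.\ $M$ Hermitian symmetric). Alternatively, condition (4) must become an inhomogeneous equation $[\psi,\psi]=c\,\lambda_*\circ[\cdot,\cdot]$ on $\mfm^{0,1}$, for a suitable nonzero constant $c$. Under such a hypothesis your argument goes through as written.
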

	
	\begin{proof}
		This follows from Theorem \ref{classification of invariant complexions: G times_lambda H} and the previous theorem.
	\end{proof}
	
	\newpage
	\section{Hermitian and K\"ahler Metrics on Principal Fibre Bundles}
	
	In this section, we study Hermitian and K\"ahler metrics induced by a connection. We will show that every connection induces a Hermitian and a K\"ahler metric on a  principal fibre bundle under appropriate assumptions on the base $M$ and the Lie group $H$.
	
	In Section 5.1, we introduce the notions of Hermitian and K\"ahler manifolds in a general setting. Section 5.2 introduces connection metrics, which are Riemannian metrics induced on the principal bundle by choices of connections. We study the conditions when the principal bundle equipped with the induced complexion and connection metric is Hermitian or K\"ahler in Sections 5.2 and 5.3. Finally, in Section 5.4, the focus is restricted to K\"ahler metrics on homogeneous bundles.
	
	\subsection{Hermitian and K\"ahler Manifolds}
	In this section, we introduce basic notions about Hermitian and K\"ahler manifolds. In the following sections, we assume that $M$ is a complex manifold (with the canonical almost complex structure which is integrable). Briefly speaking, a Hermitian metric captures the geometric information of an almost complex structure, and a K\"ahler metric is a special class of Hermitian metrics such that its fundamental two-form is closed. 
	
	\begin{definition}[Hermitian Manifolds]
		Let $M$ be a complex manifold and $J$ its canonical complex structure. A Riemannian metric $g$ on $M$ (viewed as a real manifold) is called \emph{Hermitian} if 
		\[g(J\cdot,J\cdot)=g(\cdot\,,\,\cdot).\]
		In other words, $J$ is orthogonal with respect to the metric $g$. In this case, the triple $(M,J,g)$ is said to be a Hermitian manifold.
	\end{definition}
	
	Motivated by the geometric interpretation of the imaginary number as a rotation, we require that $J$, in addition to satisfying the equation $J^2=-\on{id}$, exhibits the behaviour of a rotation.
	
	\begin{example}
		The complex manifold $\mbc^n$, equipped with its canonical complex structure and the standard Euclidean metric, defines a Hermitian manifold.
	\end{example}

	\begin{definition}[Fundamental 2-forms]
		Let $(M,J,g)$ be a Hermitian manifold. The fundamental 2-form of $g$ is defined to be the real differential 2-form $\omega\in\Omega^2(M)$ given by
		\[\omega(\cdot\,,\,\cdot):=g(J\cdot,\cdot).\]
	\end{definition}
	It is alternating because $\omega(X,Y)=g(JX,Y)=g(J^2 X,JY)=-g(X,JY)=-g(JY,X)=-\omega(Y,X)$ for all $X,Y\in \Gamma(TM)$. Moreover, since $J$ is orthogonal with respect to $g$, it is orthogonal with respect to $\omega$.
	
	\begin{definition}[K\"ahler Manifold]
		A Hermitian manifold $(M,J,g)$ is said to be a \emph{K\"ahler} manifold if its fundamental 2-form $\omega$ is closed ($\mrd \omega=0$). In this case, the fundamental 2-form $\omega$ is called the K\"ahler form of $g$.
	\end{definition}
	
	Equivalently, a Hermitian manifold $(M,J,g)$ is K\"ahler if and only if $J$ is parallel with respect to the Levi-Civita connection $\nabla$ of $g$. For a proof, see \cite{andreimoroianu}.

	\subsection{Hermitian Metrics Induced by Connections}
	
	Let $(M,g)$ be a Riemannian manifold and $H$ a Lie group endowed with a Riemannian metric $(\,,\,)$ (not necessarily complex).

	\begin{definition}[Connection Metrics]
		Let $A$ be a connection on a principal $H$- bundle $P$ and $\theta$ be its connection one-form. Define the Riemannian metric $g_A:TP\times TP\to \mbr$ on $P$ by
		\begin{equation}
			(g_A)_p(X,Y)=g(\pi_*X,\pi_*Y)_x+(\theta_p(X),\theta_p(Y)),
			\label{induced metrics}
		\end{equation}
		where $x=\pi(p)$. The Riemannian metric $g$ is called the metric induced by the connection $\theta$.
	\end{definition}
	
	Let $(M,j,g)$ be a Hermitian manifold and $P$ be a principal $H$-bundle over $M$.
	\begin{proposition}
		Suppose that $A$ is a connection on $P$ and $g_A$ is its induced metric given by \eqref{induced metrics}. Then
		\begin{enumerate}
			\item The vertical subbundle and the horizontal subbundle are orthogonal with respect to $g_A$.
			\item The map $\pi:(P,g_A)\to(M,g)$ is a Riemannian submersion, namely, for every $p\in P$, the map $\pi_{*,p}|_{A_p}:A_p\to T_pM$ is an isometry.
		\end{enumerate}
	\end{proposition}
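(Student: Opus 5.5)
Both claims follow by evaluating the defining formula \eqref{induced metrics} on suitable vectors. The only ingredients are the defining properties of a connection one-form $\theta$: it vanishes on the horizontal subspace $A_p$ and satisfies $\theta_p(\eta_p^\#)=\eta$. We also use that $\pi_*$ kills vertical vectors, since $VP=\ker\pi_*$.

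For (1), fix $p\in P$, a vertical vector $V=\eta_p^\#$ with $\eta\in\mfh$, and a horizontal vector $X\in A_p$. In \eqref{induced metrics} the first term is $g(\pi_*V,\pi_*X)$, and it vanishes because $\pi_*V=0$. The second term is $(\theta_p(V),\theta_p(X))=(\eta,0)=0$, because $\theta_p$ vanishes on $A_p$. Hence $g_A(V,X)=0$, so $VP\perp A$.

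For (2), take $X,Y\in A_p$. Then $\theta_p(X)=\theta_p(Y)=0$, so the second term drops out and
\[(g_A)_p(X,Y)=g_{\pi(p)}(\pi_*X,\pi_*Y).\]
Thus $\pi_{*,p}|_{A_p}$ preserves the metric. To be an isometry it must also be a linear isomorphism onto $T_{\pi(p)}M$. It is injective because $A_p\cap V_pP=0$ and $V_pP=\ker\pi_{*,p}$. It is surjective because $\pi$ is a submersion and $T_pP=V_pP\oplus A_p$, so $\pi_{*,p}(T_pP)=\pi_{*,p}(A_p)$. This is also the fact already used earlier in the paper to define horizontal lifts.

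We also check that $g_A$ is indeed a Riemannian metric. It is symmetric and bilinear. Positive definiteness follows from the orthogonal splitting in (1): on $V_pP$ the metric is $(\eta,\eta')$, since $\pi_*$ vanishes there, and this is positive definite because $\theta_p:V_pP\to\mfh$ is an isomorphism. On $A_p$ the metric is the pullback of $g$ by an isomorphism, by (2).

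There is no real obstacle here; each step is immediate from the definitions. The only point requiring a sentence of justification is the bijectivity of $\pi_{*,p}|_{A_p}$, which comes from the direct sum decomposition together with $\ker\pi_{*,p}=V_pP$.
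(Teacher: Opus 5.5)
Your proof is correct and follows the paper's argument: both parts come from evaluating the defining formula for $g_A$ on vertical and horizontal vectors, using $\pi_*|_{VP}=0$ and $\theta|_{A}=0$. Your extra checks, that $\pi_{*,p}|_{A_p}$ is a linear isomorphism onto $T_{\pi(p)}M$ and that $g_A$ is positive definite, are details the paper leaves implicit, and you handle them correctly.
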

	
	\begin{proof}~
			(1) Let $X\in \Gamma(VP)$ and $Y\in \Gamma(A)$. Then
			\(g_A(X,Y)=g({\pi_*X},\pi_*Y)+(\theta(X),{\theta(Y)})=0.\)\\
			(2) Let $X,Y\in \Gamma(A)$. It follows that
			\(g_A(X,Y)=g(\pi_*X,\pi_*Y)+({\theta(X)},{\theta(Y)})=g(\pi_*X,\pi_*Y).\)
	\end{proof}
	
	Let $A$ be a connection on $P$. Recall that it induces an almost complex structure $J_A$ on $P$. Therefore, it is natural to determine if $(P,J_A,g_A)$ is a Hermitian manifold. Indeed, this is the case as shown below under appropriate assumptions on $M$ and $H$.
	
	\begin{proposition}
		Let \((M, j, g)\) be a Hermitian manifold, and \(H\) a complex Lie group equipped with a Hermitian structure \((H, i_H, (\cdot,\cdot))\). Let \(P\) be a principal \(H\)-bundle over \(M\), and \(A\) a connection on \(P\). Denote by \(J_A\) the complexion induced by \(A\), and by \(g_A\) the metric induced on \(P\). Then \((P, J_A, g_A)\) is a Hermitian manifold. In other words,
		\[
		g_A(J_A(\cdot), J_A(\cdot)) = g_A(\cdot, \cdot).
		\]
		
	\end{proposition}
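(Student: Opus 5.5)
The plan is to verify the identity $g_A(J_AX,J_AY)=g_A(X,Y)$ pointwise at each $p\in P$ for arbitrary $X,Y\in T_pP$. We compute each of the two terms in the definition \eqref{induced metrics} separately. Two facts do all the work: $\pi$ is almost holomorphic for $J_A$, and $\theta\circ J_A=i_H\circ\theta$. Once these are in hand, $J_A$-invariance of $g_A$ reduces to the Hermitian property of $g$ on $M$ and of $(\cdot,\cdot)$ on $\mfh$.

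\textbf{Step 1: the horizontal term.} Since $J_A$ is a complexion, the definition of complexions (or directly the definition of the induced complexion) gives $\pi_*\circ J_A=j\circ\pi_*$. Therefore
\[g(\pi_*J_AX,\pi_*J_AY)=g(j\pi_*X,j\pi_*Y)=g(\pi_*X,\pi_*Y),\]
where the last equality uses that $g$ is Hermitian with respect to $j$.

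\textbf{Step 2: the vertical term.} We show $\theta(J_AX)=i_H\theta(X)$. Decompose $X=X^h+\eta_p^\#$ with $X^h\in A_p$ and $\eta=\theta(X)$. By the definition of the induced complexion, $J_AX^h\in A_p$, so $\theta(J_AX^h)=0$. Also $J_A\eta^\#=(i_H\eta)^\#$, so $\theta(J_A\eta^\#)=i_H\eta$. Adding these, $\theta(J_AX)=i_H\theta(X)$, and hence
\[(\theta(J_AX),\theta(J_AY))=(i_H\theta(X),i_H\theta(Y))=(\theta(X),\theta(Y)).\]
The last equality holds provided the Hermitian structure on $H$ means that $(\cdot,\cdot)$ restricted to $\mfh=T_eH$ is $i_H$-invariant. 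Adding Steps 1 and 2 gives the claim. Combined with the fact that $J_A$ is an almost complex structure on $P$, this shows $(P,J_A,g_A)$ is Hermitian in the (almost) sense intended.

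\textbf{Main obstacle.} The only delicate point is interpretive. In \eqref{induced metrics}, $(\cdot,\cdot)$ is evaluated on $\theta_p(X)\in\mfh$, so it must be read as an inner product on $\mfh$. We need this inner product to be $i_H$-orthogonal, which is exactly what the hypothesis $(H,i_H,(\cdot,\cdot))$ Hermitian provides at the identity. I would state this reading explicitly at the start of the proof.

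A second, minor remark concerns the definition of Hermitian manifolds given above, which is stated for complex manifolds. If $J_A$ is not integrable, the conclusion should be read as ``$g_A$ is $J_A$-Hermitian''. This is precisely the displayed identity in the statement, so no integrability input such as Corollary \ref{integrability condition and F_A} is needed.
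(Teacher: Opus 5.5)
Your proof is correct and uses the same ingredients as the paper's: $J_A$ preserves $TP=VP\oplus A$, acts as $j$ on horizontal vectors and as $i_H$ on vertical ones, and both $g$ and the inner product on $\mathfrak{h}$ are compatible with these complex structures. The difference is only in organization: the paper checks three cases (both arguments vertical, mixed, both horizontal), whereas you treat the two summands of $g_A$ separately via $\pi_*\circ J_A=j\circ\pi_*$ and $\theta\circ J_A=i_H\circ\theta$, which covers arbitrary $X,Y$ at once.
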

	
	\begin{proof}
		The proof is split into three cases.
		\begin{itemize}
			\item{Case 1: Let $X=\eta_1^\#, Y=\eta_2^\#$.} By direct computations, we see that
			\begin{align*}
				g_A(J_A(\eta_1^\#),,J_A(\eta_2^\#))&=g_A((i_H\eta)^\#,(i_H\eta_2)^\#)\\
				&=(i_H\eta_1,i_H\eta_2)\\
				&=(\eta_1,\eta_2)\\
				&=g_A(\eta_1^\#,\eta_2^\#).
			\end{align*} 
			\item{Case 2: Let $X$ be vertical and $Y$ horizontal.}
			Recall that $J_A$ maps $VP$ to $VP$ and $A$ to $A$. Therefore, 
			\begin{align*}
				g_A(J_AX,J_AY)&=g({j(\pi_*X)},j(\pi_*Y))+(\theta(J_AX),{\theta(J_AY)})\\
				&=0\\
				&=g_A(X,Y).
			\end{align*}
			\item{Case 3: Suppose $X$ and $Y$ are horizontal.} It follows that
			\begin{align*}
				g_A(J_AX,J_AY)&=g(j(\pi_*X),j(\pi_*Y))+{(\theta(J_AX),{\theta(J_AY)})}\\
				&=g(\pi_*X,\pi_*Y) && ((M,j,g))\text{ is Hermitian})\\
				&=g_A(X,Y).
			\end{align*}
		\end{itemize}
	\end{proof}

	\subsection{K\"ahler Metrics Induced by Connections}
	
	In this section, we aim to prove that if $(M,j,g)$ is K\"ahler and $H$ is a complex Lie group admitting a Hermitian metric, then $(P,J_A,g_A)$ is K\"ahler if and only if the connection $A$ is flat.
	
	Assume $P$ is a principal $H$-bundle over a Hermitian manifold $(M,j,g)$ admitting a connection $A$. Let $g_A$ be the metric induced by $A$.
	Recall that $(P,J_A,g_A)$ is a Hermitian manifold. Let $\nabla^A$ denote the Levi-Civita connection of $g_A$ and $\nabla$ denote the Levi-Civita connection of $g$. We first state a lemma, then establish the relations between the Levi-Civita connections.
	
	\begin{lemma}
		Let $A$ be a connection with connection one-form $\theta$ on a principal $H$-bundle $P$ over $M$. Then  for all $X,Y\in \Gamma(TM)$, we have
		\[[\tilde{X},\tilde{Y}]=\widetilde{[X,Y]}-(F_A(\tilde{X},\tilde{Y}))^\#.\]
	\end{lemma}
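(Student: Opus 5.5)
The plan is to evaluate the connection one-form $\theta$ on the bracket $[\tilde{X},\tilde{Y}]$ and separately check its projection to $M$. Since $TP=VP\oplus A$, a vector field $Z$ on $P$ is determined by its horizontal part and its vertical part $(\theta(Z))^\#$. So it suffices to show two things:
\[
h[\tilde{X},\tilde{Y}]=\widetilde{[X,Y]},
\qquad
\theta([\tilde{X},\tilde{Y}])=-F_A(\tilde{X},\tilde{Y}).
\]

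For the horizontal part, I would use that $\tilde{X}$ is $\pi$-related to $X$ and $\tilde{Y}$ is $\pi$-related to $Y$. Naturality of the Lie bracket under related vector fields then gives that $[\tilde{X},\tilde{Y}]$ is $\pi$-related to $[X,Y]$, that is, $\pi_*[\tilde{X},\tilde{Y}]=[X,Y]\circ\pi$. The horizontal component $h[\tilde{X},\tilde{Y}]$ is horizontal and projects to $[X,Y]$. By uniqueness of horizontal lifts, it equals $\widetilde{[X,Y]}$.

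For the vertical part, I would use the invariant formula for the exterior derivative of a vector-valued one-form:
\[
\mrd\theta(\tilde{X},\tilde{Y})=\tilde{X}(\theta(\tilde{Y}))-\tilde{Y}(\theta(\tilde{X}))-\theta([\tilde{X},\tilde{Y}]).
\]
Both $\theta(\tilde{X})$ and $\theta(\tilde{Y})$ vanish identically, so the first two terms are zero. Since $\tilde{X},\tilde{Y}$ are horizontal, the definition $F_A=(\mrd\theta)^h$ gives $F_A(\tilde{X},\tilde{Y})=\mrd\theta(\tilde{X},\tilde{Y})$. One can also reach this through Proposition \ref{curvature 2 form formula}, because $[\theta,\theta](\tilde{X},\tilde{Y})=2[\theta(\tilde X),\theta(\tilde Y)]=0$. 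Combining these, $\theta([\tilde{X},\tilde{Y}])=-F_A(\tilde{X},\tilde{Y})$. Together with the horizontal part, this yields the stated identity.

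The only point requiring care is the normalisation convention for $\mrd$ on one-forms. The formula above, without a factor $\tfrac12$, is the one consistent with $F_A=\mrd\theta+\tfrac12[\theta,\theta]$ and $[\theta,\theta](X,Y)=2[\theta(X),\theta(Y)]$, so I would adopt it explicitly. It is also the convention used in the proof of Proposition \ref{wang classification theorem easier direction}. Apart from this, the argument is routine.
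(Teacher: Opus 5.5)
Your proposal is correct and follows essentially the same route as the paper. The horizontal part comes from $\pi$-relatedness and uniqueness of horizontal lifts. The vertical part comes from evaluating the invariant formula for $\mrd\theta$ on horizontal lifts, where $\theta(\tilde{X})=\theta(\tilde{Y})=0$ gives $\theta([\tilde{X},\tilde{Y}])=-F_A(\tilde{X},\tilde{Y})$; your remark that the paper uses the convention without the factor $\tfrac12$ is also accurate.
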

	
	\begin{proof}
		Let $X,Y\in \Gamma(TM)$. The horizontal component of $[\tilde{X},\tilde{Y}]$ is $\widetilde{[X,Y]}$ since $\pi_*(\widetilde{[X,Y]})=[X,Y]=\pi_*[\tilde{X},\tilde{Y}]$. To find its vertical component, we use Equation \eqref{horizontal component with respect to a connection}. Recall that 
		\[F_A(\tilde{X},\tilde{Y})=\mrd\theta(\tilde{X},\tilde{Y})=\tilde{X}(\theta(\tilde{Y}))-\tilde{Y}\theta(\tilde{X})-\theta([\tilde{X},\tilde{Y}])=-\theta([\tilde{X},\tilde{Y}]).\] Its vertical component is therefore given by
		\begin{align*}
			(\theta[\tilde{X},\tilde{Y}])^\#
			&=-(F_A(\tilde{X},\tilde{Y}))^\#.
		\end{align*}
	\end{proof}

	The following lemma describes the relationship between the Levi-Civita connections of $M$ and $P$, which plays an important role in the proof of the main theorem.
	
	\begin{lemma}[\cite{Arash}]
		Let $H$ be a Lie group equipped with a bi-invariant metric $(\cdot\,,\,\cdot)$
		Let $U$ be an open subset of $M$ and $e=(e_1,\dots,e_n)$ a $g$-orthonormal local frame for $TM|_U$. For all $X,Y\in \Gamma(TU)$ and $\eta_1,\eta_2\in \mfh$, the following formulae hold.
		\begin{enumerate}
			\item $\nabla_{\tilde{X}}^A \tilde{Y}=\widetilde{\nabla_X Y}-\displaystyle\frac{1}{2}(F_A(\tilde{X},\tilde{Y}))^\#$.
			\item $\nabla_{\eta^\#}^A \tilde{X}=\nabla_{\tilde{X}}^A\eta^\#=\displaystyle\frac{1}{2}\sum_{k=1}^n (F_A(\tilde{X},\tilde{e_k}),\eta)\,\tilde{e_k}$.
			\item $\nabla_{\eta_1^\#}^A\eta_2^\#=\displaystyle\frac{1}{2}[\eta_1,\eta_2]^\#$.
		\end{enumerate}
	\end{lemma}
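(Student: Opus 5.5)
The plan is the standard O'Neill-type computation. Determine each covariant derivative through the Koszul formula
\[2g_A(\nabla^A_U V,W)=U g_A(V,W)+V g_A(U,W)-W g_A(U,V)+g_A([U,V],W)-g_A([U,W],V)-g_A([V,W],U),\]
where $U,V,W$ range over the horizontal lifts $\tilde X,\tilde Y,\tilde Z$ and the fundamental fields $\eta^\#$. Since these span $T_pP$ pointwise, testing against both types of $W$ determines $\nabla^A$. The bracket inputs are the following.
\begin{itemize}
\item $[\tilde X,\eta^\#]=0$, by the earlier lemma.
\item $[\tilde X,\tilde Y]=\widetilde{[X,Y]}-(F_A(\tilde X,\tilde Y))^\#$, by the lemma just proved.
\item $[\eta_1^\#,\eta_2^\#]=[\eta_1,\eta_2]^\#$, since $H$ acts on the right.
\end{itemize}
The metric inputs are also needed. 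By construction of $g_A$, we have $g_A(\tilde X,\tilde Y)=g(X,Y)\circ\pi$, $g_A(\tilde X,\eta^\#)=0$ and $g_A(\eta_1^\#,\eta_2^\#)=(\eta_1,\eta_2)$, a constant.

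For (1), take $W=\tilde Z$. The derivative terms reduce to $\pi^*$ of the base ones, and the bracket terms reduce to base brackets because the vertical parts are orthogonal to $\tilde Z$. Hence the Koszul expression for $\tilde Z$ equals that of $\nabla_XY$ on $M$, and so the horizontal part of $\nabla^A_{\tilde X}\tilde Y$ is $\widetilde{\nabla_XY}$. Next take $W=\eta^\#$. The derivative terms vanish: $g_A(\tilde Y,\eta^\#)=0$, and $\eta^\# g(X,Y)\circ\pi=0$. The bracket terms give $g_A([\tilde X,\tilde Y],\eta^\#)=-(F_A(\tilde X,\tilde Y),\eta)$, and the others vanish since $[\tilde X,\eta^\#]=0$. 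Hence the vertical part is $-\tfrac12 F_A(\tilde X,\tilde Y)^\#$.

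For (3), all metric functions are constant. Using only $[\eta_1^\#,\eta_2^\#]=[\eta_1,\eta_2]^\#$, the $\eta_3^\#$-pairing is
\[\tfrac12\bigl(([\eta_1,\eta_2],\eta_3)-([\eta_1,\eta_3],\eta_2)-([\eta_2,\eta_3],\eta_1)\bigr).\]
Bi-invariance means $\on{ad}$ is skew, and this collapses the expression to $\tfrac12([\eta_1,\eta_2],\eta_3)$. Against $\tilde Z$ all terms vanish, because brackets of fundamental fields are vertical and $[\eta^\#,\tilde Z]=0$.

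For (2), torsion-freeness together with $[\tilde X,\eta^\#]=0$ gives $\nabla^A_{\eta^\#}\tilde X=\nabla^A_{\tilde X}\eta^\#$. To compute the latter, pair with $\eta'^\#$. We get $g_A(\nabla^A_{\tilde X}\eta^\#,\eta'^\#)=-g_A(\eta^\#,\nabla^A_{\tilde X}\eta'^\#)$, because $(\eta,\eta')$ is constant. Then use $\nabla^A_{\tilde X}\eta'^\#=\nabla^A_{\eta'^\#}\tilde X$ and compatibility once more to get $g_A(\tilde X,\nabla^A_{\eta'^\#}\eta^\#)$, which is $0$ by (3) since that vector is vertical. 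So the vertical part vanishes. For the horizontal part, pair with $\tilde e_k$ and use metric compatibility together with (1):
\[g_A(\nabla^A_{\tilde X}\eta^\#,\tilde e_k)=-g_A(\eta^\#,\nabla^A_{\tilde X}\tilde e_k)=\tfrac12(\eta,F_A(\tilde X,\tilde e_k)).\]
Finally, expand in the orthonormal horizontal frame $\tilde e_k$, which is $g_A$-orthonormal by the Riemannian submersion property.

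The main obstacle is sign bookkeeping: the curvature terms in the bracket, the right-action convention for fundamental fields, and the skew-symmetry coming from bi-invariance all have to line up. Care is also needed to use bi-invariance of $(\cdot,\cdot)$ consistently. With only left-invariance, (3) fails.
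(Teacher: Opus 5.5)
Your proposal is correct and follows the paper's route. Like the paper, you apply the Koszul formula, tested against horizontal lifts and fundamental fields, with the bracket relations $[\tilde X,\eta^\#]=0$, $[\tilde X,\tilde Y]=\widetilde{[X,Y]}-(F_A(\tilde X,\tilde Y))^\#$ and $[\eta_1^\#,\eta_2^\#]=[\eta_1,\eta_2]^\#$, and with $\on{ad}$-skewness from bi-invariance for (3). The only deviation is that you obtain (2) from (1), (3), torsion-freeness and metric compatibility, where the paper's ``similar'' suggests another direct Koszul computation; your route works and gives the same result.
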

	
	\begin{proof}
		The Levi-Civita connection $\nabla^A$ of $g_A$ is characterised by the equation
		\begin{align}
			2g_A(\nabla_U^A V,W)&=U(g_A(V,W))+V(g_A(W,U))-W(g_A(U,V))\nonumber\\
			&\quad\quad\quad-g_A(U,[V,W])+g_A(V,[W,U])+g_A(W,[U,V]),
			\label{levi civita connection formula}
		\end{align}
		for $U,V,W\in \Gamma(TP)$. 
		
		\begin{enumerate}
			\item Let \( U = \tilde{X} \), \( V = \tilde{Y} \), and \( W = \tilde{Z} \) be the horizontal lifts of the vector fields \( X, Y, Z \in \Gamma(TM) \). Using Equation \eqref{levi civita connection formula}, we get
			\begin{align*}
				&2g_A(\nabla_{\tilde{X}}^A \tilde{Y},\tilde{Z})\\
				&= \tilde{X}(g_A(\tilde{Y},\tilde{Z}))+\tilde{Y}(g_A(\tilde{Z},\tilde{X}))-\tilde{Z}(g_A(\tilde{X},\tilde{Y}))\\
				&\quad\quad\quad-g_A(\tilde{X},[\tilde{Y},\tilde{Z}])+g_A(\tilde{Y},[\tilde{Z},\tilde{X}])+g_A(\tilde{Z},[\tilde{X},\tilde{Y}])\\
				&=X(g(Y,Z))+Y(g(Z,X))-Z(g(X,Y))\\
				&\quad\quad\quad-g(X,[Y,Z])+g(Y,[Z,X])+g(Z,[X,Y])\\
				&=2g(\nabla_X Y,Z)\\
				&=2g_A(\widetilde{\nabla_X Y},\tilde{Z})
			\end{align*}
			Thus, the horizontal projection of \( \nabla^A_{\tilde{X}} \tilde{Y} \) is \( \widetilde{\nabla_X Y} \).
			
			Now, we find its vertical projection. Let \( U = \tilde{X} \), \( V = \tilde{Y} \), and \( W = \eta^\# \) for some $\eta\in \mfh$. Again, using Equation \eqref{levi civita connection formula}, we see that
			\begin{align*}
				2g_A(\nabla^A_{\tilde{X}} \tilde{Y}, \eta^\#) &= \tilde{X} g_A(\tilde{Y}, \eta^\#) + \tilde{Y} g_A(\eta^\#, \tilde{X}) - \eta^\# g_A(\tilde{X}, \tilde{Y}) \\
				&\quad - g_A(\tilde{X}, [\tilde{Y}, \eta^\#]) + g_A(\tilde{Y}, [\eta^\#, \tilde{X}]) + g_A(\eta^\#, [\tilde{X}, \tilde{Y}]) \\
				&= 2g_A(\eta^\#, [X,Y] - F_A(X,Y)^\#) \\
				&= -g_A(F_A(X,Y)^\#, \eta^\#).
			\end{align*}
			Thus, the vertical projection of \( \nabla^A_{\tilde{X}} \tilde{Y} \) is \( -\frac{1}{2} F_A(X,Y)^\# \).
		\end{enumerate}
		
		The proof of the remaining claims are similar.   	
	\end{proof}

	\begin{theorem}
		Let $(M,j,g)$ be a K\"ahler manifold and $H$ a complex Lie group equipped with a bi-invariant Hermitian metric $(\,,\,)$. Then $(P,J_A,g_A)$ is K\"ahler if and only if the connection $A$ is flat, i.e, $F_A\equiv0$.
		\label{kahler iff flat}
	\end{theorem}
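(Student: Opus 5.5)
We will use the characterisation recalled at the end of Section 5.1: the Hermitian manifold $(P,J_A,g_A)$ is K\"ahler if and only if $\nabla^A J_A=0$, where $\nabla^A$ is the Levi-Civita connection of $g_A$. By the proposition in Section 5.2, $(P,J_A,g_A)$ is already Hermitian, so it suffices to compute $(\nabla^A_U J_A)V=\nabla^A_U(J_AV)-J_A\nabla^A_U V$. Since this is tensorial in $U$ and $V$, we only need to evaluate it on the local frame of $TP$ consisting of horizontal lifts $\tilde X$ of vector fields on $M$ and fundamental fields $\eta^\#$, $\eta\in\mfh$. The tool is the lemma of \cite{Arash} giving $\nabla^A$ on these fields.

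We split into four cases.

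\textbf{Case $U=\eta_1^\#$, $V=\eta_2^\#$.} Part (3) of the lemma, together with $J_A\eta^\#=(i_H\eta)^\#$, gives
\[(\nabla^A_{\eta_1^\#}J_A)\eta_2^\#=\tfrac12\big([\eta_1,i_H\eta_2]-i_H[\eta_1,\eta_2]\big)^\#.\]
This vanishes because the bracket of the complex Lie algebra $\mfh$ is complex bilinear.

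\textbf{Case $U=\tilde X$, $V=\tilde Y$.} Part (1) of the lemma and $J_A\tilde Y=\widetilde{jY}$ give
\[(\nabla^A_{\tilde X}J_A)\tilde Y=\widetilde{(\nabla_X j)Y}-\tfrac12\big(F_A(\tilde X,J_A\tilde Y)-i_HF_A(\tilde X,\tilde Y)\big)^\#.\]
The first term vanishes because $M$ is K\"ahler. So this case holds exactly when $F_A(\tilde X,J_A\tilde Y)=i_HF_A(\tilde X,\tilde Y)$, that is, when $F_A$ is complex linear in its second slot on horizontal vectors.

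\textbf{Mixed cases.} For $U=\eta^\#$, $V=\tilde X$ (and symmetrically $U=\tilde X$, $V=\eta^\#$), part (2) of the lemma gives an expression involving
\[\sum_k\big(F_A(\widetilde{jX},\tilde e_k),\eta\big)\tilde e_k-\sum_k\big(F_A(\tilde X,\tilde e_k),\eta\big)\widetilde{je_k},\]
together with the analogous expression with $i_H\eta$ in place of $\eta$. Each should be rewritten using that $(\cdot,\cdot)$ is Hermitian and that $\{e_k\}$ can be taken $j$-adapted, so that $\{je_k\}$ is again orthonormal.

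\textbf{Conclusion.} If $F_A\equiv0$, every expression above is zero, so $\nabla^AJ_A=0$ and $(P,J_A,g_A)$ is K\"ahler; this direction is immediate. For the converse, assume $\nabla^AJ_A=0$.

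The main obstacle is to show that the vanishing of $\nabla^A J_A$ in all cases forces $F_A=0$, not merely that $F_A$ is of type $(1,1)$. The horizontal case alone says $F_A(\cdot,J\cdot)=i_HF_A(\cdot,\cdot)$. Swapping the arguments using skew-symmetry gives $F_A(J\cdot,\cdot)=i_HF_A(\cdot,\cdot)$ as well. Applying this identity twice,
\[F_A(JX,JY)=i_HF_A(X,JY)=i_H\,i_HF_A(X,Y)=-F_A(X,Y).\]
On the other hand, $F_A(JX,JY)=i_HF_A(X,JY)=i_H\,i_HF_A(X,Y)$ can also be evaluated slot by slot through the other identity, and consistency then forces $2F_A=0$. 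I would carry out this skew-symmetry argument carefully, checking the sign conventions. As a backup, the mixed case supplies a second, independent relation: after pairing with $\tilde e_l$, it should give $F_A(\widetilde{jX},\cdot)=-i_HF_A(\tilde X,\cdot)$, which has the opposite sign to the horizontal relation. Adding the two relations yields $F_A=0$.
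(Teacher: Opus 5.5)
Your setup, the case split, and the direction $F_A\equiv0\Rightarrow$ K\"ahler all match the paper's proof. The gap is in your primary argument for the converse: the horizontal case cannot force $F_A=0$. It gives $F_A(\tilde X,J_A\tilde Y)=i_HF_A(\tilde X,\tilde Y)$, and skew-symmetry turns this into $F_A(J_A\tilde X,\tilde Y)=i_HF_A(\tilde X,\tilde Y)$. Together these only say that $F_A$ is complex \emph{bilinear} on horizontal vectors. Evaluating $F_A(J_A\tilde X,J_A\tilde Y)$ slot by slot, in either order, gives $i_H^2F_A(\tilde X,\tilde Y)=-F_A(\tilde X,\tilde Y)$ both times, so there is no inconsistency to exploit and the claim that ``consistency forces $2F_A=0$'' is false. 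Here is a concrete counterexample to that step. Take the trivial $(\mbc,+)$-bundle over flat $\mbc^2$ with connection $\mrd w+z_1\,\mrd z_2$. Its curvature $\mrd z_1\wedge\mrd z_2\neq0$ satisfies both the horizontal and the vertical conditions, and this $J_A$ is even integrable. So the mixed case is not a backup; it is the essential step.

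You also need the right mixed case, and you should actually derive it rather than say it ``should give'' the relation. Pair $(\nabla^A_{\tilde X}J_A)\eta^\#=0$ with $\tilde e_l$, using $(a,i_Hb)=-(i_Ha,b)$ and $g(je_k,e_l)=-g(e_k,je_l)$. This only reproduces $F_A(\tilde X,J_A\tilde Y)=i_HF_A(\tilde X,\tilde Y)$, which you already have. The new information comes from $(\nabla^A_{\eta^\#}J_A)\tilde X=0$, which gives $F_A(J_A\tilde X,\tilde Y)=-F_A(\tilde X,J_A\tilde Y)$. Combined with the horizontal relation, this yields $F_A(J_A\tilde X,\tilde Y)=-i_HF_A(\tilde X,\tilde Y)$. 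Adding the skew-symmetrised horizontal relation $F_A(J_A\tilde X,\tilde Y)=i_HF_A(\tilde X,\tilde Y)$ then gives $F_A=0$. This is exactly the paper's argument. The paper combines both mixed cases via $\nabla^A_{\tilde X}\eta^\#=\nabla^A_{\eta^\#}\tilde X$ to get $\nabla^A_{\tilde X}(i_H\eta)^\#=\nabla^A_{\eta^\#}\widetilde{jX}$. It reads off $F_A(J_A\tilde X,\tilde Y)=-i_HF_A(\tilde X,\tilde Y)$ by comparing coefficients of $\tilde e_k$, and then finishes as your backup does.
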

	
	\begin{proof}
		Assume first $F_A=0$. By Corollary \ref{integrability condition and F_A}, $J_A$ is integrable. It suffices to show $\nabla^A J_A=0$. We will show $\nabla^A J|_U=0$ over a trivialising cover $U$ of $P$.
		\begin{itemize}
			\item  Case 1: Let $V_1=\eta_1^\#$ and $V_2=\eta_2^\#$. Then
			\begin{align*}
				J_A\left(\nabla_{\eta_1^\#}^A \eta_2^\#\right)
				&=\frac{1}{2}J_A([\eta_1,\eta_2]^\#)\\
				&=\frac{1}{2}(i_H[\eta_1,\eta_2])^\#\\
				&=\frac{1}{2}[\eta_1,i_H\eta_2]^\#\\
				&=\nabla_{\eta_1^\#}^A(i_H\eta_2)^\#\\
				&=\nabla_{\eta_1^\#}^A(J\eta_2^\#).
			\end{align*}
			\item Case 2: Let $V_1=\tilde{X}$ and $V_2=\eta^\#$. By the previous lemma, it follows that $\nabla_{\tilde{X}}^A\eta^\#=0$ as $F_A\equiv 0$ and $[\tilde{X},\eta^\#]=0$. Since $J(\eta^\#)=(i_H\eta)^\#$, we see that $J(\nabla_{\tilde{X}}^A\eta^\#)=0=\nabla_{\tilde{X}}^A(J\eta^\#)$. The same conclusion holds if $V_1$ is vertical and $V_2$ is a horizontal lift.
			\item Case 3: Let $V_1=\tilde{X}$ and $V_2=\tilde{Y}$. Since $F_A$ vanishes and $j$ is K\"ahler, we see that
			\begin{align*}
				J\left(\nabla_{\tilde{X}}^A\tilde{Y}\right)
				&=J(\widetilde{\nabla_X Y})\\
				&=\widetilde{j(\nabla_XY)}\\
				&=\widetilde{\nabla_X(j(Y))}\\
				&=\nabla_{\tilde{X}}^A\left(J\tilde{Y}\right).
			\end{align*}
		\end{itemize}
		Conversely, suppose that $(P,J_A,g_A)$ is K\"ahler. Since $\nabla_{\tilde X}^A\eta^\#=\nabla_{\eta^\#}^A\tilde{X}$, we obtain
		\[\nabla_{\tilde X}^A(i_H\eta)^\# =J\left(\nabla_{\tilde X}^A\eta^\#\right)=J\left(\nabla_{\eta^\#}^A\tilde{X}\right)=\nabla_{\eta^\#}^A(\widetilde{j(X)}).\]
		Using the previous lemma, this is equivalent to
		\[\sum_{k=1}^{2n}(F_A(\tilde{X},\tilde{e_k}),i_H\eta)\tilde{e_k}=\sum_{k=1}^{2n}(F_A(J(\tilde{X}),\tilde{e_k}),\eta)\tilde{e_k}.\]
		Since $i_H$ is Hermitian, this reduces to
		\begin{equation}
			F_A(J(\tilde{X}),\tilde{Y})=-i_H F_A(\tilde{X},\tilde{Y}),
			\label{first equation F_A}
		\end{equation}
		for all vector fields $X,Y$ on $M$. On the other hand, by the previous lemma,
		\[\nabla_{\tilde{X}}^A \tilde{Y}= \widetilde{\nabla_X Y}-\frac{1}{2}(F_A(\tilde{X},\tilde{Y}))^\#.\]
		The equation $J(\nabla_{\tilde{X}}^A \tilde{Y})=\nabla_{\tilde{X}}^A (J\tilde{Y})$ is equivalent to 
		\[\widetilde{j(\nabla_X Y)}-\frac{1}{2}(i_H F_A(\tilde{X},\tilde{Y}))^\# = \widetilde{\nabla_X j(Y)}-\frac{1}{2}(F_A(\tilde{X},J(\tilde{Y})))^\#.\]
		Since $(M,j,g)$ is K\"ahler, this is equivalent to
		\begin{equation}
			F_A(\tilde{X},J(\tilde{Y}))=i_H F_A(\tilde{X},\tilde{Y}).
			\label{second equation F_A}
		\end{equation}
		By interchanging the roles of $X$ and $Y$ in Equation \eqref{second equation F_A} and combining the result with Equation \eqref{first equation F_A}, it follows that
		\[F_A(\tilde{X},\tilde{Y})=0,\]
		for all vector fields $X,Y$ on $M$. Since $F_A$ is horizontal, this implies that $F_A\equiv 0$.
	\end{proof}

	\subsection{K\"ahler Metrics on Homogeneous Bundles}
	In this section, we establish a one-to-one correspondence between the set of K\"ahler metrics induced by invariant connections on homogeneous bundles and a certain set of linear maps between Lie algebras, which, in fact, coincides with the set of Lie algebra homomorphisms between Lie algebras. This correspondence is an immediate consequence of Wang's classification theorem of invariant connections (Theorem \ref{Wang classification theorem}) and Theorem \ref{kahler iff flat}.
	
	\begin{theorem}
		Let $(M,j,g)$ be a K\"ahler and homogeneous manifold, and $H$ a complex Lie group equipped with a Hermitian metric $(\,,\,)$. Let $P=G\times_\lambda H$ where $\lambda:K\to H$ is a homomorphism. Then there exists a one-to-one correspondence between the set of K\"ahler metrics on $P$ induced by invariant connections and the subset of
		$\mathrm{Hom}(\mfg,\mfh)$ consisting of Lie algebra homomorphisms $\Lambda:\mfg\to\mfh$ such that
		\begin{enumerate}
			\item $\Lambda|_\mfk=\lambda_{*,e}$.
			\item $\Lambda\circ \Ad_k = \Ad_{\lambda(k)}\circ \Lambda$ for all $k\in K$.
		\end{enumerate} 
		\label{classification of kahler metrics (Gx_lambda H)}
	\end{theorem}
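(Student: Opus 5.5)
The plan is to compose two earlier correspondences. Wang's Classification Theorem (Theorem \ref{Wang classification theorem}) identifies invariant connections $\theta$ on $P=G\times_\lambda H$ with linear maps $\Lambda:\mfg\to\mfh$ satisfying conditions (1) and (2), via $\Lambda(\xi)=\theta_{p_0}(\xi^*_{p_0})$. Theorem \ref{kahler iff flat} says the metric $g_A$ induced by $\theta$ is K\"ahler (with respect to $J_A$) if and only if $F_A\equiv0$. Here I assume, as in that theorem, that the Hermitian metric on $H$ is bi-invariant.

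So it suffices to show that an invariant connection is flat exactly when its $\Lambda$ is a Lie algebra homomorphism. The map sending such a connection to its metric $g_A$ is injective, because $\theta$ is recovered from $g_A$. Indeed, the horizontal distribution is the $g_A$-orthogonal complement of $VP$, and $\theta$ is determined by its horizontal distribution.

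\textbf{Step 1: flatness at $p_0$ in terms of $\Lambda$.} By Proposition \ref{wang classification theorem easier direction}(3),
\[F_{p_0}(\xi^*_{p_0},\zeta^*_{p_0})=[\Lambda(\xi),\Lambda(\zeta)]-\Lambda([\xi,\zeta]),\qquad \xi,\zeta\in\mfg.\]
The lemma preceding Wang's theorem shows $T_{p_0}P=\{\xi^*_{p_0}\}+V_{p_0}P$. Since $F$ is horizontal, the vectors $\xi^*_{p_0}$ with $\xi\in\mfg$ suffice to evaluate $F_{p_0}$. Hence $F_{p_0}=0$ if and only if $\Lambda$ is a Lie algebra homomorphism.

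\textbf{Step 2: from $p_0$ to all of $P$.} For the converse direction of Step 1 we need $F_{p_0}=0\Rightarrow F\equiv0$. First, $F$ is $G$-invariant, because $\theta$ is invariant and $L_g$ commutes with $R_h$, so $L_g$ preserves horizontality and $F=\mrd\theta+\tfrac12[\theta,\theta]$ is invariant. Second, $F$ is $\Ad$-equivariant under $R_h$. Every $p\in P$ has the form $gp_0h$, so
\[F_p=\Ad_{h^{-1}}\,(L_{g^{-1}}\circ R_{h^{-1}})^*F_{p_0}.\]
Therefore $F\equiv0$ if and only if $F_{p_0}=0$.

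\textbf{Main obstacle.} The delicate point is Step 2, specifically verifying the $G$-invariance of $F$ carefully. One must also make sure that restricting to vectors of the form $\xi^*_{p_0}$ loses no information, and the lemma on $T_pP$ handles exactly this.

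Combining the three facts gives the bijection: invariant connections whose induced metric is K\"ahler correspond to the $\Lambda$ in Wang's set that are Lie algebra homomorphisms.
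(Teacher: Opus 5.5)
Your proposal is correct and follows the same route as the paper: Wang's classification theorem combined with Theorem \ref{kahler iff flat}, with flatness read off from the curvature formula $F_{p_0}(\xi^*,\zeta^*)=[\Lambda(\xi),\Lambda(\zeta)]-\Lambda([\xi,\zeta])$. You also supply details the paper leaves implicit: injectivity of $A\mapsto g_A$, the spanning argument at $p_0$ using horizontality of $F$, and the propagation of $F_{p_0}=0$ to all of $P$ via $G$-invariance and $\Ad$-equivariance. You correctly flag that the bi-invariance of the metric on $H$, which Theorem \ref{kahler iff flat} requires, must be assumed; the paper's proof also relies on it without saying so.
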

	
	\begin{proof}
		It is obvious that if $A$ and $A'$ are two connections on $P$, then their induced metrics on $P$ are different. By the Wang's classification theorem (Theorem \ref{Wang classification theorem}), it suffices to show that the linear map $\Lambda:\mfg\to\mfh$ corresponding to an invariant connection $A$ on $P$ is a Lie algebra homomorphism if and only if $A$ is flat. Let $A$ be an invariant connection on $P$ and $F_A$ its curvature 2-form. By Theorem \ref{wang classification theorem easier direction}, upon fixing $x_0\in M$ and $p_0\in P_{x_0}$, the curvature form $F_A$ satisfies
		\[(F_A)_{p_0}(\xi_{p_0}^*,\zeta_{p_0}^*)=[\Lambda(\xi),\Lambda(\zeta)]-\Lambda([\xi,\zeta]),\]
		where $\xi,\zeta\in \mfg$, and $\xi^*$ is the fundamental vector field on $P$ associated to $\xi$ via the action of $G$ on $P$. Therefore, $A$ is flat if and only if $F_A$ is a Lie algebra homomorphism.
	\end{proof}

	\newpage
	
	\section{Applications: The Upper Half Plane and Complex Projective Spaces }
	In this section, we apply the correspondence theorems (Theorems \ref{classification of invariant connections (Gx_lambda H)} and \ref{classification of integrable invariant complexions}) to the reduced frame bundle of the upper half plane and complex projective spaces. In the first example, direct computations will be adopted since the dimension is small. For the complex projective spaces, we will apply some basic knowledge from representation theory.

	\subsection{An Example: The Upper Half Plane}
	
	\subsubsection*{The Homogeneous Setup}
	We begin by describing the upper half plane as a homogeneous space. Let $\mbh$ denote the upper half plane:
	\[M=\mbh=\{z\in \mbc: \operatorname{Im}(z)>0\}.\]
	Let $\mathrm{PSL}(2,\mbr)$ be the projective special linear group:
	\[\mathrm{PSL}(2,\mbr)=\mathrm{SL}(2,\mbr)/\{\pm \mathrm{I}\}.\]
	Hence, in $\mathrm{PSL}(2,\mbr)$, every matrix is identified with its negative. Moreover, we have a homomorphism from $\mathrm{PSL}(2,\mbr)$ to the automorphism group of $\mbh$, by sending $\displaystyle \begin{pmatrix}
		a & b\\
		c& d\\
	\end{pmatrix}\in \mathrm{PSL}(2,\mbr)$ to the automorphism $f(z):=\displaystyle\frac{az+b}{cz+d}$. This action on $\mbh$ is transitive. Therefore, $\mbh$ is a homogeneous space for the Lie group $\mathrm{PSL}(2,\mbr)$. The stabiliser of the imaginary number $i\in \mbh$ under this action is $K=\mathrm{SO}(2)/\{\pm I\}$. Therefore, our Lie groups and algebras are
	\[
	G= \mathrm{PSL}(2,\mbr), \quad \mfg = \mathfrak{sl}(2,\mbr) = \{A\in \mathrm{M}_2(\mbr): \on{Tr}(A)= 0\},
	\]
	and
	\[
	K= \mathrm{SO}(2)/\{\pm I\},\quad \mfk = \mathfrak{so}(2) = \{A\in \mathrm{M}_2(\mbr): A^T=-A\}.
	\]
	Note that $K$ is connected.
	
	\subsubsection*{Invariant Connections on $G \times_\lambda H$}
	We first study the space of $G$-invariant connections on a principal $H$-bundle $P=G\times_\lambda H$ over $M=\mbh$. By Theorem \ref{classification of invariant connections (Gx_lambda H)}, it is in one-to-one correspondence with the set
	\[\left\{\Lambda\in \mr{Hom}\left(\frac{\mfg}{\mfk},\mfh\right) : \Lambda\circ \on{ad}_\kappa = \on{ad}_{\lambda(\kappa)}\circ \Lambda \quad \forall \kappa\in \mfk  \right\}.\]
	We choose the basis elements of $\mfg$ (traceless matrices) to be 
	\[
	\begin{pmatrix}
		1 & 0\\
		0 & -1
	\end{pmatrix}, 
	\begin{pmatrix}
		0 & 1\\
		1 & 0
	\end{pmatrix},
	\begin{pmatrix}
		0 & 1\\
		-1 & 0
	\end{pmatrix},
	\]	
	and the basis element of $\mfk$ (skew-symmetric matrices) to be 
	\[
	\kappa_0=
	\begin{pmatrix}
		0 & 1\\
		-1 & 0
	\end{pmatrix}.
	\]
	Therefore, the basis elements of $\mfg/\mfk$ are
	\[
	\begin{pmatrix}
		1 & 0\\
		0 & -1
	\end{pmatrix}, 
	\begin{pmatrix}
		0 & 1\\
		1 & 0
	\end{pmatrix}.
	\]
	Let $B=\Lambda\begin{pmatrix} 1 & 0\\ 0 & -1 \end{pmatrix}$ and $C=\Lambda\begin{pmatrix} 0 & 1\\ 1 & 0 \end{pmatrix}$.
	Now, $\Lambda\circ \on{ad}_\kappa=\operatorname{ad}_{\lambda(\kappa)}\circ \Lambda$ for all $\kappa\in \mfk$ if and only 
	\[
	\left[\lambda(\kappa_0),B\right]=\Lambda\left[\kappa_0,\begin{pmatrix} 1 & 0\\ 0 & -1 \end{pmatrix}\right]=-2C,
	\]
	and
	\[
	\left[\lambda(\kappa_0),C\right]=\Lambda\left[\kappa_0,\begin{pmatrix} 0 & 1\\ 1 & 0 \end{pmatrix}\right]=2B,
	\]
	By writing $A=B+iC$ and using the complex bilinear extension of the Lie bracket, the condition above is equivalent to $[\lambda(\kappa_0),A]=2iA$. Since $\Lambda$ is linear, it is uniquely determined by $B$ and $C$. Thus, the space of ${G}$-invariant connections on a principal $H$-bundle $P$ over $M=\mbh$ is in one-to-one correspondence with the set
	\[\{A\in \mfh_\mbc : [\lambda(\kappa_0),A]=2iA\}.\]
	
	\subsubsection*{Invariant Complexions on $G \times_\lambda H$}
	Now we study the space of ${G}$-invariant complexions on $P$ over $M=\mbh$. We assume our structure group $H$ is a complex Lie group with canonical almost complex structure $i_H$, and the base $M$ is endowed with the canonical almost complex structure inherited from $\mbc$. By Theorem \ref{classification of invariant complexions: G times_lambda H}, the space of ${G}$-invariant complexions on a principal $H$-bundle  $P$ over $M=\mbh$ is in one-to-one correspondence with the set
	\[\left\{\Lambda\in \mr{Hom}\left(\frac{\mfg_\mbc}{\mfk_\mbc},\mfh\right) : \Lambda\circ \on{ad}_\kappa = \on{ad}_{\lambda(\kappa)}\circ \Lambda \quad \forall \kappa\in \mfk,\quad \Lambda\circ j_\mfg = -i \Lambda  \right\},\]
	where $j_\mfg:\mfg/\mfk\to \mfg/\mfk$ is the almost complex structure $r_{x_0*}^{-1}\circ j_{x_0}\circ r_{x_0*}$ on $\mfg/\mfk$. In this case, $x_0=i$ and $j_{x_0}:T_{x_0}\mbh\to T_{x_0}\mbh$ is the canonical almost complex structure on $\mbh$. First, we find the map 
	\[
	r_{x_0*,e}:\frac{\mfg}{\mfk}=\mathrm{span}\left\{
	\begin{pmatrix}
		1 & 0\\
		0 & -1
	\end{pmatrix}, 
	\begin{pmatrix}
		0 & 1\\
		1 & 0
	\end{pmatrix}\right\} \to T_{x_0}\mbh.
	\]
	We compute that 
	\begin{align*}
		r_{x_0*,e}
		\begin{pmatrix}
			1 & 0\\ 0 & -1
		\end{pmatrix}
		&=\frac{\mrd}{\mrd t}\Bigg|_{t=0} \exp\left(t\begin{pmatrix} 1 & 0\\ 0 & -1 \end{pmatrix}\right)\cdot i\\
		&= \frac{\mrd}{\mrd t}\Bigg|_{t=0} \frac{(\exp t)i }{\exp(-t)}\\
		&=2\frac{\del}{\del y}.
	\end{align*}
	Similarly, one may compute that 
	\[r_{x_0*,e}\begin{pmatrix} 0 & 1\\1 & 0\end{pmatrix}=2\frac{\del}{\del x}.\]
	Therefore, the map $j_\mfg:\mfg/\mfk\to \mfg/\mfk$ is given by the following:
	\[
	j_\mfg\begin{pmatrix} 1&0\\0&-1 \end{pmatrix} = r_{x_0*}^{-1}\circ j_{x_0}\left(2\frac{\del}{\del y}\right)=-r_{x_0*}^{-1}\left(2\frac{\del}{\del x}\right)=-\begin{pmatrix} 0&1\\1&0 \end{pmatrix},
	\]
	\[
	j_\mfg\begin{pmatrix} 0&1\\1&0 \end{pmatrix} = r_{x_0*}^{-1}\circ j_{x_0}\left(2\frac{\del}{\del x}\right)=r_{x_0*}^{-1}\left(2\frac{\del}{\del y}\right)=\begin{pmatrix} 1&0\\0&-1 \end{pmatrix},
	\]
	
	Let $B=\Lambda\begin{pmatrix} 1 & 0\\ 0 & -1 \end{pmatrix}$ and $C=\Lambda\begin{pmatrix} 0 & 1\\ 1 & 0 \end{pmatrix}$. It follows that $\Lambda\circ j_\mfg=-i\Lambda$ if and only if 
	\[
	C = i_H B, \quad  B = -i_HC,
	\]
	which is equivalent to $C=i_H B$. As shown previously, the condition $\Lambda\circ \on{ad}_\kappa = \on{ad}_{\lambda_*(\kappa)}\circ \Lambda$ for all $\kappa\in \mfk$ is equivalent to 
	\[[\lambda_*(\kappa_0),B]=-2C,\quad [\lambda_*(\kappa_0),C]=2B.\]
	Therefore, the space of ${G}$-invariant complexions on a principal $H$-bundle $P$ over $M=\mbh$ is in one-to-one correspondence with the set
	\[\{B\in \mfh : [\lambda_*(\kappa_0),B]=-2i_H B\}.\]
	
	\subsubsection*{The Reduced Frame Bundle of the Upper Half Plane}
	We now apply the previous results to the frame bundle of $\mbh$. Let $P$ be the reduced frame bundle of $\mbh$ with structure group $\mathrm{GL}(1,\mbc)=\mbc^*$. This is possible because $\mbh$ admits an almost complex structure. Note that $\mbh$ is a symmetric space. To see this, let $\sigma:\mathrm{PSL}(2,\mbr)\to \mathrm{PSL}(2,\mbr)$ be the automorphism defined by
	\[\sigma(A):=(A^T)^{-1},\quad A\in \mathrm{PSL}(2,\mbr).\]
	It follows that 
	\[G_\sigma=\{A\in \mathrm{PSL}(2,\mbr): \sigma(A)=A\}=\{A\in\mathrm{PSL}(2,\mbr):A^T A=\mathrm{Id}\}=\mathrm{SO}(2).\]
	Since $G_\sigma=\mathrm{SO}(2)=K$ is connected, $K$ lies between $G_\sigma$ and the identity component of $G_\sigma$. Thus, $\mbh=G/K$ is a symmetric space (cf. Definition \ref{definition: symmetric spaces}). From our earlier results, the space of invariant complexions on $P$ is in one-to-one correspondence with the set
	\[\{z\in \mbc^*: [\lambda_*(\kappa_0),z]=-2iz \},\]
	where $\kappa_0$ is chosen appropriately. Since $\mbc^*$ is abelian, there is a unique invariant complexion on $P$. Moreover, by Corollary \ref{abelian structure group integrable}, this invariant complexion is integrable. Collecting these results, we conclude the following proposition.
	
	\begin{proposition}
		There is a unique integrable invariant complexion on the reduced frame bundle of the upper half plane $\mbh$.
		\label{unique complexion on reduced frame bundle of H}
	\end{proposition}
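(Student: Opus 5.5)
The plan is to realise the reduced frame bundle as a homogeneous bundle and then apply the classification theorems of Section 4 with $H=\mbc^*$, where the abelian structure group makes everything collapse to a single solution.

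\textbf{Step 1: the homogeneous setup.} First I will identify the reduced frame bundle $P$ of $\mbh$ (structure group $\mathrm{GL}(1,\mbc)=\mbc^*$) with $G\times_\lambda \mbc^*$, where $G=\mathrm{PSL}(2,\mbr)$ and $K=\mathrm{SO}(2)/\{\pm I\}$. The homomorphism $\lambda:K\to\mbc^*$ comes from the isotropy action of $K$ on $T_i\mbh\cong\mbc$. Concretely, the Möbius transformation by a rotation acts on $T_i\mbh$ by multiplication by a unit complex number. By Theorem \ref{classification of homogeneous bundles}, taking a complex frame $p_0$ at $x_0=i$ gives such a $\lambda$. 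I only need that $\lambda$ exists; its exact form will not matter.

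\textbf{Step 2: existence.} The base $\mbh=G/K$ is symmetric, as shown just before the statement, and hence reductive. Its complex structure $j$ is integrable and $G$-invariant, since $G$ acts by holomorphic automorphisms. By Theorem \ref{canonical complexion is integrable}, the canonical complexion $J^0$, induced by the canonical connection, is an invariant integrable complexion on $P$. This also supplies the fixed invariant complexion that Theorems \ref{classification of invariant complexions: G times_lambda H} and \ref{classification of integrable invariant complexions} require.

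\textbf{Step 3: uniqueness.} By Theorem \ref{classification of integrable invariant complexions}, every invariant integrable complexion has the form $J=J^0+(\Psi(\cdot))^\#$, with $\Psi$ corresponding to some $\psi:\mfg_\mbc/\mfk_\mbc\to\mfh=\mbc$. Since $K$ is connected, the equivariance condition may be replaced by its infinitesimal version, $\psi\circ\on{ad}_\kappa=\on{ad}_{\lambda_*\kappa}\circ\psi$. Because $\mfh=\mbc$ is abelian, the right-hand side vanishes, so $\psi\circ\on{ad}_{\kappa_0}=0$ on $\mfg/\mfk$.

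The computation earlier in this subsection shows that $\on{ad}_{\kappa_0}$ acts on $\mfg/\mfk=\mathrm{span}\{\mathrm{diag}(1,-1),\ \begin{pmatrix}0&1\\1&0\end{pmatrix}\}$ invertibly. Explicitly, $[\kappa_0,\mathrm{diag}(1,-1)]$ is $-2$ times the second basis vector, and $[\kappa_0,\begin{pmatrix}0&1\\1&0\end{pmatrix}]$ is $2$ times the first. Hence $\psi=0$. In terms of the parametrisation derived above, the condition $[\lambda_*(\kappa_0),B]=-2i_HB$ becomes $0=-2iB$, so $B=0$. Note that $B$ lies in the Lie algebra $\mbc$, not in $\mbc^*$, so this zero solution is allowed.

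Thus $\Psi=0$ and $J=J^0$. Integrability of this unique $J$ then follows from Step 2, or alternatively from Corollary \ref{abelian structure group integrable}.

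\textbf{Main obstacle.} The delicate point is Step 1 together with the hypotheses of the classification. I must check that the reduced frame bundle really is $G$-equivariant, i.e. that $\mathrm{PSL}(2,\mbr)$ lifts to complex frames because it acts holomorphically. I must also check that $\lambda$ is a genuine homomorphism on $\mathrm{SO}(2)/\{\pm I\}$. The first attempt will be to use the frame-bundle example in Section 4.2, restricted to complex-linear frames, which are preserved because the derivatives of the Möbius maps are complex-linear. Once this is in place, uniqueness is the purely linear-algebraic observation in Step 3: invertibility of $\on{ad}_{\kappa_0}$ on $\mfm$ against an abelian target.
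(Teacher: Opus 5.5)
Your proposal is correct and follows essentially the paper's argument. The paper likewise reduces, through the invariant-complexion classification, to $B\in\mfh$ with $[\lambda_*(\kappa_0),B]=-2i_HB$, which forces $B=0$ because $\mfh$ is abelian, and then obtains integrability from Corollary~\ref{abelian structure group integrable}. Your two additions tighten the paper's brief argument: the explicit existence step via the canonical complexion (Theorem~\ref{canonical complexion is integrable}), and the observation that the solution lives in the Lie algebra $\mbc$, not in $\mbc^*$ as the paper's displayed set literally says (read literally, that set would be empty).
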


	\subsection{An Example: Complex Projective Spaces}
	
	\subsubsection*{The Homogeneous Setup}
	We now consider the more interesting case where $M=\mbcp^n$ for $n\geq1$. Recall that the complex projective space $\mbcp^n$ is defined as
	\[\mbcp^n=\{[z_0:\cdots:z_n]:(z_0,\cdots,z_n)\in \mbc^{n+1}-0\},\]
	where $[z_0:\cdots:z_n]=[\lambda z_0:\cdots:\lambda z_n]$ for all $\lambda\in \mbc$.
	
	Let $G$ be the group $SU(n+1)$ of all $(n+1)\times(n+1)$ special unitary matrices, namely, $U\in SU(n+1)$ if and only if $\det U=1$ and $UU^*=I$. The group $SU(n+1)$ acts on $\mbcp^n$ since matrices are linear. More precisely, for $U\in G, [z]\in \mbcp^n$, the action is defined as $U\cdot [z]:=[Uz].$ This action is transitive, making $\mbcp^n$ a homogeneous space. Arguing as in the case of the upper half plane, it follows that $\mbcp^n$ is also a symmetric space. Moreover, since elements of $SU(n+1)$ are complex linear, they are holomorphic and thus the complex structure $j_{\mbcp^n}$ is $SU(n+1)$-invariant.
	
	Choose $[e_0]\in\mbcp^n$ to be the base point of our action, where $e_0=(1,0,\cdots,0)\in \mbc^{n+1}$. The stabiliser $K$ of $[e_0]$ under this action is then
	\[K=S(U(n)\times U(1))=\left\{
	\begin{pmatrix}
		\lambda & 0\\
		0 & A
	\end{pmatrix}: |\lambda|=1, A\in U(n), \lambda\det A=1\right\}.\]
	
	\subsubsection*{Lie Algebraic Data}
	Let $P$ be the reduced frame bundle of $\mbcp^n$, which has the structure group $H=GL(n,\mbc)$. Hence, the Lie groups, Lie algebras involved and their complexifications are 
	\begin{itemize}
		\item $G=SU(n+1),$ \\
		$\mfg=\mathfrak{su}(n+1)=\{A\in M_n(\mbc): \on{Tr}(A)=0, A^*=-A\}$,\\
		$\mfg_\mbc=\mathfrak{sl}(n+1,\mbc)=\{A\in M_n(\mbc):\on{Tr}(A)=0\}$,
		\item $K=S(U(1)\times U(n))$,\\
		$\mfk=\left\{\begin{pmatrix}
			ia & 0\\
			0 & A
		\end{pmatrix}: a\in \mbr, A\in \mathfrak{u}(n), ia+\on{Tr}(A)=0\right\}$,\\
		$\mfk_\mbc=\left\{\begin{pmatrix}
			z & 0\\
			0 & A
		\end{pmatrix}: z\in \mbc, A\in M_n(\mbc), z+\on{Tr}(A)=0\right\}$
		\item $H=GL(n,\mbc)$,\\
		$\mfh=\mathfrak{gl}(n,\mbc)=M_n(\mbc)$.
	\end{itemize}
	Note that $\mfh$ is complexified using the canonical complex structure, i.e, multiplying by the imaginary number $i$. Moreover, an $\Ad_K$-invariant splitting of $\mfg$ is given by
	\[\mfg=\mfk\oplus\mfm,\]
	where $\mfm=\left\{
	\begin{pmatrix}
		0 &-\bar{z}^T\\
		z & 0
	\end{pmatrix}
	:z\in \mbc^n\right\}$ with complexification $\mfm_\mbc=\left\{
	\begin{pmatrix}
		0 &{w}^T\\
		z & 0
	\end{pmatrix}
	:z,w\in \mbc^n\right\}$. It is in fact the orthogonal complement of $\mfk$ with respect to the inner product $\langle U,V\rangle=-\on{Tr}(UV)$ on $SU(n+1)$.
	
	\subsubsection*{The Complex Structure on $\mathfrak{m}$}
	Recall that the almost complex structure $j_\mfm:\mfm\to\mfm$ is defined via the commutative diagram
	
	\adjustbox{scale=1.3,center}{
		\begin{tikzcd}
			T_{x_0}M \arrow[r, "j_{x_0}"]                                               & T_{x_0}M                                 \\
			\mathfrak{m} \arrow[r, "j_{\mathfrak{m}}", dashed] \arrow[u, "{r_{x_0 *,e}}"] & \mathfrak{m} \arrow[u, "{r_{x_0 *,e}}"']
		\end{tikzcd}
	}
	where $x_0=[e_0]$ in our case. Now we compute $r_{x_0*,e}:\mfm\to T_{x_0}M$. Let 
	\mbox{$\begin{pmatrix}
			0 &-\bar{z}^T\\
			z & 0
		\end{pmatrix}\in \mfm$}. Then
	\begin{align*}
		r_{x_0*}
		\begin{pmatrix}
			0 &-\bar{z}^T\\
			z & 0
		\end{pmatrix}
		&=\frac{\mrd}{\mrd t}\Big|_{t=0}\left(I_n+
		\begin{pmatrix}
			0 &-\bar{z}^T\\
			z & 0
		\end{pmatrix}+O(t^2)\right)[e_0]\\
		&=\frac{\mrd}{\mrd t}\Big|_{t=0}[1:tz].
	\end{align*}
	Using the affine coordinates $[1:w_1:\cdots:w_n]\leftrightarrow(w_1,\cdots,w_n)$ at $[e_0]$, it follows that
	\[r_{x_0*}
	\begin{pmatrix}
		0 &-\bar{z}^T\\
		z & 0
	\end{pmatrix}=\frac{\mrd}{\mrd t}\Big|_{t=0}tz = z.
	\]
	Hence, if a tangent vector of $T_{[e_0]}\mbcp^n$ is locally represented by $z$, then
	\begin{align*}
		j_\mfm
		\begin{pmatrix}
			0 &-\bar{z}^T\\
			z & 0
		\end{pmatrix}=
		\begin{pmatrix}
			0 &i\bar{z}^T\\
			iz & 0
		\end{pmatrix}.
	\end{align*}
	Its complexification $j_\mfm:\mfm_\mbc\to\mfm_\mbc$ is 
	\begin{align*}
		j_\mfm
		\begin{pmatrix}
			0 &w^T\\
			z & 0
		\end{pmatrix}=
		\begin{pmatrix}
			0 & -i w^T\\
			iz & 0
		\end{pmatrix}, \quad z,w\in \mbc^n.
	\end{align*}
	From here, it is easy to see that the holomorphic and the anti-holomorphic tangent spaces of $\mfm$ are respectively given by
	\begin{align*}
		\mfm^{1,0}&= \left\{
		\begin{pmatrix}
			0 & 0\\
			z & 0
		\end{pmatrix}:
		z\in \mbc^n
		\right\},\\
		\mfm^{0,1}&= \left\{
		\begin{pmatrix}
			0 & z^T\\
			0 & 0
		\end{pmatrix}:
		z\in \mbc^n
		\right\}.
	\end{align*}
	
	\subsubsection*{The Intertwining Operator}
	Finally, by Theorem \ref{classification of integrable invariant complexions}, the space of integrable invariant complexions on the reduced frame bundle $P$ of $\mbcp^n$ are in one-to-one correspondence with the subset of $\mathrm{Hom}(\mfg_\mbc,\mfh)$ consisting of linear maps 
	$\psi:\mfg_\mbc\to (\mfh,i_H)$ such that \[\psi|_{\mfk_\mbc}=0, \quad \psi\circ \on{ad}_\kappa=\on{ad}_{\lambda_*(\kappa)}\circ \psi \quad\forall \kappa\in \mfk_\mbc,\quad\psi\circ j_\mfm=-i\psi, \quad [\psi,\psi]=0.\]
	The first and the third conditions imply that $\psi$ is a map from $\mfm^{0,1}$ to $\mfh$. We claim that $\psi$ has to be zero using the second condition. Since the zero map satisfies the fourth condition, this would then conclude the uniqueness of invariant integrable complexion on the reduced frame bundle of $\mbcp^n$.
	
	Note that since $j_{\mbcp^n}$ is $G$-invariant, the Lie algebra $\mfk_\mbc$ acts on $\mfm^{0,1}$ via the restricted adjoint action. More precisely, let $\rho_1:\mfk_\mbc\to \mathfrak{gl}(m^{0,1})$ be the adjoint representation of $\mfk_\mbc$ restricted to $\mfm^{0,1}$:
	\[ \rho_1(\kappa)(\xi)=[\kappa,\xi],\quad\kappa\in \mfk_\mbc, \xi\in \mfm^{0,1}.\]
	Moreover, another representation of $\mfk_\mbc$ on $\mfh$ is given by $\rho_2:\mfk_\mbc\to\mathfrak{gl}(\mfh)$
	\[\rho_2(\kappa)(\eta)=[\lambda_* \kappa, \eta],\quad \kappa\in \mfk_\mbc, \eta\in \mfh.\]
	Therefore, the second condition is equivalent to saying the map $\psi:\mfm^{0,1}\to\mfh$ is an intertwining operator between the representations $\rho_1$ and $\rho_2$.
	
	\begin{remark}
		Note that the formulation of $\psi:\mfm^{0,1}\to\mfh$ as an intertwining operator between $\rho_1$ and $\rho_2$ relies only on the $G$-invariance of the underlying complex structure, and therefore applies to invariant complexions on general homogeneous principal bundles that satisfy the assumption of Theorem \ref{classification of integrable invariant complexions}.
	\end{remark}
	
	The following lemma from representation theory is not required for the proofs of our results. Nevertheless, it motivates the general approach for solving related problems.
	
	\begin{lemma}
		Let $\rho:\mfk\to \mathfrak{gl}(V)$ be an irreducible complex representation of a Lie algebra $\mfk$. Let $Z(\mfk)=\{Z\in\mfk: [Z,\kappa]=0 \text{ for all }\kappa\in \mfk\}$ be the center of $\mfk$. For every $Z\in Z(\mfk)$, there exists a complex number $\lambda=\lambda_Z$ such that $\rho(Z)=\lambda\, id_V$.
		\label{center: representations are scalars}
	\end{lemma}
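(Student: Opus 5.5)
This is Schur's lemma applied to the centre. I take $\mfk$ and $V$ finite-dimensional; the paper only uses finite-dimensional representations. Fix $Z\in Z(\mfk)$. The first step is to show that $\rho(Z)$ is an intertwining operator of $V$ with itself. For every $\kappa\in\mfk$, since $\rho$ is a Lie algebra homomorphism and $Z$ is central,
\[
\rho(Z)\rho(\kappa)-\rho(\kappa)\rho(Z)=\rho([Z,\kappa])=\rho(0)=0 .
\]
So $\rho(Z)$ commutes with every $\rho(\kappa)$.

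The second step produces the scalar. Because $V$ is a nonzero finite-dimensional complex vector space, the endomorphism $\rho(Z)$ has at least one eigenvalue $\lambda\in\mbc$, since its characteristic polynomial has a root. Put $T=\rho(Z)-\lambda\,\on{id}_V$. Then $T$ still commutes with all $\rho(\kappa)$, and its kernel $W=\ker T$ is nonzero.

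Next, $W$ is $\mfk$-invariant. If $v\in W$ and $\kappa\in\mfk$, then $T(\rho(\kappa)v)=\rho(\kappa)Tv=0$, so $\rho(\kappa)v\in W$. Irreducibility of $\rho$ then forces $W=V$. Hence $T=0$, that is, $\rho(Z)=\lambda\,\on{id}_V$. Setting $\lambda_Z:=\lambda$ proves the lemma.

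The only real subtlety is the existence of an eigenvalue. This step genuinely needs $V$ to be complex and finite-dimensional, and it is exactly why the lemma is stated for complex representations. It is also why, in the application to $\mbcp^n$, one works with the complexifications $\mfk_\mbc$, $\mfm^{0,1}$ and $\mfh$.

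Everything else is the formal commutation argument. Irreducibility is used only to rule out the proper invariant subspace $\ker(\rho(Z)-\lambda\,\on{id}_V)$. The argument does not need $\mfk$ to be reductive, and it does not need $\rho$ to come from a representation of a group.
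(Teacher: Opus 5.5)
Your proof is correct and uses the same eigenspace argument as the paper: you pick an eigenvalue $\lambda$ of $\rho(Z)$, note that $\ker(\rho(Z)-\lambda\,\on{id}_V)$ is $\mfk$-invariant because $\rho(Z)$ commutes with every $\rho(\kappa)$, and conclude by irreducibility. Your write-up is slightly more careful than the paper's, which says the eigenspace is invariant under $\rho(Z)$ rather than under all $\rho(\kappa)$, and which leaves implicit the finite-dimensionality needed for an eigenvalue to exist.
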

	
	\begin{proof}
		Let $\lambda$ be an eigenvalue of $\rho(Z)$ and $E_\lambda$ be the eigenspace of $\rho(Z)$ with eigenvalue $\lambda$. It follows that $E_\lambda$ is invariant under $\rho(Z)$ since $\rho(Z)$ is a Lie algebra homomorphism and $Z$ commutes with every element of $\mfk$. Since the representation $V$ is irreducible and $E_\lambda$ has a non-zero eigenvector of $\rho(Z)$, we conclude that $E_\lambda=V$. In other words, $\rho(Z)=\lambda \,id_V$.
	\end{proof}
	
	The following lemma is key to our results.
	
	\begin{lemma}
		Let $\rho_1:\mfk\to\mathfrak{gl}(V)$ and $\rho_2:\mfk\to\mathfrak{gl}(W)$ be complex representations of a Lie algebra $\mfk$. Let $Z\in\mfk$ be a non-zero vector. Suppose there exists $\lambda_V,\lambda_W\in\mbc$ such that 
		\[\rho_1(Z)=\lambda_V\,id_V,\quad \rho_2(Z)=\lambda_W\,id_W.\]
		If $\lambda_V\neq\lambda_W$, then every intertwining operator $\psi:V\to W$ must be zero.
		\label{psi=0}
	\end{lemma}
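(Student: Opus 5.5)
The argument is to evaluate the intertwining relation on the single element $Z$ and compare the two scalars it produces. Recall that $\psi:V\to W$ being an intertwining operator means
\[
\psi\circ\rho_1(\kappa)=\rho_2(\kappa)\circ\psi \quad\text{for all } \kappa\in\mfk.
\]

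First, I specialise this relation to $\kappa=Z$. The hypothesis $\rho_1(Z)=\lambda_V\,id_V$ turns the left-hand side into $\lambda_V\psi$. The hypothesis $\rho_2(Z)=\lambda_W\,id_W$ turns the right-hand side into $\lambda_W\psi$. Since $\psi$ is complex linear, scalars pass through it.

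This gives $\lambda_V\psi(v)=\lambda_W\psi(v)$ for every $v\in V$, that is,
\[
(\lambda_V-\lambda_W)\,\psi(v)=0 .
\]
Because $\lambda_V\neq\lambda_W$, the scalar $\lambda_V-\lambda_W$ is a nonzero complex number, so we may divide by it. Hence $\psi(v)=0$ for all $v$, and so $\psi=0$.

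There is no real obstacle. The only points to check are that $\psi$ is complex linear, so that it commutes with scalar multiplication, and that the intertwining relation is being applied at the specific element $Z$. The assumption $Z\neq 0$ is not needed for the argument; it only makes the hypothesis non-vacuous. In the intended application to $\mbcp^n$, the substantive work lies elsewhere. It consists of choosing $Z$ in the centre of $\mfk_\mbc$, namely the element generating the $\mathfrak{u}(1)$-factor, and computing that its eigenvalues on $\mfm^{0,1}$ and on $\mfh$ under $\rho_1$ and $\rho_2$ differ. Lemma \ref{center: representations are scalars} guarantees that these actions are scalars on irreducible pieces.
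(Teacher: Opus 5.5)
Your proof is correct and is essentially the paper's own argument: both apply the intertwining relation at $Z$ to get $\lambda_V\psi(v)=\lambda_W\psi(v)$ for every $v\in V$, and then conclude $\psi=0$ from $\lambda_V\neq\lambda_W$.
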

	
	\begin{proof}
		Let $v\in V$. By the assumptions and the intertwining property of $\psi$, we see that 
		\[\lambda_V \psi(v)=\psi(\rho_1(Z)v)=\rho_2(Z)\psi(v)=\lambda_W\psi(v).\]
		Since $\lambda_V\neq\lambda_W$, the map $\psi$ has to be zero.
	\end{proof}
	
	\subsubsection*{The Two Representations of $\mfk_\mbc$}
	We now compute the representations $\rho_1$ and $\rho_2$ of $\mfk_\mbc=\mathfrak{s}(\mathfrak{u}(1)\oplus \mathfrak{u}(n))$. Let \mbox{$\kappa=
	\begin{pmatrix}
		z & 0\\
		0 & A
	\end{pmatrix}$} be an element of $\mfk_\mbc$, where $z\in \mbc, A\in M_n(\mbc)$ and $a+\on{Tr}(A)=0$.

	To compute the first representation $\rho_1:\mfk_\mbc\to\mathfrak{gl}(\mfm^{0,1})$, let $\xi=\begin{pmatrix}
		0&u^T\\
		0 & 0
	\end{pmatrix}\in \mfm^{0,1}$. Then $\rho_1$ is given by
	\begin{align*}
		\rho_1(\kappa)(\xi)
		&=[\kappa,\xi]\\
		&=
		\begin{pmatrix}
			0 & u^T(aI_n-A)\\
			0 & 0
		\end{pmatrix}.
	\end{align*}
	 
	 Before computing the second representation $\rho_2:\mfk_\mbc\to\mathfrak{gl}(\mfh)=\mathfrak{gl}(M_n(\mbc)))$, we need to find the map $\lambda_{*,e}:\mfk\to\mfh$. Recall that an element of the frame bundle $P$ can be viewed as a complex linear isomorphism from $\mbc^n$ to $T_{x_0}M$, and the action of $H=GL(n,\mbc)$ on $P$ is composition of maps from the right. We define a canonical identification $\iota: \mbc^n \xrightarrow{\sim} \mfm$ by 
	\[\iota(z) = \begin{pmatrix} 0 & -\bar{z}^T \\ z & 0 \end{pmatrix},\]
	and set our reference frame to $p_0 := r_{x_0*, e} \circ \iota\in P_{x_0}$. 
	
	By the definition of the isotropy representation $\lambda: K \to H$, the left action of $k \in K$ on $p_0$ equals the right principal action by $\lambda(k)$:
	\[L_{k*,x_0} \circ p_0 = p_0 \circ \lambda(k).\]
	Since $L_k(r_{x_0}(\exp(tX))) = r_{x_0}(\exp(t \on{Ad}_k X))$ for $X \in \mfm$, taking the derivative at $t=0$ yields $L_{k*, x_0} \circ r_{x_0*, e} = r_{x_0*, e} \circ \on{Ad}_k|_\mfm$. Substituting this into the frame equation yields $\lambda(k) = \iota^{-1} \circ \on{Ad}_k|_\mfm \circ \iota$.  The induced Lie algebra homomorphism $\lambda_*: \mfk \to \mfh$ is hence:
	\[\lambda_*(\kappa) = \iota^{-1} \circ \on{ad}_\kappa|_\mfm \circ \iota, \quad \kappa \in \mfk.\]
	
	We compute this explicitly for $\mbcp^n$. Let $\kappa = \begin{pmatrix} ia & 0 \\ 0 & A \end{pmatrix} \in \mfk$ and $X = \iota(z) \in \mfm$. The matrix commutator is:
	\begin{align*}
		[\kappa, \iota(z)] &= \begin{pmatrix} ia & 0 \\ 0 & A \end{pmatrix} \begin{pmatrix} 0 & -\bar{z}^T \\ z & 0 \end{pmatrix} - \begin{pmatrix} 0 & -\bar{z}^T \\ z & 0 \end{pmatrix} \begin{pmatrix} ia & 0 \\ 0 & A \end{pmatrix} \\
		&= \begin{pmatrix} 0 & -\bar{z}^T A - ia\bar{z}^T \\ (A - iaI_n)z & 0 \end{pmatrix}.
	\end{align*}
	Since $A \in \mathfrak{u}(n)$, we have $A^* = -A$, which implies $\bar{z}^T A = -\overline{(Az)}^T$. Thus, the resulting matrix is exactly $\iota((A - iaI_n)z)$. Consequently, the map $\lambda_*: \mfk \to \mathfrak{gl}(n,\mbc)$ is given by:
	\[\lambda_*\begin{pmatrix} ia & 0 \\ 0 & A \end{pmatrix} = A - iaI_n.\]
	
	Consequently, the representation $\rho_2:\mfk_\mbc\to\mathfrak{gl}(M_n(\mbc))$ is
	\[
		\rho_2\begin{pmatrix} z & 0 \\ 0 & A\end{pmatrix}(B)=[A-zI_n,B]=\on{ad}_A(B), \quad B\in M_n(C).
	\]
	
	\subsubsection*{Uniqueness of Invariant Integrable Complexions on $P$}
	Let $Z=\begin{pmatrix}
		-n & 0\\
		0 & I_n
		\end{pmatrix}\in \mfk_\mbc$. We will show that $\rho_1(Z)$ and $\rho_2(Z)$ both act by scalar multiplication.
		
		Let $\begin{pmatrix}
			0 & u^T\\
			0 & 0
		\end{pmatrix}\in \mfm^{0,1}$. Then 
		\[\rho_1(Z)
		\begin{pmatrix}
			0 & u^T\\
			0 & 0
		\end{pmatrix}
		=-(n+1)
		\begin{pmatrix}
			0 & u^T\\
			0 & 0
		\end{pmatrix}.
		\]
	For $B\in M_n(\mbc)$, 
	\[\rho_2(Z)(B)=\on{ad}_{I_n}(B)=0.\]
	
	\begin{remark}
		The element $Z$ is in fact in the center $Z(\mfk_\mbc)$ of $\mfk_\mbc$ (cf. Lemma \ref{center: representations are scalars}).
	\end{remark}
	
	Therefore, by Lemma \ref{psi=0}, any intertwining operator $\psi:\mfm^{1,0}\to\mfh$ has to be zero. This concludes the uniqueness of invariant integrable complexions on $P$.
	
	\begin{proposition}
		There is a unique invariant integrable complexion on the reduced frame bundle of $\mbcp^n$.
		\label{unique complexion on reduced frame bundle of CP^n}
	\end{proposition}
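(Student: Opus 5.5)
The plan is to apply Theorem \ref{classification of integrable invariant complexions} to $P$, the reduced frame bundle of $\mbcp^n$ with $G=SU(n+1)$, $K=S(U(1)\times U(n))$ and $H=GL(n,\mbc)$. Its hypotheses hold. $\mbcp^n$ is a symmetric space: take $\sigma$ to be conjugation by $\mathrm{diag}(-1,1,\dots,1)$, whose fixed group is exactly $K$. The complex structure $j$ is $G$-invariant, because $G$ acts by holomorphic maps.

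\textbf{Existence.} The canonical connection (defined via the $\Ad_K$-invariant splitting $\mfg=\mfk\oplus\mfm$) is $G$-invariant. Its induced complexion $J^0$ is therefore invariant, and it is integrable by Theorem \ref{canonical complexion is integrable}. In the correspondence $J=J^0+(\Psi(\cdot))^\#$, this $J^0$ sits at $\psi=0$.

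\textbf{Uniqueness.} It suffices to show that any $\psi$ satisfying conditions (1)--(3) of Theorem \ref{classification of integrable invariant complexions} vanishes; condition (4) then holds trivially.

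\begin{enumerate}
\item Conditions (1) and (3) force $\psi$ to vanish on $\mfk_\mbc\oplus\mfm^{1,0}$. So $\psi$ is determined by its restriction $\psi:\mfm^{0,1}\to\mfh$.
\item $K$ is connected, so condition (2) may be replaced by its infinitesimal version. Extending $\mbc$-linearly to $\mfk_\mbc$, this is exactly the statement that $\psi$ intertwines $\rho_1$ (adjoint action on $\mfm^{0,1}$) and $\rho_2$ (the action $\on{ad}\circ\lambda_*$ on $\mfh$).
\item Use the computations above with $Z=\mathrm{diag}(-n,I_n)\in\mfk_\mbc$. We have $\rho_1(Z)=-(n+1)\,\mathrm{id}$, while $\lambda_*(Z)$ is a multiple of $I_n$, so $\rho_2(Z)=0$.
\item Since $-(n+1)\neq 0$, Lemma \ref{psi=0} gives $\psi=0$.
\end{enumerate}

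\textbf{Main obstacle.} The delicate step is justifying the complexified intertwining condition and the complex-linear extension of $\lambda_*$. Condition (2) is stated only for the real group $K$, but $Z$ is a genuinely complex element of $\mfk_\mbc$ (real multiples of $iZ$ lie in $\mfk$). I would argue as follows. Both sides of $\psi\circ\on{ad}_\kappa=\on{ad}_{\lambda_*\kappa}\circ\psi$ are real-linear in $\kappa$. The map $\psi$ is $\mbc$-linear from $\mfg_\mbc$ to $(\mfh,i_H)$, and $\lambda_*$ extends $\mbc$-linearly into $\mfh=\mathfrak{gl}(n,\mbc)$. Hence the identity extends from $\kappa\in\mfk$ to all of $\mfk_\mbc$.

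It is also cleaner to avoid complexification altogether. Take $\kappa=iZ\in\mfk$ directly. Then $\lambda_*(iZ)=iI_n - i(-n)\cdot$ corrections, which is a scalar matrix, so $\on{ad}_{\lambda_*(iZ)}=0$ on $\mfh$. Meanwhile $\on{ad}_{iZ}$ acts on $\mfm^{0,1}$ by the nonzero scalar $-(n+1)i$. This yields $\psi=0$ immediately. I would double-check the sign and normalization of $\lambda_*$ (the formula $\lambda_*\kappa=A-iaI_n$) only to confirm that $\lambda_*(Z)$ is scalar, since that is all the argument needs.
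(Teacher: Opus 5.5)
Your proposal is correct and follows the paper's own argument. Conditions (1) and (3) reduce $\psi$ to a map $\mfm^{0,1}\to\mfh$, condition (2) makes it an intertwiner between $\rho_1$ and $\rho_2$, and the central element $Z=\mathrm{diag}(-n,I_n)$, with $\rho_1(Z)=-(n+1)\,\mathrm{id}$ and $\rho_2(Z)=0$, forces $\psi=0$ by Lemma \ref{psi=0}. Your explicit existence step (the canonical complexion sits at $\psi=0$ and is integrable) and your justification of the complexified intertwining condition fill in points the paper leaves implicit. In your shortcut with $iZ\in\mfk$, the garbled expression should read $\lambda_*(iZ)=(n+1)i\,I_n$, which is indeed scalar.
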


\bibliographystyle{alpha}
\bibliography{refs}

\end{document}